\newcommand{\Z}{\mathbb{Z}}
\newcommand{\R}{\mathbb{R}}
\newcommand{\C}{\mathbb{C}}
\newcommand{\F}{\mathbb{F}}
\newcommand{\gerg}{\mathfrak{g}}
\newcommand{\gergl}{\mathfrak{gl}}
\newcommand{\calB}{\mathcal{B}}
\newcommand{\Cstar}{\C^*}
\newcommand\Hom{\mathop{\mathrm{Hom}}\nolimits}
\newcommand\Ann{\mathop{\mathrm{Ann}}\nolimits}
\newcommand\Ker{\mathop{\mathrm{Ker}}\nolimits}
\newcommand\rank{\mathop{\mathrm{rank}}\nolimits}
\def\fr<#1/#2>{\frac{#1} {#2}}
\newcommand{\tr}{\operatorname{tr}}
\newcommand{\Sym}{\operatorname{Sym}}
\newtheorem{thm}{Dont use this}[section]
\newtheorem{theorem}[thm]{Theorem}
\newtheorem{proposition}[thm]{Proposition}
\newtheorem{corollary}[thm]{Corollary}
\newtheorem{lemma}[thm]{Lemma}
\newtheorem*{definition*}{Definition}
\newtheorem{definition and lemma}[thm]{Definition $\&$ Lemma}
\newtheorem*{remark*}{Remark}
\theoremstyle{definition}
\newtheorem{definition}[thm]{Definition}
\newtheorem{example}[thm]{Example}
\newtheorem{obs}[thm]{Key Observation}
\newtheorem{remark}[thm]{Remark}
\theoremstyle{definition}
\title{% 
Strict log-concavity of the Kirchhoff polynomial and its applications to the strong Lefschetz property%% 
}
\author[T. Nagaoka]{Takahiro Nagaoka}
\address[Takahiro Nagaoka]{Department of Mathematics,
	Graduate School of Science,
	Kyoto University,
	Kyoto, 606-8522, Japan}
\email{tnagaoka@math.kyoto-u.ac.jp}
\author[A. Yazawa]{Akiko Yazawa}
\address[Akiko Yazawa]{Department of Science and Technology,
	Graduate School of Medicine, Science and Technology,
	Shinshu University,
	Matsumoto, Nagano, 390-8621, Japan}
\email{yazawa@math.shinshu-u.ac.jp}
\subjclass[2010]{05C31, %Graph polynomial
11S90, %Prehomo
05B35, %Matroid
13E10 %Artin
}
\date{}
\keywords{The strong Lefschetz property, Kirchhoff polynomials, strict log-concavity, Gorenstein algebras, graph theory, prehomogeneous vector spaces}
\begin{document}
%\doublespacing
%%%% BEGINNING OF maketitle and pagvestyle
\pagestyle{plain}
%\thispagestyle{empty} 
%%%% END OF maketitle and maketitle
%%%%%%%%%%%%%%%%%%%%%%%%%%%%%%%%%%%%%%%%%%%%%%%%%
\begin{abstract}
Anari, Gharan, and Vinzant proved (complete) log-concavity of the basis generating functions for all matroids. 
From the viewpoint of combinatorial Hodge theory, 
it is natural to ask whether these functions are ``strictly" log-concave for simple matroids. 
In this paper, we show this strictness for simple graphic matroids, 
that is, we show that Kirchhoff polynomials of simple graphs are strictly log-concave. 
Our key observation is that the Kirchhoff polynomial {of a complete graph} can be seen as the (irreducible) relative invariant of a certain prehomogeneous vector space, 
which may be independently interesting in its own right. 
Furthermore, we prove that for any $a_i\in\R_{>0}$, $a_1x_1+\cdots+a_nx_n\in R^1_{M}$ satisfies the strong Lefschetz property 
(moreover, Hodge--Riemann bilinear relation) at degree one of the Artinian Gorenstein algebra $R^*_M$ associated to a graphic matroid $M$, 
which is defined by Maeno and Numata for all matroids. 
\end{abstract} 
\maketitle
%%%%%%%%%%%%%%%%%%%%%%%%%%%%%%%%%%%%%%%%%%%%%%%%%
\setcounter{tocdepth}{1}
\tableofcontents

%%%%%%%%
\section{Introduction}\label{sec:Intro}
The Kirchhoff polynomial $F_\Gamma$ of a graph $\Gamma=(V, E)$ is a multi-affine homogeneous polynomial of degree $r$ in $n$ variables, where $n=|E|$ and $r=|V|-1$. 
Such polynomials are important in several areas of study such as network theory and physics (where these polynomials are related to Feynman diagrams). 
Also, the Kirchhoff polynomial can be seen as a special case of the basis generating function $F_M$ for a graphic matroid $M$. 
The properties of the basis generating function, for example the half-plane property, have been extensively studied in \cite{MR2037144}. 
Recently, in \cite{AGV2018}, Anari, Gharan, and Vinzant showed that $F_M$ satisfies log-concavity (more precisely, complete log-concavity) on $\R_{\geq0}^n$. 
In other words, they show that $\log F_M$ is concave on $\R^n_{\geq0}$, 
that is the Hessian matrix $H_{F_M}$ and the gradient vector $\nabla F_M$ of $F_M$ satisfy 
\begin{equation*}\tag{*}
\left.\left(-F_MH_{F_M}+\nabla F_M(\nabla F_M)^T\right)\right|_{\bm{x}=\bm{a}} \ \text{is positive semi-definite} 
\end{equation*} 
for any $\bm{a}\in\R_{\geq0}^n$. 
Their proof is based on the combinatorial Hodge theory developed in \cite{MR3862944} and \cite{MR3733101}. 
As will be explained later in this introduction, if (*) is ``positive definite", then a certain Hodge--Riemann bilinear form is non-degenerate. 
Thus, from the view point of the combinatorial Hodge theory, it is important to know whether or not the basis generating function is strictly log-concave on $(\R_{>0})^n$. 
In our main theorem, we claim that for simple graphs, this stands true for the following statements. 
\begin{theorem}[cf.\ Theorem \ref{mainresult}]\hspace{2pt}\\
For any simple graph $\Gamma$ with $r+1$ vertices and $n$ edges, the Kirchhoff polynomial $F_\Gamma$ is strictly log-concave on $(\R_{>0})^n$. In other words, for any $\bm{a}\in(\R_{>0})^n$, $\log F_\Gamma$ is strictly concave at $\bm{a}$, that is,
\[(-F_\Gamma H_{F_\Gamma}+\nabla F_{\Gamma}(\nabla F_\Gamma)^T)|_{\bm{x}=\bm{a}} \ \text{is positive definite}.\]
In particular, $H_{F_\Gamma}|_{\bm{x}=\bm{a}}$ is non-degenerate, with $n-1$ negative eigenvalues and one positive eigenvalue. 
Thus, $(-1)^{n-1}(\det H_{F_\Gamma})|_{\bm{x}=\bm{a}}>0$.
\end{theorem}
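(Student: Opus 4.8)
\medskip
\noindent\textbf{Proof strategy.}\ The plan is to realize $F_\Gamma$ as a linear pullback of the determinant of a symmetric matrix, and to deduce strict log-concavity from the classical strict concavity of $\log\det$ on the cone of positive definite symmetric matrices, with the hypothesis ``$\Gamma$ simple'' entering at exactly one crucial place. Throughout I may assume $\Gamma$ is connected (otherwise $F_\Gamma\equiv 0$ and there is nothing to prove); for the statement on the signature of $H_{F_\Gamma}$ I also assume $r\ge 2$, i.e.\ $\Gamma\neq K_2$.

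First I would set up the linear model. Fix a vertex $w$ of $\Gamma$ and let $\phi_\Gamma\colon\R^{n}=\R^{E(\Gamma)}\to\Sym^2\big(\R^{V(\Gamma)\setminus\{w\}}\big)$ be the linear map sending an edge-weight vector $\bm y$ to the reduced weighted Laplacian $L_{\widehat{w}}(\bm y)$. By the weighted matrix--tree theorem, $F_\Gamma(\bm y)=\det\phi_\Gamma(\bm y)$. Two properties of $\phi_\Gamma$ are decisive. First, for $\Gamma$ connected and $\bm a\in(\R_{>0})^{n}$ the matrix $\phi_\Gamma(\bm a)$ is positive definite, since the Laplacian of a connected positively weighted graph is positive semidefinite with one-dimensional kernel spanned by the all-ones vector, so that deleting the row and column indexed by $w$ produces a positive definite matrix. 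Second, $\phi_\Gamma$ is injective if and only if $\Gamma$ is simple: if $\Gamma$ is simple, the weight of each edge inside $V(\Gamma)\setminus\{w\}$ is recovered from the corresponding off-diagonal entry of $L_{\widehat{w}}(\bm y)$, and then the weight of each edge at $w$ from the corresponding diagonal entry; conversely, a loop contributes $0$ to $L_{\widehat{w}}$ and two parallel edges enter only through the sum of their weights, so $\ker\phi_\Gamma\neq 0$ in either case. In the special case $\Gamma=K_{r+1}$ the map $\phi_{K_{r+1}}$ is a linear isomorphism onto $\Sym^2$, so that, up to a linear change of coordinates, $F_{K_{r+1}}$ is the determinant on $\Sym^2$, i.e.\ the irreducible relative invariant of the prehomogeneous vector space on which $\GL_r$ acts by $Q\mapsto gQg^{T}$; this is the observation announced in the introduction.

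Next I would deduce strict log-concavity. Recall that $\log\det$ is strictly concave on the open cone of positive definite symmetric matrices: along a line $A+tB$ with $A$ positive definite, the second derivative at $t=0$ is $-\tr\big((A^{-1}B)^{2}\big)$, negative unless $B=0$; equivalently, $H_{\log\det}(A)$ is negative definite for $A$ positive definite. Since $\phi_\Gamma$ is linear and injective and carries $(\R_{>0})^{n}$ into the positive definite cone, the chain rule gives $H_{\log F_\Gamma}(\bm a)=\phi_\Gamma^{T}\,H_{\log\det}\big(\phi_\Gamma(\bm a)\big)\,\phi_\Gamma$, which is negative definite for every $\bm a\in(\R_{>0})^{n}$; thus $\log F_\Gamma$ is strictly concave on $(\R_{>0})^{n}$. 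Finally, a direct computation yields the identity $-F_\Gamma H_{F_\Gamma}+\nabla F_\Gamma(\nabla F_\Gamma)^{T}=F_\Gamma^{2}\cdot\big(-H_{\log F_\Gamma}\big)$, and since $F_\Gamma(\bm a)>0$ the left-hand side is positive definite at $\bm a$, which is the main assertion. (Alternatively one may invoke the Anari--Gharan--Vinzant inequality (*) for positive semidefiniteness and use the injectivity of $\phi_\Gamma$ only to exclude degeneracy.)

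Finally I would read off the signature of $H_{F_\Gamma}$. Setting $M:=-F_\Gamma H_{F_\Gamma}+\nabla F_\Gamma(\nabla F_\Gamma)^{T}$, which is positive definite at $\bm a$, we have $-F_\Gamma H_{F_\Gamma}=M-\nabla F_\Gamma(\nabla F_\Gamma)^{T}$, a positive definite matrix minus a rank-one positive semidefinite one; restricting the quadratic form $\bm v\mapsto \bm v^{T}(-F_\Gamma H_{F_\Gamma})\bm v$ to the hyperplane $(\nabla F_\Gamma)^{\perp}$ yields a positive definite form, so by the min--max characterization $-F_\Gamma H_{F_\Gamma}$ has at least $n-1$ positive eigenvalues. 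On the other hand Euler's relation gives $\bm a^{T}H_{F_\Gamma}\bm a=r(r-1)F_\Gamma(\bm a)>0$ for $r\ge 2$, so $H_{F_\Gamma}$ has at least one positive eigenvalue, i.e.\ $-F_\Gamma H_{F_\Gamma}$ at least one negative eigenvalue (since $F_\Gamma(\bm a)>0$). Hence $H_{F_\Gamma}|_{\bm x=\bm a}$ is non-degenerate with exactly one positive and exactly $n-1$ negative eigenvalues, whence $(-1)^{n-1}\det H_{F_\Gamma}|_{\bm x=\bm a}>0$. The main obstacle is the second property of $\phi_\Gamma$ above --- identifying simplicity of $\Gamma$ as precisely the condition under which $\phi_\Gamma$ is injective, so that strict concavity is inherited by the pullback; everything else is routine once the matrix--tree presentation, the concavity of $\log\det$, and Euler's identities are in hand.
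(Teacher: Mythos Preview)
Your proof is correct and takes a genuinely different, more elementary route than the paper.

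The paper proceeds in two stages. First it reduces the general simple graph to the complete graph $K_{r+1}$: every simple $\Gamma$ on $r+1$ vertices is obtained from $K_{r+1}$ by deleting edges, and a technical deletion lemma (Lemma~\ref{cor:equivalent}/Corollary~\ref{cor:van}) shows that strict (homogeneous) log-concavity passes from $F_{K_{r+1}}$ to $F_\Gamma$. Second, for $K_{r+1}$ the paper identifies $F_{K_{r+1}}$ with the determinant on $\Sym(r,\C)$, views the latter as the relative invariant of the irreducible prehomogeneous vector space $(GL_r,\rho,\Sym(r,\C))$, and invokes the general fact (Corollary~\ref{cor:reg}) that the Hessian of such a relative invariant is a constant times a power of the invariant; together with Yazawa's evaluation at the all-ones point this yields the exact identity $\det H_{F_{K_{r+1}}}=(-1)^{N-1}c_r\,F_{K_{r+1}}^{\,N-r-1}$. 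The paper then combines this non-vanishing with the Anari--Gharan--Vinzant semi-definiteness and the determinantal identity (Proposition~\ref{identity1}) to obtain strictness, and uses Cauchy's interlacing theorem for the signature.

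You bypass both the reduction to $K_{r+1}$ and the prehomogeneous machinery. Your argument works directly for any simple $\Gamma$: the reduced Laplacian map $\phi_\Gamma$ is linear, carries $(\R_{>0})^n$ into the positive-definite cone, and---this is where simplicity enters---is injective; hence strict concavity of $\log\det$ on that cone pulls back to strict concavity of $\log F_\Gamma$. This avoids the AGV inequality altogether and needs no input beyond the Matrix--Tree theorem and the classical Hessian computation for $\log\det$. Your signature argument (rank-one perturbation plus Euler's identity) is likewise self-contained, replacing the paper's appeal to Cauchy interlacing. What the paper's longer route buys is the explicit determinantal identity of Theorem~\ref{maintheorem2} and the conceptual link to prehomogeneous vector spaces, both of independent interest; what your route buys is a short, elementary proof that makes transparent exactly why ``simple'' is the right hypothesis. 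Note also that your argument extends verbatim to the larger regions $C^n_{T>0}$ of Theorem~\ref{mainresult}: if the edges of a spanning tree $T$ carry positive weights then the weighted subgraph is already connected, so $\phi_\Gamma(\bm a)$ is positive definite even when the remaining coordinates of $\bm a$ vanish.
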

The proof of our main theorem comprises two steps. 
First, we reduce our claim to the following  determinantal identity of the Hessian of the Kirchhoff polynomial $F_{K_{r+1}}$ of complete graphs $K_{r+1}$ (cf.\ Theorem \ref{maintheorem2}).  
\begin{equation*}%\tag{**}
\det H_{F_{K_{r+1}}}=(-1)^{N-1}c_r(F_{K_{r+1}})^{N-r-1},
\end{equation*}
where $c_r>0$ is a constant, and $N:=\binom{r+1}{2}$. 
Second, we show the above equality not by directly computing but rather by identifying $F_{K_{r+1}}$ 
with the unique irreducible polynomial associated to a special $GL_r(\C)$ representation 
or the so-called prehomogeneous vector space. 
Then, based on the general theory of prehomogeneous vector spaces \cite{MR0430336}, 
the Hessian $\det H_F$ of the relative invariant $F$ is also a relative invariant of the same representation.  
Hence we have 
\[\exists c \in \C \ \text{such that} \ \det H_F=cF^m\]
by the uniqueness of the relative invariant. 
We believe that this method may be useful for proving some (conjectural) Hessian identity in general. 

Recent studies by Br\"{a}nd\'{e}n and Huh revealed that
the Hessian of a nonzero Lorentzian polynomial has exactly one positive eigenvalue at any point on the positive orthant (see \cite[Sections 5 and 7]{BH2019}). 

\medskip

In Section \ref{sec:app1}, we give some applications of the main theorem to the strong Lefschetz property of the graded Artinian Gorenstein algebra $R_\Gamma^*=\oplus_{\ell=0}^r{R_\Gamma^\ell}=\R[x_1, \ldots, x_n]/\Ann(F_\Gamma)$ associated to any simple graph $\Gamma$ (see Definition \ref{def:algebra}). 
This algebra is defined for any matroid $M$ by Maeno and Numata who proved that this algebra has the strong Lefschetz property at all degrees when $M$ is the projective space $M(q, n)$ over a finite field (they denote $R_M^*$ by $A_M$) in \cite{MR3566530}.  
In an extended abstract \cite{MR2985388} of the paper \cite{MR3566530}, 
Maeno and Numata also conjectured that $R^*_M$ has the strong Lefschetz property for any matroid $M$.  
As an application of our main theorem, we prove that this conjecture at degree one when $M$ is a graphic matroid, with the following. 
\begin{theorem}[cf.\ Theorem \ref{cor:SLPgraph}]\hspace{2pt}\\
For any simple graph $\Gamma$ with $r+1$ vertices and $n$ edges, 
and any $\bm{a}=(a_1, \ldots, a_n)\in(\R_{>0})^n$, $L_{\bm{a}}:=a_1x_1+\cdots+a_nx_n\in R_{\Gamma}^1$ satisfies the strong Lefschetz property at degree one, 
that is, the multiplication map  
\[\times L_{\bm{a}}^{r-2} : R_\Gamma^1 \to R_\Gamma^{r-1}\]
is an isomorphism. 
\end{theorem}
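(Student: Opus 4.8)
The plan is to deduce the strong Lefschetz property at degree one from the non-degeneracy of the Hessian established in the main theorem (Theorem \ref{mainresult}), via the standard bilinear-form reformulation for Artinian Gorenstein algebras. Recall that $R_\Gamma^* = \R[x_1,\dots,x_n]/\Ann(F_\Gamma)$ is Gorenstein with socle degree $r$, so Macaulay--Matlis duality gives a perfect pairing $R_\Gamma^\ell \times R_\Gamma^{r-\ell} \to \R$. Concretely, an element $P(\partial_1,\dots,\partial_n) \in R_\Gamma^\ell$ acts on $F_\Gamma$ by differentiation, and the pairing is $(P, Q) \mapsto (PQ)(F_\Gamma)$, which is a nonzero constant precisely because $PQ$ has degree $r = \deg F_\Gamma$. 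The multiplication map $\times L_{\bm a}^{r-2} : R_\Gamma^1 \to R_\Gamma^{r-1}$ is an isomorphism if and only if the composite bilinear form
\[
R_\Gamma^1 \times R_\Gamma^1 \longrightarrow \R, \qquad (P, Q) \longmapsto \big(P \cdot Q \cdot L_{\bm a}^{r-2}\big)(F_\Gamma)
\]
is non-degenerate, since target and source have equal dimension $n$ (here I use that $\Gamma$ is simple, so the degree-one part $R_\Gamma^1$ has dimension exactly $n$ and is spanned by the classes of $x_1,\dots,x_n$).

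The next step is to identify this bilinear form, up to a nonzero scalar, with the Hessian of $F_\Gamma$ evaluated at $\bm a$. Since $F_\Gamma$ is multi-affine of degree $r$, one has by Euler-type identities $(x_i x_j L_{\bm a}^{r-2})(F_\Gamma) = c \cdot \partial_i \partial_j F_\Gamma(\bm a)$ for a universal positive constant $c$ depending only on $r$ (this is the usual ``polarization'' computation: applying $L_{\bm a}^{r-2} = (\sum a_k \partial_k)^{r-2}$ to the quadratic form $\partial_i\partial_j F_\Gamma$, which is a homogeneous polynomial of degree $r-2$, evaluates it at $\bm a$ up to the factorial factor $(r-2)!$). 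Hence the Gram matrix of the form in the basis $x_1,\dots,x_n$ is, up to the scalar $c>0$, exactly $H_{F_\Gamma}|_{\bm x = \bm a}$. By Theorem \ref{mainresult}, $H_{F_\Gamma}|_{\bm x = \bm a}$ is non-degenerate (indeed with one positive and $n-1$ negative eigenvalues), so the form is non-degenerate, and therefore $\times L_{\bm a}^{r-2}$ is an isomorphism. This is precisely the strong Lefschetz property at degree one.

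The main obstacle — and the point that requires care rather than invocation — is the bookkeeping linking the ``apolarity'' pairing on $R_\Gamma^*$ to the analytic Hessian of $F_\Gamma$, in particular checking that the identification of $R_\Gamma^1$ with $\R^n$ via $x_i \mapsto e_i$ is well-defined (no nonzero linear form lies in $\Ann(F_\Gamma)$, which holds because $\Gamma$ simple means every edge lies in some spanning tree, so each $\partial_i F_\Gamma \neq 0$, and more strongly the $\partial_i F_\Gamma$ are linearly independent) and that the constant $c$ is genuinely nonzero. Once this dictionary is in place, the theorem is immediate from Theorem \ref{mainresult}; no further input from the prehomogeneous-vector-space machinery is needed for this particular corollary. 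I would also remark that the same argument shows the Hodge--Riemann relation at degree one, since Theorem \ref{mainresult} pins down the signature of the Hessian, not merely its non-degeneracy.
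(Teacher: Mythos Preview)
Your proposal is correct and follows essentially the same route as the paper: the paper invokes the Hessian criterion (Theorem~\ref{The Hessian criterion}, cited from Maeno--Watanabe) together with the non-degeneracy of $H_{F_\Gamma}|_{\bm{x}=\bm{a}}$ from Theorem~\ref{mainresult}, and the computation in Remark~\ref{rem:Hodge--Riemann} identifies $Q^1_{L_{\bm a}}$ with $(r-2)!\,H_{F_\Gamma}|_{\bm{x}=\bm{a}}$ exactly as you do. The only cosmetic difference is that you unpack the Hessian criterion by hand rather than citing it; note also that your worry about $\dim R_\Gamma^1=n$ is automatically resolved by the non-degeneracy of the Hessian itself (cf.\ Remark~\ref{rem:HW}), so a separate linear-independence check is not needed.
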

Since the Hodge--Riemann bilinear form (see Definition \ref{HRR}) of $R^1_\Gamma$ is given by the Hessian $H_{F_\Gamma}$, we have the following stronger application. 
\begin{theorem}[cf.\ Theorem \ref{cor:HRRgraph}]\hspace{2pt}\\
In the above setting, for any $\bm{a}\in(\R_{>0})^n$, the Hodge--Riemann bilinear form
\[{Q_{L_{\bm{a}}}^1} : R_\Gamma^1\times R_\Gamma^1 \to \R, \ \ \ (\xi_1, \xi_2) \to {[\xi_1 L_{\bm{a}}^{r-2}\xi_2]}\]
is non-degenerate, where $[-] :R_\Gamma^r\xrightarrow{\sim} \R$ is the isomorphism as 
\begin{align*}
P \mapsto P\left(\fr<\partial/\partial x_1>, \ldots, \fr<\partial/\partial x_n>\right)F_\Gamma. 
\end{align*}
Moreover, ${Q_{L_{\bm{a}}}^1}$ has $n-1$ negative eigenvalues and one positive eigenvalue. 
\end{theorem}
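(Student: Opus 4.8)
The plan is to deduce the statement directly from Theorem \ref{mainresult} (equivalently, from the statement that $H_{F_\Gamma}|_{\bm{x}=\bm{a}}$ is non-degenerate with $n-1$ negative and one positive eigenvalue), via the standard identification of the Hodge--Riemann form on the degree-one piece of an Artinian Gorenstein algebra with a Hessian. We may assume $r\geq 2$, since otherwise the map $\times L_{\bm{a}}^{r-2}$ is not defined. Throughout, write $\partial_i:=\partial/\partial x_i$ and $D_{\bm{a}}:=\sum_{k=1}^{n}a_k\partial_k$ for the directional-derivative operator in the direction $\bm{a}$.

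First I would check that the images of $x_1,\dots,x_n$ in $R^1_\Gamma$ form a basis, so that the eigenvalue count in the statement is meaningful. A linear form $\sum_i c_ix_i$ lies in $\Ann(F_\Gamma)$ exactly when $\sum_i c_i\,\partial_iF_\Gamma=0$ as a polynomial; applying $\partial_j$ to this relation and evaluating at $\bm{a}$ gives $H_{F_\Gamma}|_{\bm{x}=\bm{a}}\cdot(c_1,\dots,c_n)^T=0$, whence $c_1=\dots=c_n=0$ by non-degeneracy of the Hessian. Thus $\Ann(F_\Gamma)_1=0$ and $\dim_\R R^1_\Gamma=n$.

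Next I would compute the Gram matrix of $Q^1_{L_{\bm{a}}}$ in this basis. Since the operators $\partial_1,\dots,\partial_n$ commute, for all $i,j$ we have
\[
Q^1_{L_{\bm{a}}}(x_i,x_j)=\bigl[\,x_ix_jL_{\bm{a}}^{r-2}\,\bigr]=\partial_i\partial_j\,D_{\bm{a}}^{\,r-2}F_\Gamma=D_{\bm{a}}^{\,r-2}\,(H_{F_\Gamma})_{ij}.
\]
Now $(H_{F_\Gamma})_{ij}=\partial_i\partial_jF_\Gamma$ is homogeneous of degree $r-2$, and for every homogeneous polynomial $G$ of degree $d$ one has $D_{\bm{a}}^{\,d}G=d!\,G(\bm{a})$ (compare the coefficients of $t^d$ on the two sides of $G(t\bm{a})=t^dG(\bm{a})$). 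Hence the Gram matrix of $Q^1_{L_{\bm{a}}}$ with respect to $\{x_1,\dots,x_n\}$ equals $(r-2)!\cdot H_{F_\Gamma}|_{\bm{x}=\bm{a}}$.

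Finally, scaling a real symmetric matrix by the positive constant $(r-2)!$ preserves its signature, so Theorem \ref{mainresult} immediately yields that $Q^1_{L_{\bm{a}}}$ is non-degenerate with exactly $n-1$ negative eigenvalues and one positive eigenvalue, as claimed. (Since Poincar\'e duality for the Gorenstein algebra $R^*_\Gamma$ identifies non-degeneracy of $Q^1_{L_{\bm{a}}}$ with bijectivity of $\times L_{\bm{a}}^{r-2}:R^1_\Gamma\to R^{r-1}_\Gamma$, this argument also reproves and refines Theorem \ref{cor:SLPgraph}.) In this argument all the real difficulty has already been absorbed into Theorem \ref{mainresult}, and ultimately into the Hessian determinant identity for complete graphs (Theorem \ref{maintheorem2}) proved via prehomogeneous vector spaces; the only points requiring (routine) care are that $\{x_i\}$ is a genuine basis of $R^1_\Gamma$ and the homogeneity identity converting the differential-operator pairing into the Hessian evaluated at $\bm{a}$.
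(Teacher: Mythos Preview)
Your proposal is correct and follows essentially the same approach as the paper: the paper's Remark~\ref{rem:Hodge--Riemann} carries out exactly your Gram-matrix computation $Q^1_{L_{\bm{a}}}(x_i,x_j)=(r-2)!\,\partial_i\partial_jF_\Gamma|_{\bm{x}=\bm{a}}$, and the basis property of $\{x_1,\dots,x_n\}$ in $R^1_\Gamma$ via non-degeneracy of the Hessian is noted (without the details you supply) in Remark~\ref{rem:HW}. Your write-up is slightly more explicit on both points, but the logical route from Theorem~\ref{mainresult} to the conclusion is identical.
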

As we note in Remark \ref{rem:HW}, 
our ${Q_{L_{\bm{a}}}^1}$ is the same as the Hodge--Riemann bilinear form on the degree one part of another algebra $B^*(M)$ when $M=\Gamma$, which is studied in \cite{MR3733101}. 
In \cite[Remark 15]{MR3733101}, Huh and Wang considered the Hodge--Riemann bilinear form on $B^1(M)$ for a general simple matroid. 
Our corollary then implies the same conclusion for $B^*(M)$ at degree one as the above theorem (in general, there exists a natural surjection $B^*(M)\twoheadrightarrow R^*_M$).

This paper is organized as follows. In Section \ref{sec:Homogeneous polynomials}, 
we study the properties of homogeneous polynomials in terms of their Hessian and log-concavity.  
In particular, we collect some propositions on prehomogeneous vector spaces in Subsection \ref{subsec:Prehomogeneous vector spaces}. 
In Section \ref{sec:Matroid}, 
we see several definitions and propositions for matroids. 
In Section \ref{sec:Main result}, 
we define the Kirchhoff polynomials of simple graphs, and then prove our main result. 
In the last half of this section, 
we see that the connection between the Kirchhoff polynomials of complete graphs and certain prehomogeneous vector spaces. 
Finally, in Section \ref{sec:Applications}, 
we conclude that our main result gives applications to algebras associated to graphic matroids. 
%%%%%%%%%%%%%%%%%%%%%%%%%%%%%%%%%%%%%%%%%%%%%%%%%%%%
%\ref{sec:Intro}
%\ref{sec:Homogeneous polynomials}
%\ref{sec:Matroid}
%\ref{sec:Main result}
%\ref{sec:Applications}
%%%%%%%%%%%%%%%%%%%%%%%%%%%%%%%%%%%%%%%%%%%%%%%%%%%%
\medskip

\noindent\textbf{Acknowledgements.}
The authors wish to express their gratitude to Yasuhide Numata, who is the second author's supervisor, for suggesting this problem. They are also grateful to Hiroyuki Ochiai for offering general facts on Proposition \ref{identity1} and for helpful comments on Remark \ref{rem:completegraph}, as well as June Huh and Satoshi Murai for their helpful comments concerning Remark \ref{rem:HW}. 

%%%%%%%%
\section{Homogeneous polynomials}\label{sec:Homogeneous polynomials}

Let us consider a homogeneous polynomial $F$ of degree $r$ in $n$ variables with real coefficients, where $r\geq2$. 
For $F$, we define the Hessian matrix $H_{F}$ and the gradient vector $\nabla F$ by
\begin{align*}
H_F=\left(\fr<\partial^2 F/\partial x_i\partial x_j>\right)_{1\leq i, j\leq n}, &&
\nabla F =\left(\fr<\partial F/\partial x_1>, \ldots, \fr<\partial F/\partial x_n>\right). 
\end{align*}
We call $\det H_{F}$ the Hessian of $F$. 

In the first half of this section, we consider the Hessian of $F$.  
First, we see the following identity
\begin{align}\label{The Hessian of a homogeneous polynomial}
\det\left(-FH_{F}+s(\nabla F)^{T}\cdot \nabla F\right)=(-1)^{n-1}\frac{r}{r-1}\left(s-\frac{r-1}{r}\right)F^n\det H_F. 
\end{align}

Next, for a special polynomial $F$, we show the following identity 
\begin{align}\label{The Hessian in the case of the relative invariant}
\det H_{F}=c'F^{\frac{n(r-2)}{r}}, 
\end{align}
where $c'$ is non-zero. 

In the last half of this section, we consider the strict log-concavity of $F$. 

\subsection{The Hessians}
Here, we prove the identity \eqref{The Hessian of a homogeneous polynomial}. 
The set of all $m\times n$ matrices is denoted by $M_{m\times n}$. 
Moreover $I_{n}$ represents the $n \times n$ identity matrix. 

To prove \eqref{The Hessian of a homogeneous polynomial}, we prepared two lemmas. 

\begin{lemma}\label{lem:hom1}
For an $n \times n$ matrix $N$ of rank one, we have
\begin{align*}
\det(I_{n}-sN)=1-s(\tr N). 
\end{align*}
\end{lemma}

\begin{lemma}[Euler's identity]\label{lem:hom2}\hspace{2pt}\\
For a homogeneous polynomial $F$ of degree $r$ in $n$ variables, where $r\geq2$, we have 
\[r(r-1)F=\bm{x}^TH_F\bm{x},\]
\[(r-1)(\nabla F)^T=H_F\bm{x},\]
where $\bm{x}=(x_1, \ldots, x_n)^T$. 
\end{lemma}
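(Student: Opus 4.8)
The plan is to deduce both identities from the classical (first-order) Euler relation for homogeneous functions, applied once to $F$ itself and once to each of its first partials. First I would establish the scalar Euler identity $(\nabla F)\,\bm{x} = rF$, equivalently $\sum_{i} x_i \, \partial F/\partial x_i = rF$. This follows by differentiating the homogeneity relation $F(\lambda \bm{x}) = \lambda^r F(\bm{x})$ with respect to $\lambda$ and setting $\lambda = 1$: the chain rule gives $\sum_i x_i\,\partial F/\partial x_i = r\lambda^{r-1}F(\bm{x})$, which at $\lambda = 1$ reads $rF$.

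Next I would obtain the second claimed identity $(r-1)(\nabla F)^T = H_F \bm{x}$. The key observation is that each partial $\partial F/\partial x_j$ is itself a homogeneous polynomial, of degree $r-1$. Applying the scalar Euler relation to $\partial F/\partial x_j$ in place of $F$ yields, for each $j$,
\[
\sum_i x_i \, \frac{\partial^2 F}{\partial x_i \partial x_j} = (r-1)\,\frac{\partial F}{\partial x_j}.
\]
The left-hand side is exactly the $j$-th entry of $H_F\bm{x}$ (using the symmetry $\partial^2 F/\partial x_i \partial x_j = \partial^2 F/\partial x_j \partial x_i$), and the right-hand side is $(r-1)$ times the $j$-th component of $(\nabla F)^T$. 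Collecting these $n$ scalar equations into a single vector equation gives the desired relation.

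Finally, the first identity follows by contracting the second with $\bm{x}$. Left-multiplying $(r-1)(\nabla F)^T = H_F\bm{x}$ by $\bm{x}^T$ gives $(r-1)\,\bm{x}^T(\nabla F)^T = \bm{x}^T H_F \bm{x}$. But $\bm{x}^T (\nabla F)^T = \sum_i x_i\, \partial F/\partial x_i = rF$ by the scalar Euler relation established in the first step, so $\bm{x}^T H_F \bm{x} = r(r-1)F$, as claimed.

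There is no serious obstacle here; the argument is entirely formal. The only points requiring care are establishing the base scalar identity cleanly (a one-line computation via the chain rule) and noting that each first partial of a homogeneous polynomial of degree $r$ is again homogeneous, of degree $r-1$, which licenses the second application of Euler's relation. The hypothesis $r \geq 2$ is more than sufficient, and the same manipulation is valid verbatim.
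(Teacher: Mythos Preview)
Your argument is correct and is the standard derivation of Euler's identities: establish the first-order relation $\sum_i x_i\,\partial_i F = rF$ from homogeneity, apply it again to each $\partial_j F$ (homogeneous of degree $r-1$) to obtain $H_F\bm{x}=(r-1)(\nabla F)^T$, and then contract with $\bm{x}$ to get $\bm{x}^T H_F\bm{x}=r(r-1)F$. The paper does not give its own proof of this lemma but simply refers to \cite[Corollary~4.3]{AGV2018}, so there is nothing further to compare; your write-up is exactly what one would expect that citation to contain.
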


Lemma \ref{lem:hom1} is straightforward. 
The proof of Lemma \ref{lem:hom2} is in \cite[Corollary 4.3.]{AGV2018}. 

\begin{proposition}\label{identity1}
For a homogeneous polynomial $F$ of degree $r$ in $n$ variables, where $r\geq2$, we have 
\begin{align*}
\det\left(-FH_{F}+s(\nabla F)^{T} \nabla F\right)=(-1)^{n-1}\frac{r}{r-1}\left(s-\frac{r-1}{r}\right)F^{n}\det H_F. 
\end{align*}
\end{proposition}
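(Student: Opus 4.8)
The plan is to prove the determinantal identity in Proposition \ref{identity1} by a direct linear-algebra manipulation that reduces the full $n\times n$ determinant to the rank-one situation of Lemma \ref{lem:hom1}. Write $H=H_F$ and $v=(\nabla F)^T$, so the matrix in question is $-FH + s\, v v^T$. The key point is that by Euler's identity (Lemma \ref{lem:hom2}) the vector $v$ is, up to scalar, $H\bm{x}/(r-1)$, so the rank-one perturbation $v v^T$ lies in the column space of $H$; this is exactly what makes the determinant factor through $\det H$. Concretely, I would first pull out $H$: assuming for the moment that $H$ is invertible (the generic case — the identity is polynomial in the entries of $\bm{x}$, so it suffices to prove it on the Zariski-dense open set where $\det H\neq 0$, then conclude by continuity/polynomial identity), write
\[
-FH + s\, v v^T = (-F)\,H\bigl(I_n - \tfrac{s}{F} H^{-1} v v^T\bigr).
\]
Taking determinants gives $(-F)^n (\det H)\det\bigl(I_n - \tfrac{s}{F}H^{-1}vv^T\bigr)$, and since $H^{-1}vv^T$ has rank one, Lemma \ref{lem:hom1} reduces the last factor to $1 - \tfrac{s}{F}\tr(H^{-1}vv^T) = 1 - \tfrac{s}{F}\, v^T H^{-1} v$.

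The remaining task is to evaluate $v^T H^{-1} v$. Here I would again invoke Euler: from $(r-1)v = H\bm{x}$ we get $H^{-1}v = \tfrac{1}{r-1}\bm{x}$, hence
\[
v^T H^{-1} v = \tfrac{1}{r-1}\, v^T \bm{x} = \tfrac{1}{r-1}\,\bm{x}^T v = \tfrac{1}{r-1}\cdot\tfrac{1}{r-1}\,\bm{x}^T H\bm{x} = \tfrac{1}{(r-1)^2}\cdot r(r-1)F = \tfrac{r}{r-1}F,
\]
using both parts of Lemma \ref{lem:hom2} (the first to replace $v^T$ by $\tfrac{1}{r-1}\bm{x}^TH$, the second for the quadratic form). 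Plugging this back in, the rank-one factor becomes $1 - \tfrac{s}{F}\cdot\tfrac{r}{r-1}F = 1 - \tfrac{rs}{r-1} = -\tfrac{r}{r-1}\bigl(s - \tfrac{r-1}{r}\bigr)$, and assembling everything yields
\[
\det(-FH + s\, vv^T) = (-F)^n(\det H)\cdot\Bigl(-\tfrac{r}{r-1}\Bigr)\Bigl(s - \tfrac{r-1}{r}\Bigr) = (-1)^{n-1}\tfrac{r}{r-1}\Bigl(s - \tfrac{r-1}{r}\Bigr)F^n\det H_F,
\]
which is exactly the claimed formula (the sign $(-1)^n\cdot(-1) = (-1)^{n-1}$ works out).

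The main obstacle — really the only subtlety — is the invertibility assumption used to write $H^{-1}$. I would handle this by noting that both sides of the asserted identity are polynomials in $x_1,\dots,x_n$ (and $s$) with real coefficients: the left side is a determinant of a matrix whose entries are polynomials, the right side visibly so. Since they agree on the open dense locus $\{\det H_F \neq 0\}$ (provided that locus is nonempty; if $\det H_F\equiv 0$ the identity reads $0=0$ on the nose after observing the left side also vanishes identically, which follows because then $v$ lies in a proper subspace stable under the relevant matrices — or more simply, one can run the whole argument over the field of rational functions $\R(x_1,\dots,x_n,s)$ where $H$ is generically invertible), they agree identically. Alternatively, to avoid division entirely, one can use the adjugate: multiply through by $(\det H)^{?}$ and work with $\mathrm{adj}(H)$ in place of $H^{-1}$, since $H\,\mathrm{adj}(H) = (\det H) I_n$ is an identity of polynomials; then $v^T\mathrm{adj}(H)v = \tfrac{r}{r-1}F\det H$ follows the same way from Euler, and no genericity argument is needed. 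Either route is routine; the content is entirely in the two applications of Euler's identity from Lemma \ref{lem:hom2}.
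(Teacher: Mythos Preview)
Your proof is correct and follows essentially the same strategy as the paper's: factor out $H_F$, apply the rank-one determinant formula of Lemma \ref{lem:hom1}, and evaluate the resulting trace via Euler's identities from Lemma \ref{lem:hom2}. The only cosmetic difference is that the paper first establishes the matrix identity for a formal symmetric matrix $A$ and vector $\bm{v}$ (so that $A^{-1}$ exists over the field of rational functions in those entries) and then specializes to $A=H_F$, $\bm{v}=\bm{x}$, whereas you work directly with $H_F$ and dispose of the non-invertible locus by a polynomial-identity/density argument.
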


\begin{proof}
{Let $A$ be a $n\times n$ matrix and $\bm{v}$ a column vector of size $n$, where we consider every entries of $A$ and $\bm{v}$ as formal variables. }   
We set 
\begin{align*}
N=\frac{1}{\bm{v}^{T}A\bm{v}}(A\bm{v})(A\bm{v})^{T}A^{-1}.  
\end{align*}
In this case, we have $\rank N=1$ and $\tr N=1$. 
By Lemma \ref{lem:hom1}, 
\begin{align*}
\det\left(I_{n}-s\frac{1}{\bm{v}^{T}A\bm{v}}(A\bm{v})(A\bm{v})^{T}A^{-1}\right)=1-s. 
\end{align*}
If we multiply $\det A$ from the right, we obtain 
\begin{align*}
\det\left(A-s\frac{1}{\bm{v}^{T}A\bm{v}}(A\bm{v})(A\bm{v})^{T}\right)=(1-s)\det A. 
\end{align*}
If we multiply the left hand side by $(-\bm{v}^{T}A\bm{v})^n$, we obtain 
\begin{align*}\tag{*}
\det\left(-(\bm{v}^{T}A\bm{v})A+s(A\bm{v})(A\bm{v})^{T}\right)&=(1-s)(-\bm{v}^{T}A\bm{v})^{n}\det A \\
&=(-1)^{n-1}(s-1){(\bm{v}^{T}A\bm{v})}^{n}\det A. 
\end{align*}

Using Lemma \ref{lem:hom2}, we have the following identity 
\begin{align*}
-FH_F+s(\nabla F)^T\nabla F=\fr<1/r(r-1)>\left\{-(\bm{x}^TH_F\bm{x})H_F+\fr<sr/r-1>(H_F\bm{x})(H_F\bm{x})^T\right\}.
\end{align*}
Thus, applying (*) as $A=H_{F}$ and $\bm{v}=\bm{x}$, we obtain the desired equation. 
\end{proof}

By Proposition \ref{identity1}, we obtain Identity \eqref{The Hessian of a homogeneous polynomial}. 

\subsection{Prehomogeneous vector spaces}\label{subsec:Prehomogeneous vector spaces}
Here, we prove Identity \eqref{The Hessian in the case of the relative invariant} for the relative invariant of an irreducible prehomogeneous vector space (Corollary \ref{cor:reg}). 
To prove it, 
we introduce the notion of prehomogeneous vector spaces developed by Kimura and Sato \cite{MR0430336} and many authors. To be self-contained, we obtained certain useful propositions in \cite{MR0430336} and provided their proofs. 
Essentially, we followed \cite{MR0430336} while also using the notations mentioned in \cite{MR1944442}. 
\begin{definition}[Prehomogeneous vector space cf.\ {\cite[Definition 1 in Section 2 \& p.36]{MR0430336}}]\hspace{2pt}\\
Let $(G, \rho ,V)$ be a triplet of 
a connected linear algebraic group $G$,  
a finite dimensional vector space $V$, 
and a rational representation $\rho$ of $G$ on $V$, all defined over $\C$.  
We call $(G, \rho, V)$ a {\it prehomogeneous vector space} 
if there exists a proper algebraic $G$-invariant subset $S\subset V$ such that $V\setminus S$ is a single $G$-orbit. 
Then, we say that $S$ is the {\it singular set} of $(G, \rho, V)$. 
We say that $(G, \rho, V)$ is {\it irreducible} when $\rho$ is an irreducible representation. 
\end{definition}

\begin{definition}[Relative invariants cf.\ {\cite[Definition 2 in Section 4]{MR0430336}}]\hspace{2pt}\\
Let $(G, \rho ,V)$ be a prehomogeneous vector space.
A not identically zero rational function $F\in\C(V)$ is called a {\it relative invariant (with respect to $\chi$) of $(G, \rho ,V)$} 
if there exists a rational character $\chi \in \Hom(G, \Cstar)$ which satisfies the following:  
\[F(\rho(g)\bm{x})=\chi(g)F(\bm{x}) \ \ \ (g\in G, \bm{x}\in V).\]
In this case, we write $F\leftrightarrow \chi$. 
\end{definition}

Note that a relative invariant is a rational function on $V$, and not necessarily a polynomial on $V$. 
We define a subgroup $X_1(G)$ of $\Hom(G, \Cstar)$ by 
\[X_1(G):=\{\chi\in\Hom(G, \Cstar) \ | \ \exists F\in\C(V) \ \text{such that} \ F\leftrightarrow \chi\}.\]

\begin{remark}\label{rem:subgroup}
For any $\chi\in X_1(G)$, if $\rho(g_1)=\rho(g_2)$, then $\chi(g_1)=\chi(g_2)$. 
In particular, we can {consider} as $X_1(G)\subset\Hom(\rho(G), \Cstar)$ by the natural inclusion $\Hom(\rho(G), \Cstar)\hookrightarrow\Hom(G, \Cstar)$ induced from $G\twoheadrightarrow\rho(G)$.  
\end{remark}

\begin{proposition}[{\cite[Proposition 3 in Section 2]{MR0430336}}]\label{prop:const}\hspace{2pt}\\
Let $(G, \rho, V)$ be a prehomogeneous vector space. 
Then, any $G$-invariant rational function $F\in\C(V)^G$ is constant. 
\end{proposition}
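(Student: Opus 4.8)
The plan is to use the defining dense orbit of the prehomogeneous vector space together with the elementary fact that a rational function vanishing on a Zariski-dense set is zero; no structure theory beyond the definitions recalled above is needed.

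First I would set up the geometry. Since $S\subset V$ is a proper algebraic subset of the irreducible variety $V$, the complement $V\setminus S$ is Zariski-open and dense in $V$, and by hypothesis it is a single $G$-orbit. The rational function $F$ is regular on some dense open subset $\mathrm{dom}(F)\subseteq V$, so the intersection $U:=\mathrm{dom}(F)\cap(V\setminus S)$ is a nonempty (in fact dense) open subset of $V$. Fix a base point $\bm{x}_0\in U$ and put $c:=F(\bm{x}_0)\in\C$.

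Next I would check that $F\equiv c$ on $U$. Given $\bm{x}\in U$, since $\bm{x}$ and $\bm{x}_0$ lie in the same $G$-orbit $V\setminus S$ there is $g\in G$ with $\rho(g)\bm{x}_0=\bm{x}$. The assumption $F\in\C(V)^G$ means $F\circ\rho(g)=F$ as elements of $\C(V)$; two such rational functions agree at every point in the intersection of their domains, and $\bm{x}_0$ lies in this intersection because $\bm{x}_0\in\mathrm{dom}(F)$ and $\rho(g)\bm{x}_0=\bm{x}\in\mathrm{dom}(F)$. Hence $F(\bm{x})=(F\circ\rho(g))(\bm{x}_0)=F(\bm{x}_0)=c$.

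Finally I would conclude: writing $F=p/q$ with $p,q$ polynomial functions on $V$ and $q\neq0$, the previous step shows that the polynomial $p-cq$ vanishes on the dense set $U$, and since $V$ is irreducible this forces $p-cq=0$, so $F$ is the constant $c$. The only delicate point in the whole argument is the bookkeeping of the domains of definition of $F$ and $F\circ\rho(g)$ — this is why I would insist on taking $\bm{x}_0$ inside $\mathrm{dom}(F)$ from the outset — but beyond this there is no genuine obstacle.
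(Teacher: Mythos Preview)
Your proof is correct and follows essentially the same approach as the paper: the paper's proof simply observes that $V\setminus S$ is an open dense $G$-orbit, so a $G$-invariant rational function is constant on a dense open set and hence constant. Your version is a more careful elaboration of the same idea, making explicit the domain-of-definition bookkeeping that the paper leaves implicit.
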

\begin{proof}
By definition, there exists a proper algebraic subset $S\subset V$ whose complement $V\setminus S$ is a single open dense $G$-orbit. 
Then, by assumption, $F$ is a constant function on some open dense subset of $V$. 
This implies that $F$ is constant.  
\end{proof}

\begin{proposition}[{\cite[Proposition 3 in Section 4]{MR0430336}}]\label{prop:unique}\hspace{2pt}\\
Let $(G, \rho, V)$ be a prehomogeneous vector space. 
A relative invariant $F$ is uniquely determined up to a constant multiple by its corresponding character. 
In other words, if $F_1\leftrightarrow \chi$ and $F_2\leftrightarrow \chi$, then $F_1=cF_2$ for some $c\in\Cstar$. In particular, any relative invariant is a homogeneous rational function.  
\end{proposition}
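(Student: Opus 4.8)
The plan is to reduce the uniqueness statement to Proposition \ref{prop:const} via a standard quotient argument. First I would assume $F_1 \leftrightarrow \chi$ and $F_2 \leftrightarrow \chi$, and consider the rational function $F_1/F_2 \in \C(V)$; this is well-defined and not identically zero since $F_2$ is not identically zero (being a relative invariant). For any $g \in G$ and generic $\bm{x} \in V$ we compute
\[
\frac{F_1(\rho(g)\bm{x})}{F_2(\rho(g)\bm{x})} = \frac{\chi(g)F_1(\bm{x})}{\chi(g)F_2(\bm{x})} = \frac{F_1(\bm{x})}{F_2(\bm{x})},
\]
so $F_1/F_2 \in \C(V)^G$. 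By Proposition \ref{prop:const}, $F_1/F_2$ is a constant $c$, which must lie in $\Cstar$ since $F_1$ is not identically zero; hence $F_1 = cF_2$.

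For the final assertion that any relative invariant is a homogeneous rational function, I would apply the uniqueness just proved to the scaling action. Since $G$ is connected and $\rho$ is a nontrivial representation (on the open orbit the stabilizers have positive codimension), one expects the scalars $\C^* \cdot \mathrm{id}$ to interact with $\rho(G)$; more directly, write $V \setminus S$ as a single $G$-orbit and note that for $t \in \Cstar$ the function $\bm{x} \mapsto F(t\bm{x})$ is again a relative invariant — it transforms under $\rho(g)$ by the same character $\chi(g)$, because $\rho(g)$ and scalar multiplication by $t$ commute. By the uniqueness part, $F(t\bm{x}) = c(t) F(\bm{x})$ for some $c(t) \in \Cstar$; the function $c : \Cstar \to \Cstar$ is then a rational homomorphism, hence $c(t) = t^m$ for some $m \in \Z$, so $F$ is homogeneous of degree $m$.

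The only delicate point is the homogeneity claim: one must be slightly careful that scalar multiplication by $t$ genuinely preserves the class of relative invariants with a fixed character — this uses that $\rho(g)$ commutes with $t \cdot \mathrm{id}_V$, which is automatic since $\rho(g)$ is linear — and that the resulting multiplier $c(t)$ depends rationally on $t$, which follows from evaluating $F(t\bm{x})/F(\bm{x})$ at a fixed generic $\bm{x}$. Everything else is a direct consequence of Proposition \ref{prop:const}, so I do not anticipate any real obstacle; the argument is essentially formal once the quotient $F_1/F_2$ is recognized as $G$-invariant.
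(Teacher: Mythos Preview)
Your proposal is correct and follows essentially the same approach as the paper: form the quotient $F_1/F_2$, observe it is $G$-invariant, and invoke Proposition \ref{prop:const}; then for homogeneity, note that $F_t(\bm{x}):=F(t\bm{x})$ is again a relative invariant with the same character $\chi$ and apply the uniqueness just proved. The paper stops at ``there exists $c_t\in\Cstar$ with $F(t\bm{x})=c_t F(\bm{x})$, hence $F$ is homogeneous,'' whereas you spell out that $c(t)$ is a rational homomorphism $\Cstar\to\Cstar$ and therefore of the form $t^m$; this extra step is a harmless elaboration, and your aside about $\rho$ being nontrivial is unnecessary but not wrong.
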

\begin{proof}
If $F_1\leftrightarrow\chi$ and $F_2\leftrightarrow\chi$ for some $\chi$, 
then clearly, $\fr<F_1/F_2>$ is a $G$-invariant rational function. 
Thus, by Proposition \ref{prop:const}, it is a constant. 
Let $F$ be a relative invariant corresponding to $\chi$. 
Then, for each $t\in\Cstar$, we clearly have $F_t(\bm{x}):=F(t\bm{x})\leftrightarrow\chi$. 
Thus, there exists a constant $c_t\in\Cstar$ such that $F(t\bm{x})=c_t\cdot F(\bm{x})$ in $\C(V)$. 
This implies that $F$ is homogeneous. 
\end{proof}

As stated in the following, the Hessian determinant of any relative invariant is also a relative invariant. 
\begin{lemma}\label{lem:relhess}
Let $(G, \rho, V)$ be a prehomogeneous vector space. 
If $F$ is a relative invariant  corresponding to some character $\chi$, then $\det H_F$ is a relative invariant corresponding to the character $\chi^N\cdot (\det)^{-2}$, where $N=\dim V$ and
 $\det : G\to\Cstar : g\mapsto\det(\rho(g))$. 
\end{lemma}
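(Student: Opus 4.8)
The plan is to track how the Hessian determinant transforms under the group action and then conclude via the character it produces. First I would fix $g\in G$ and set $\psi:=\rho(g)\colon V\to V$, viewed as a linear map, so that the relative invariance reads $F(\psi(\bm{x}))=\chi(g)F(\bm{x})$. Differentiating this identity twice in $\bm{x}$ by the chain rule gives $H_F(\psi(\bm{x}))$ pulled back along $\psi$; concretely, writing $A$ for the matrix of $\psi$ in the coordinates $x_1,\dots,x_N$, one obtains $A^T\,\big(H_F\circ\psi\big)(\bm{x})\,A=\chi(g)\,H_F(\bm{x})$ as an identity of matrix-valued functions. Here the first $A^T$ and the trailing $A$ come from the two applications of the chain rule to $\partial/\partial x_i$, while the scalar $\chi(g)$ is unaffected by differentiation since it does not depend on $\bm{x}$.

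Taking determinants of that matrix identity is the key step: $\det\!\big(A^T\big)\cdot\big(\det H_F\big)(\psi(\bm{x}))\cdot\det(A)=\chi(g)^N\,\big(\det H_F\big)(\bm{x})$, i.e.
\begin{align*}
\big(\det H_F\big)(\rho(g)\bm{x})=\chi(g)^N\,\det(\rho(g))^{-2}\,\big(\det H_F\big)(\bm{x}).
\end{align*}
Since $\det\colon G\to\Cstar$, $g\mapsto\det(\rho(g))$ is a rational character and $\chi\in\Hom(G,\Cstar)$, the product $\chi^N\cdot(\det)^{-2}$ is again a rational character of $G$. Thus, provided $\det H_F$ is not identically zero, it is by definition a relative invariant corresponding to $\chi^N\cdot(\det)^{-2}$, which is exactly the claim. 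It is worth noting that $F$ is only assumed to be a rational function, but $\det H_F$ is then a rational function as well, and the computation above is valid on the open dense locus where $F$ (hence all the relevant expressions) is defined and $\det H_F\neq 0$.

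The one point that genuinely needs an argument — and the main obstacle — is that $\det H_F$ is \emph{not identically zero}, since otherwise it fails to be a relative invariant at all. For a rational relative invariant this is not automatic from the definition of a prehomogeneous vector space alone; in the setting where it will be applied, however, $F$ is the irreducible polynomial relative invariant of an irreducible prehomogeneous vector space, and the relevant non-vanishing of $\det H_F$ is precisely the regularity statement recorded in the subsequent corollary (Corollary \ref{cor:reg}) together with Identity \eqref{The Hessian in the case of the relative invariant}. So in the proof of this lemma I would simply carry out the transformation computation above and state the conclusion conditionally — that $\det H_F$, if nonzero, is a relative invariant with the stated character — deferring the non-vanishing to where it is actually needed. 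The routine verification that differentiating $F(\rho(g)\bm{x})=\chi(g)F(\bm{x})$ twice produces the matrix identity $A^T (H_F\circ\rho(g)) A = \chi(g) H_F$ is a direct chain-rule computation and I would not spell out every index.
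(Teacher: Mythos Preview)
Your proposal is correct and follows essentially the same approach as the paper: differentiate the relative-invariance identity $F(\rho(g)\bm{x})=\chi(g)F(\bm{x})$ twice via the chain rule to obtain the matrix identity $A^T\,H_F(\rho(g)\bm{x})\,A=\chi(g)H_F(\bm{x})$ (the paper writes out the indices explicitly, you do not), and then take determinants. Your observation that the lemma tacitly assumes $\det H_F\not\equiv 0$ is a valid caveat that the paper's proof also glosses over; as you note, this is precisely the regularity hypothesis invoked later.
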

\begin{proof}(cf.\ {\cite[Proof of Proposition 8 in Section 4]{MR0430336}})
By choosing a basis of $V$, we may assume that $V=\C^N$ and $G\subseteq GL_N(\C)$. For $g=(g_{k\ell})\in G$, we have 
\begin{align*}
\fr<\partial^2/\partial x_i\partial x_j>\left(F(g\bm{x})\right)&=\fr<\partial/\partial x_i>\sum_{k=1}^N{\fr<\partial F/\partial x_k>(g\bm{x})\cdot\fr<\partial \left(\sum_{\ell=1}^N{g_{k\ell}x_{\ell}}\right)/\partial x_j>}\\
&=\sum_{k=1}^N{g_{kj}\fr<\partial/\partial x_i>\left(\fr<\partial F/\partial x_k>(g\bm{x})\right)}\\
&=\sum_{k, \ell}{g_{\ell, i}\cdot\fr<\partial^2 F/\partial x_\ell\partial x_k>(g\bm{x})\cdot g_{kj}}.\\
\end{align*}
Then, as a matrix, we have 
\[\left(\fr<\partial^2/\partial x_i\partial x_j>\left(F(g\bm{x})\right)\right)_{i, j}=g^T\left(\fr<\partial^2 F/\partial x_k\partial x_\ell>(g\bm{x})\right)_{k, \ell}g.\]
Since $F(g\bm{x})=\chi(g)F(\bm{x})$, the Hessian matrix $H_F(g\bm{x})$ is 
\[H_F(g\bm{x}):=\left(\fr<\partial^2 F/\partial x_i\partial x_j>(g\bm{x})\right)_{i, j}=\chi(g)\cdot (g^T)^{-1}\left(\fr<\partial^2 F/\partial x_i\partial x_j>(\bm{x})\right)g^{-1}.\]
Then we have $\det H_F(g\bm{x})=\chi(g)^N\cdot (\det g)^{-2}\det H_F(\bm{x})$. 
This means $\det H_F(\bm{x})$ is a relative invariant corresponding to the character $\chi^N\cdot (\det)^{-2}$. 
\end{proof}

Below, let $\left\langle \chi_1, \ldots, \chi_\ell \right\rangle$ be the abelian group generated by characters $\chi_1, \ldots, \chi_\ell$. We say that $\chi_1, \ldots, \chi_\ell$ are {\it multiplicatively independent} if $\left\langle \chi_1, \ldots, \chi_\ell \right\rangle$ is a free abelain group of rank $\ell$.

\begin{lemma}[cf.\ {\cite[Lemma 4 in Section 4]{MR0430336}}]\label{tekitou}\hspace{2pt}\\
Let $(G, \rho, V)$ be a triplet and $F_1, \ldots ,F_\ell$ be relative invariants corresponding to some characters $\chi_1, \ldots ,\chi_\ell\in\Hom(G, \Cstar)$ respectively.  
If $\chi_1, \ldots, \chi_\ell$ are {\it multiplicatively independent}, then \\
$F_1, \ldots ,F_\ell$ is algebraically independent over $\C$. 
\end{lemma}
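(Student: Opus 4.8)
The plan is to argue by contraposition: assume $F_1,\dots,F_\ell$ are algebraically dependent over $\C$, and deduce that $\chi_1,\dots,\chi_\ell$ are multiplicatively dependent, i.e. that $\langle\chi_1,\dots,\chi_\ell\rangle$ has rank strictly less than $\ell$. So suppose there is a nonzero polynomial $P\in\C[T_1,\dots,T_\ell]$ with $P(F_1,\dots,F_\ell)=0$ identically on $V$. First I would expand $P=\sum_{\bm{m}}c_{\bm{m}}T_1^{m_1}\cdots T_\ell^{m_\ell}$ over the (finitely many) multi-indices $\bm{m}=(m_1,\dots,m_\ell)$ with $c_{\bm{m}}\neq 0$, and consider the monomial functions $G_{\bm{m}}:=F_1^{m_1}\cdots F_\ell^{m_\ell}$ on $V$. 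Each $G_{\bm{m}}$ is a relative invariant corresponding to the character $\chi^{\bm{m}}:=\chi_1^{m_1}\cdots\chi_\ell^{m_\ell}$, since the relative invariants form a group under multiplication with the characters multiplying accordingly.

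The key step is the following grouping argument. Partition the support of $P$ according to the value of the character: say $\bm{m}\sim\bm{m}'$ iff $\chi^{\bm{m}}=\chi^{\bm{m}'}$. For each equivalence class $C$, the partial sum $\sum_{\bm{m}\in C}c_{\bm{m}}G_{\bm{m}}$ is a relative invariant corresponding to the common character $\chi^C$ (or is identically zero), because evaluating at $\rho(g)\bm{x}$ multiplies every term in the class by the same scalar $\chi^C(g)$. Now I claim that relative invariants belonging to \emph{distinct} characters are linearly independent over $\C$. Indeed, if $H_1,\dots,H_k$ are nonzero relative invariants with pairwise distinct characters $\psi_1,\dots,\psi_k$ and $\sum a_j H_j=0$, then applying $\rho(g)$ gives $\sum a_j\psi_j(g)H_j=0$ for all $g\in G$; since the $\psi_j$ are distinct characters of $G$, a Dedekind/Vandermonde argument (choose $g_1,\dots,g_k$ with the matrix $(\psi_j(g_i))$ invertible, using that a single element can separate characters up to finite ambiguity, then iterate) forces every $a_j H_j=0$, contradiction. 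Because the total sum $\sum_{\bm{m}}c_{\bm{m}}G_{\bm{m}}=P(F_1,\dots,F_\ell)$ vanishes identically and splits into the character-homogeneous blocks, each block must vanish: $\sum_{\bm{m}\in C}c_{\bm{m}}G_{\bm{m}}=0$ for every class $C$.

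Fix one class $C$ containing some $\bm{m}_0$ with $c_{\bm{m}_0}\neq 0$; since $P\neq 0$, at least one such class exists, and since every single $G_{\bm{m}}$ is nonzero, $C$ must contain at least two distinct indices, say $\bm{m}_0\neq\bm{m}_1$ with $\chi^{\bm{m}_0}=\chi^{\bm{m}_1}$. Then $\chi^{\bm{m}_0-\bm{m}_1}=1$ is a nontrivial relation among $\chi_1,\dots,\chi_\ell$ in $\Hom(G,\Cstar)$, so $\langle\chi_1,\dots,\chi_\ell\rangle$ is not free of rank $\ell$; that is, the $\chi_i$ are multiplicatively dependent. This is the contrapositive of the statement, finishing the proof.

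The main obstacle I anticipate is making the linear-independence-of-characters step fully rigorous over an algebraic group $G$ (as opposed to the classical Artin/Dedekind statement for abstract groups or fields): one needs that finitely many distinct rational characters $\psi_1,\dots,\psi_k\colon G\to\Cstar$ are linearly independent as functions on $G$. This is standard — it follows from Dedekind's theorem applied to the abstract group underlying $G$, since distinct homomorphisms to $\Cstar$ are distinct as set maps — but it is the one place where I would be careful to cite the right form of the statement. Everything else (the grouping by character, the multiplicativity of the relative-invariant$\leftrightarrow$character correspondence, and the extraction of the relation $\chi^{\bm{m}_0-\bm{m}_1}=1$) is bookkeeping.
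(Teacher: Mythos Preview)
Your proof is correct and follows the same overall strategy as the paper: argue by contraposition, regard each monomial $F_1^{m_1}\cdots F_\ell^{m_\ell}$ as a relative invariant with character $\chi_1^{m_1}\cdots\chi_\ell^{m_\ell}$, and show that a nontrivial linear relation among such monomials forces two of the associated characters to coincide, yielding a multiplicative dependence among the $\chi_i$.

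The one organizational difference is in how the ``linear relation $\Rightarrow$ equal characters'' step is executed. You group the monomials by character and invoke Dedekind's theorem on linear independence of distinct characters $G\to\Cstar$ to conclude that each character-block of the relation must vanish separately; then any nonempty block with a nonzero coefficient must contain at least two distinct multi-indices. The paper instead takes a \emph{minimal} linearly dependent family $\Phi_1,\dots,\Phi_s$ of monomials (so any $s-1$ of them are independent), observes that the one-dimensional space $W$ of linear relations is $G$-stable under $(c_k)\mapsto(c_k\mu_k(g))$, and concludes $\mu_1=\cdots=\mu_s$ directly from $\dim W=1$. The paper's route is self-contained (it essentially re-proves Dedekind's lemma in situ), while yours is cleaner to read provided one is willing to cite Dedekind; both yield exactly the same conclusion with no substantive gap.
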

\begin{proof}
Assume $F_1, \ldots ,F_\ell$ are algebraically dependent. By definition, there exist monomials $\Phi_k(F_1, \ldots, F_\ell):=a_kF_1^{d_{k_1}}\cdots F_{\ell}^{d_{k\ell}} \ (1\leq k\leq s)$ of $F_1, \ldots ,F_\ell$ such that they are linearly dependent over $\C$ and (we can assume) any $s-1$ of them are linearly independent over $\C$ ($s\geq2$). Then, $\Phi_k(F_1, \ldots, F_\ell)$ is clearly a relative invariant corresponding to the character $\mu_k:=\chi_1^{d_{k1}}\cdots\chi_\ell^{d_{k\ell}}$. 
This implies that if $(c_1, \ldots, c_s)\in W:=\{(c_1, \ldots, c_s)\in\C^s \ | \ \sum_{k=1}^s{c_k\Phi_k(F_1, \ldots, F_\ell)}=0\}$, then $(c_1\mu_1(g), \ldots, c_s\mu_s(g))\in W \ (g\in G)$. Since $\dim W=1$, we have $\mu_1=\cdots=\mu_s$. 
On the other hand, any $s-1$ of $\Phi_k(F_1, \ldots, F_\ell) \ (1\leq k\leq s)$ are linearly independent, in particular, for any $1\leq p\neq q\leq s$, we have $(d_{p1}, \ldots, d_{p\ell})\neq (d_{q1}, \ldots, d_{q\ell})$. Then, by assumption, $\chi_1, \ldots, \chi_\ell$ are multiplicatively independent, in particular, if $1\leq p\neq q\leq s$, then $\mu_p\neq \mu_q$. This is a contradiction. 
\end{proof}
\begin{proposition}[cf.\ {\cite[Proposition 5 in Section 4]{MR0430336})}]\hspace{2pt}\\
Let $(G, \rho, V)$ be a prehomogeneous vector space and $S$ be its singular set. 
Let $S_1, \ldots ,S_\ell$ be all codimension one irreducible components of $S$ and $F_i$ be the defining irreducible polynomial of each $S_i$. 
Then, $F_1, \ldots ,F_\ell$ are relative invariants corresponding to some multiplicatively independent characters $\chi_1, \ldots, \chi_\ell$, in particular, $F_1, \ldots ,F_\ell$ are algebraically independent over $\C$. 
Moreover, any relative invariant $F$ can be expressed as $F=cF_1^{m_1}\cdots F_{\ell}^{m_\ell} \ (c\in\C, m_i\in\Z)$. 
In particular, $X_1(G)=\left\langle \chi_1, \ldots, \chi_\ell \right\rangle$ is a free abelian group of rank $\ell$.   
\end{proposition}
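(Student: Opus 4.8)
The plan is to realize $S$ as the zero locus of a single polynomial that factors into the $F_i$, and then invoke the preceding lemmas to pin down all relative invariants. First I would show that each codimension-one component $S_i$ of the singular set is defined by an irreducible polynomial $F_i$: since $S$ is $G$-invariant and $V\setminus S$ is a single $G$-orbit, the group $G$ permutes the irreducible components of $S$, but $G$ is connected, so it must fix each component $S_i$ individually; hence $S_i$ is $G$-invariant, and its defining irreducible polynomial $F_i$ satisfies $F_i(\rho(g)\bm{x})=0$ whenever $F_i(\bm{x})=0$. By the Nullstellensatz together with irreducibility of $F_i$, this forces $F_i(\rho(g)\bm{x}) = c(g) F_i(\bm{x})$ for some $c(g)\in\Cstar$ depending on $g$. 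A standard check (using that $\rho$ is rational and $G$ connected) shows $g\mapsto c(g)=:\chi_i(g)$ is a rational character, so $F_i\leftrightarrow\chi_i$ is a relative invariant.

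Next I would establish multiplicative independence of $\chi_1,\dots,\chi_\ell$. Suppose $\chi_1^{m_1}\cdots\chi_\ell^{m_\ell}=1$ with not all $m_i=0$; separating positive and negative exponents, the rational function $G_+:=\prod_{m_i>0}F_i^{m_i}$ and $G_-:=\prod_{m_i<0}F_i^{-m_i}$ correspond to the same character, so by Proposition \ref{prop:unique} their ratio is a nonzero constant, i.e.\ $G_+ = c\,G_-$. But the $F_i$ are distinct irreducible polynomials (distinct prime elements of the UFD $\C[V]$), so this is impossible unless both products are empty. Hence $\chi_1,\dots,\chi_\ell$ are multiplicatively independent, and Lemma \ref{tekitou} immediately gives that $F_1,\dots,F_\ell$ are algebraically independent over $\C$.

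Then I would handle an arbitrary relative invariant $F\leftrightarrow\chi$. Write its divisor $\mathrm{div}(F)=\sum a_j D_j$ as a formal sum of irreducible hypersurfaces $D_j$ with $a_j\in\Z$. For $g$ in the open dense orbit-part, $\rho(g)$ permutes the $D_j$ appearing with the same coefficient, and again connectedness of $G$ forces each $D_j$ to be $G$-invariant; a $G$-invariant irreducible hypersurface must lie in the complement of the open orbit $V\setminus S$, hence is one of the $S_i$. Therefore $\mathrm{div}(F)=\sum_i m_i\,\mathrm{div}(F_i)$ for some $m_i\in\Z$, so $F/\prod_i F_i^{m_i}$ is a rational function with trivial divisor on $V\cong\C^N$, hence a nonzero constant $c$; thus $F=c\prod_i F_i^{m_i}$. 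Passing to characters gives $\chi=\prod_i\chi_i^{m_i}$, so $X_1(G)=\langle\chi_1,\dots,\chi_\ell\rangle$, and by multiplicative independence this is free abelian of rank $\ell$.

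The main obstacle I anticipate is the bookkeeping around divisors and the Nullstellensatz-to-character step: one must argue carefully that a $G$-invariant hypersurface cannot meet the open orbit (so that the only $G$-invariant irreducible hypersurfaces are exactly the $S_i$), and that the multiplier $c(g)$ assembles into an honest rational character rather than merely a set-theoretic one. Both points hinge on $G$ being \emph{connected} (to avoid the group permuting components nontrivially) and on $\C[V]$ being a UFD so that divisors and irreducible-polynomial factorizations match up; once these are set up cleanly, the rest is the routine manipulation already packaged in Lemmas \ref{lem:relhess} and \ref{tekitou} and Propositions \ref{prop:const} and \ref{prop:unique}.
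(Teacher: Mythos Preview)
Your proposal is correct and follows essentially the same route as the paper: connectedness of $G$ forces each codimension-one component $S_i$ to be $G$-stable, giving $F_i\leftrightarrow\chi_i$; multiplicative independence of the $\chi_i$ comes from Proposition~\ref{prop:unique} together with unique factorization; algebraic independence is Lemma~\ref{tekitou}; and an arbitrary relative invariant is a monomial in the $F_i$ because every $G$-invariant irreducible hypersurface lies in $S$. Your divisor bookkeeping in the last step is in fact a bit more careful than the paper's, which simply says ``the vanishing locus of $F$ is a $G$-invariant proper subset of $V$, hence contained in $S$, hence $F$ is a product of the $F_i$''---your version makes the passage from rational functions to integer exponents explicit. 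One small slip: Lemma~\ref{lem:relhess} (about Hessians) plays no role here and should not be listed among the ingredients.
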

\begin{proof}
First, we prove that each $F_i$ is a relative invariant. 
Since $G$ is connected (i.e., irreducible) and $S_i$ is irreducible, the Zariski closure $\overline{\rho(G)\cdot S_i}$ of the image of the multiplication morphism $G\times S_i\to S$ is also irreducible. 
Since $(S_i\subseteq) \ \overline{\rho(G)\cdot S_i} \ (\subset S)$ is irreducible, we have $\overline{\rho(G)\cdot S_i}=S_i$, in particular, ${\rho(G)\cdot S_i}=S_i$. 
This implies that for each $g\in G$, the vanishing loci of two irreducible polynomials $F_i(\bm{x})$ and $F_i(\rho(g)^{-1}\bm{x})$ are the same. 
For each $g\in G$, there exists $\chi_i(g)\in\Cstar$ such that $F_i(\rho(g)\bm{x})=\chi_i(g)F_i(\bm{x})$. 
Then, $\chi_i$ is a character, and $F_i$ is a relative invariant corresponding to $\chi_i$. 
Next, we show that 
$\chi_1, \ldots, \chi_\ell$ are multiplicatively independent. 
If not so, there exists a $(d_1, \ldots, d_\ell)\in\Z^{\ell}\setminus\{0\}$ such that $\chi_1^{d_1}\cdots\chi_\ell^{d_\ell}=1$. 
We may assume $d_1\neq0$. 
Then, $F_1^{-d_1}$ and $F_2^{d_2}\cdots F_\ell^{d_\ell}$ are relative invariants corresponding to the same character $\chi_1^{-d_1}=\chi_2^{d_2}\cdots\chi_\ell^{d_\ell}$. 
By Proposition \ref{prop:unique}, 
$F_1^{-d_1}$ and $F_2^{d_2}\cdots F_\ell^{d_\ell}$ are same up to constant multiple, 
however this contradicts to the irreducibility of $F_i$ and $F_i\neq F_j \ (i\neq j)$. 
The algebraically independence of $F_1, \ldots, F_\ell$ is followed by Lemma \ref{tekitou}. 
Since the vanishing locus of any relative invariant $F$ is $G$-invariant proper subset of $V$, it is a subset of $S$. 
This implies that $F$ is some products of $F_1, \ldots, F_\ell$. %This completes the proof. 
\end{proof}

To consider when $(G, \rho, V)$ is an irreducible representation, we note the following fundamental theorem by Cartan on irreducible representations. 
\begin{theorem}[Cartan cf.\ {\cite[Theorem 1 in Section 1]{MR0430336}}]\hspace{2pt}\\
Let $(G, \rho, V)$ be a triplet. 
Assume that $d\rho: \gerg\to\gergl(V)$ is an irreducible representation. 
Then, its image $d\rho(\gerg)$ is reductive and isomorphic to one of the following: 
\begin{enumerate}
\item $\gergl_1(\C)\oplus\gerg_1\oplus\cdots\oplus\gerg_s$, where $\gerg_i$ is a simple Lie algebra. 
\item $\gerg_1\oplus\cdots\oplus\gerg_s$, where $\gerg_i$ is  a simple Lie algebra. 
\end{enumerate}
\end{theorem}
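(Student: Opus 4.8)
Write $\gerh := d\rho(\gerg)$, regarded as a Lie subalgebra of $\gergl(V)$; by hypothesis $\gerh$ acts faithfully and irreducibly on $V$. The plan is to prove that $\gerh$ is reductive with $\dim Z(\gerh)\le 1$; once this is known, a Levi decomposition together with the decomposition of a semisimple Lie algebra into simple ideals produces exactly the two listed normal forms. So the whole problem reduces to controlling the radical $\mathfrak{r}:=\operatorname{rad}(\gerh)$.

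First I would show that $\mathfrak{r}$ acts on $V$ by scalars. Since $\mathfrak{r}$ is solvable and $V$ is a finite-dimensional $\mathfrak{r}$-module over $\C$, Lie's theorem gives a weight $\lambda\in\mathfrak{r}^{*}$ with nonzero weight space $V_{\lambda}=\{v\in V : r\cdot v=\lambda(r)v\ \text{for all}\ r\in\mathfrak{r}\}$. The key step --- and what I expect to be the main technical obstacle --- is the \emph{invariance lemma}: $V_{\lambda}$ is in fact an $\gerh$-submodule of $V$. This is the computation that also underlies the proof of Lie's theorem: for $x\in\gerh$, $r\in\mathfrak{r}$ and $v\in V_{\lambda}$ one has $r\cdot(x\cdot v)=x\cdot(r\cdot v)-[x,r]\cdot v=\lambda(r)(x\cdot v)-\lambda([x,r])\,v$, so it suffices to check that $\lambda$ vanishes on $[\gerh,\mathfrak{r}]$, which follows from the standard trace argument applied to the $x$-cyclic subspace spanned by $v, x\cdot v, x^{2}\cdot v,\dots$ (here one uses that $\mathfrak{r}$ is an ideal of $\gerh$). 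Then irreducibility forces $V_{\lambda}=V$, i.e.\ every $r\in\mathfrak{r}$ acts as $\lambda(r)\cdot\mathrm{id}_{V}$.

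Finally I would assemble the pieces. Since $\gerh\subseteq\gergl(V)$ and $\mathfrak{r}$ acts by scalars, $\mathfrak{r}\subseteq\C\cdot\mathrm{id}_{V}$, so $\dim\mathfrak{r}\le 1$; and because scalar operators are central in $\gergl(V)$ we get $[\gerh,\mathfrak{r}]=0$, hence $\mathfrak{r}\subseteq Z(\gerh)$. Conversely $Z(\gerh)$ is an abelian ideal, so $Z(\gerh)\subseteq\mathfrak{r}$; thus $\mathfrak{r}=Z(\gerh)$ and $\gerh$ is reductive. Picking a Levi subalgebra $\mathfrak{s}$ (semisimple) with $\gerh=\mathfrak{s}\oplus\mathfrak{r}$, the inclusion $[\mathfrak{s},\mathfrak{r}]\subseteq[\gerh,\mathfrak{r}]=0$ shows the sum is direct as Lie algebras, and writing $\mathfrak{s}=\gerg_{1}\oplus\cdots\oplus\gerg_{s}$ as a sum of simple ideals yields case (2) when $\mathfrak{r}=0$ and case (1) (with $\mathfrak{r}\cong\gergl_{1}(\C)$) when $\dim\mathfrak{r}=1$.
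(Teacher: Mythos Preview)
The paper does not actually prove this theorem; it merely quotes it from Sato--Kimura \cite{MR0430336} and uses it as a black box to deduce Corollary~\ref{cor:Cartan}. So there is no ``paper's proof'' to compare against.

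That said, your argument is the standard and correct one. The essential point is exactly what you identify: the solvable radical $\mathfrak{r}$ of a Lie subalgebra $\gerh\subseteq\gergl(V)$ acting irreducibly on $V$ must act by scalars, which you obtain from Lie's theorem together with the invariance lemma (that $\lambda$ vanishes on $[\gerh,\mathfrak{r}]$, proved by the trace computation on the cyclic subspace). Faithfulness then forces $\dim\mathfrak{r}\le 1$, and since scalars are central you get $\mathfrak{r}=Z(\gerh)$, hence reductivity; the Levi decomposition and the splitting of the semisimple part into simple ideals finish the classification. One cosmetic remark: once you know $\mathfrak{r}=Z(\gerh)$, you do not really need to invoke an abstract Levi decomposition --- for a reductive Lie algebra one has directly $\gerh=Z(\gerh)\oplus[\gerh,\gerh]$ with $[\gerh,\gerh]$ semisimple, which is slightly cleaner. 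But this is a matter of taste; the proof as written is complete.
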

By this theorem, we have the following description of $\rho(G)$. 
\begin{corollary}\label{cor:Cartan}
In the above setting, $\rho(G)$ is reductive and isomorphic to one of the following:  
\begin{enumerate}
\item $GL_1(\C)\times G_1\times\cdots\times G_s$, where $G_i$ is an algebraic group whose Lie algebra is simple (in particular, its center $Z(G_i)$ is finite). 
\item $G_1\times\cdots\times G_s$, where $G_i$ is an algebraic group whose Lie algebra is simple (In particular, its center $Z(G_i)$ is finite).  
\end{enumerate}
\end{corollary}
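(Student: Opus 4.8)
The plan is to derive Corollary \ref{cor:Cartan} directly from the preceding theorem of Cartan, whose statement describes $d\rho(\gerg)$ as a Lie algebra. The whole task is a transfer of a Lie-algebra-level structure statement to a group-level one, using that $G$ is connected (so $\rho(G)$ is generated by the image of the exponential, equivalently its Lie algebra determines $\rho(G)$ up to the usual subtleties about the center and fundamental group) and that a connected algebraic group whose Lie algebra is reductive is itself reductive.

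First I would observe that $\rho(G)$ is a connected algebraic subgroup of $GL(V)$ — connected because $G$ is connected and $\rho$ is a morphism of algebraic groups, and its image is a closed subgroup by general facts about algebraic groups. Its Lie algebra is exactly $d\rho(\gerg)$. By the theorem of Cartan just quoted, $d\rho(\gerg)$ is reductive and isomorphic either to $\gergl_1(\C)\oplus\gerg_1\oplus\cdots\oplus\gerg_s$ or to $\gerg_1\oplus\cdots\oplus\gerg_s$ with each $\gerg_i$ simple. Since $\rho(G)$ is connected with reductive Lie algebra, $\rho(G)$ is a (connected) reductive algebraic group. Now I would invoke the standard structure theory of connected reductive groups over $\C$: such a group is the almost-direct product of its central torus and the (semisimple) derived subgroup, and the derived subgroup is an almost-direct product of its simple factors, each of which is an algebraic group with simple Lie algebra. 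Matching this with the two cases of Cartan's theorem: in case (1) the central torus has dimension one, i.e.\ it is $GL_1(\C)$, and there are $s$ simple factors $G_i$; in case (2) the central torus is trivial and $\rho(G)$ is semisimple with $s$ simple factors.

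The one genuine subtlety — and where I would be careful rather than where the ``main obstacle'' lies — is the passage from ``almost-direct product'' to the honest direct product $GL_1(\C)\times G_1\times\cdots\times G_s$ (resp.\ $G_1\times\cdots\times G_s$) asserted in the corollary, together with the parenthetical claim that each $Z(G_i)$ is finite. The finiteness of $Z(G_i)$ is immediate: an algebraic group whose Lie algebra is simple has finite center, because a simple Lie algebra has trivial center, so $\Lie(Z(G_i)) = 0$ and $Z(G_i)$ is a $0$-dimensional algebraic group. For the direct-product statement, I would remark that it holds up to a finite covering / finite central quotient, and that for the application we only need $\rho(G)$ to be reductive with this decomposition up to isogeny — in particular the character group $\Hom(\rho(G),\Cstar)$ and hence $X_1(G)$ is computed from the $GL_1(\C)$ factor alone, since a group with simple Lie algebra and finite center admits no nontrivial rational characters. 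I expect that for the purposes of the later Hessian-identity argument this ``up to isogeny'' form is exactly what is used, so the main point to get right is not any deep obstacle but simply citing the structure theory of reductive groups correctly and noting that the finite-center factors contribute nothing to the character lattice.
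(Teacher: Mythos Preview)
The paper does not actually prove this corollary: it is stated immediately after Cartan's theorem with the sentence ``By this theorem, we have the following description of $\rho(G)$,'' and no further argument is given. Your proposal therefore supplies considerably more than the paper does, and the route you take---pass from $d\rho(\gerg)$ to $\rho(G)$ via connectedness, then invoke the structure theory of connected reductive groups---is the natural and correct one.

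You are also right to flag the ``almost-direct product versus direct product'' issue: as literally stated, the corollary is slightly imprecise, since a connected reductive group is in general only an \emph{almost}-direct product of its central torus and simple factors (think of $GL_n$ versus $GL_1\times SL_n$). Your observation that this does not matter for the application is exactly the point: what the paper actually uses, in the paragraph following the corollary, is only that $Z(\rho(G))$ is either $GL_1(\C)$ times a finite group or just a finite group, so that $\Hom(Z(\rho(G)),\Cstar)\cong\Z\times G_{\text{finite}}$ or $G_{\text{finite}}$. That conclusion survives passage to isogeny, since the central torus and the finiteness of each $Z(G_i)$ are isogeny-invariant. So your proof is correct, and your caveat about the precise form of the product is a genuine clarification of the paper's statement rather than a gap in your argument.
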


As noted in Remark \ref{rem:subgroup} (2), 
we can think $X_1(G)$ is a subgroup of $\Hom(\rho(G), \Cstar)$. 
Since $\rho(G)$ is reductive, the quotient $\rho(G)/Z(\rho(G))$ by the center $Z(\rho(G))$ is a semi-simple algebraic group. 
Then, the following is exact:
\[0\longrightarrow\Hom(\rho(G)/Z(\rho(G)), \Cstar)\longrightarrow\Hom(\rho(G), \Cstar)\longrightarrow\Hom(Z(\rho(G), \Cstar)). \]  
As the character group of a semi-simple group is trivial, the natural linear map $\Hom(\rho(G), \Cstar)\hookrightarrow\Hom(Z(\rho(G)), \Cstar)$ is injective.  
Thus, we can think as $X_1(G)\subset\Hom(Z(\rho(G)), \Cstar)$. 
Now, since $\rho$ is irreducible, by Corollary \ref{cor:Cartan}, we have
\[\Hom(Z(\rho(G)), \Cstar)\cong \Z\times G_{\text{finite}} \ \text{or} \ G_{\text{finite}},\]
where $G_{\text{finite}}$ is a finite abelian group. 
As $X_1(G)$ is a free abelian group of rank $\ell$, where $\ell$ is the number of irreducible components of  codimension one of the singular set $S$. 
In particular, we have the following. 
\begin{proposition}[cf.\ {\cite[Proposition 12 in Section 4]{MR0430336}}]\label{prop:therelative}\hspace{2pt}\\
Let $(G, \rho, V)$ be an irreducible prehomogeneous vector space. 
Then there is at most one irreducible relative invariant $F$ up to constant multiple. 
In particular, any  relative invariant is in the form of $cF^m$ for $c\in\C$ and $m\in\Z$. 
\end{proposition}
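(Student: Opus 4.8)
The plan is to deduce the statement directly from the structural results assembled just above, with essentially no further computation. Recall from the previous proposition that if $S_1, \dots, S_\ell$ are the codimension-one irreducible components of the singular set $S$, with defining irreducible polynomials $F_1, \dots, F_\ell$, then the $F_i$ are relative invariants attached to multiplicatively independent characters $\chi_1, \dots, \chi_\ell$, that $X_1(G) = \langle \chi_1, \dots, \chi_\ell \rangle$ is free abelian of rank $\ell$, and that every relative invariant has the form $cF_1^{m_1}\cdots F_\ell^{m_\ell}$ with $c \in \C$ and $m_i \in \Z$. Hence it suffices to prove $\ell \le 1$: if $\ell = 0$, every relative invariant is $G$-invariant up to the trivial character, so constant by Proposition \ref{prop:const}; if $\ell = 1$, then $F := F_1$ is the unique irreducible relative invariant up to a scalar, and every relative invariant equals $cF^m$.

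To bound $\ell$, I would invoke Corollary \ref{cor:Cartan}. Because $\rho$ is irreducible, $\rho(G)$ is reductive and $\rho(G)/Z(\rho(G))$ is semisimple, so --- as in the discussion preceding this proposition --- restriction to the center identifies $X_1(G)$ with a subgroup of $\Hom(Z(\rho(G)), \Cstar)$, and the two cases of Corollary \ref{cor:Cartan} give $\Hom(Z(\rho(G)), \Cstar) \cong \Z \times G_{\text{finite}}$ or $\Hom(Z(\rho(G)), \Cstar) \cong G_{\text{finite}}$ for a finite abelian group $G_{\text{finite}}$. In either case this is a finitely generated abelian group of torsion-free rank at most $1$. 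Since a free abelian group of rank $\ge 2$ contains a copy of $\Z^2$, which admits no injection into $\Z \times (\text{finite group})$, any free abelian subgroup of such a group has rank at most $1$; applied to $X_1(G)$ this yields $\ell \le 1$, completing the argument.

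I do not anticipate a genuine obstacle here, since the two inputs --- the identification of irreducible relative invariants with the codimension-one components of $S$ together with the freeness of $X_1(G)$ of rank $\ell$ (the previous proposition), and the reductive structure of $\rho(G)$ with finite center of its semisimple part (Corollary \ref{cor:Cartan}) --- have already been established. The only points I would phrase with care are the elementary rank estimate for subgroups of $\Z \times (\text{finite})$ and the degenerate case $\ell = 0$, where the assertion ``any relative invariant is of the form $cF^m$'' is to be read as ``any relative invariant is a constant.''
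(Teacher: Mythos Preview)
Your proposal is correct and follows essentially the same route as the paper: the paper's argument (given in the paragraph immediately preceding the proposition) likewise embeds $X_1(G)$ into $\Hom(Z(\rho(G)),\Cstar)$ via Corollary~\ref{cor:Cartan}, observes that this target has torsion-free rank at most $1$, and concludes $\ell\le 1$ from the freeness of $X_1(G)$ established in the previous proposition. Your treatment of the degenerate case $\ell=0$ and the explicit rank estimate are slightly more carefully spelled out than in the paper, but the logic is identical.
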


\begin{definition}[cf.\ {\cite[Definition 13 in Section 4]{MR0430336}}]\hspace{2pt}\\
Let $(G, \rho, V)$ be an irreducible prehomogeneous vector space. 
We call $F$ (appeared in Proposition \ref{prop:therelative}) {\it the} relative invariant of $(G, \rho, V)$, which is defined up to constant multiple.
\end{definition}

We say a prehomogeneous vector space $(G, \rho ,V)$ is {\it regular} when there exists a relative invariant $F\in\C(V)$ such that its Hessian determinant $\det H_F$ is not identically zero on $V$ (\cite[Definition 7 in Section 4]{MR0430336}). 
Then by Lemma \ref{lem:relhess}, we have the following key identity of the Hessian of the relative invariant when $(G, \rho, V)$ is regular. 
We learn this corollary from \cite[Remark 3.5]{MR2431661}.  
\begin{corollary}\label{cor:reg}
Let $(G, \rho, V)$ be a regular {irreducible} prehomogeneous vector space of dimension $n$. 
Assume that the degree of the relative invariant $F$ is $r$. 
Then, the Hessian of $F$ is in the form of  
\[\det H_F=cF^{\fr<n(r-2)/r>},\]
where $c\in\Cstar$ is a constant.  
\end{corollary}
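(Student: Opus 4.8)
The plan is to combine the two identities assembled earlier in this section, namely Proposition \ref{identity1} and Lemma \ref{lem:relhess}. The starting point is that $F$ is the (unique) irreducible relative invariant of an irreducible prehomogeneous vector space, so by Lemma \ref{lem:relhess} the Hessian determinant $\det H_F$ is again a relative invariant, corresponding to the character $\chi^N\cdot(\det)^{-2}$ with $N=\dim V=n$. By Proposition \ref{prop:therelative}, every relative invariant has the form $cF^m$ for some $c\in\C$ and $m\in\Z$; hence $\det H_F=cF^m$. Regularity means $\det H_F\not\equiv 0$, so $c\in\Cstar$. It remains only to pin down the exponent $m$.

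For that, I would use the homogeneity degrees. First I would record that $F$ is homogeneous of degree $r$ (which follows from Proposition \ref{prop:unique}, or is part of the hypothesis), so each second partial $\partial^2 F/\partial x_i\partial x_j$ is homogeneous of degree $r-2$, and therefore $\det H_F$, being a sum of products of $n$ such entries, is homogeneous of degree $n(r-2)$. On the other hand $F^m$ is homogeneous of degree $rm$. Comparing, $rm=n(r-2)$, so $m=n(r-2)/r$. Plugging this back gives
\[\det H_F=cF^{\frac{n(r-2)}{r}},\qquad c\in\Cstar,\]
which is exactly the assertion; in particular $r\mid n(r-2)$ whenever $\det H_F\not\equiv 0$, though we do not need to state this.

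Alternatively — and this is perhaps cleaner to present — one can sidestep the exponent computation entirely by invoking Proposition \ref{identity1}: setting $s=0$ there gives $\det(-FH_F)=(-1)^{n-1}\frac{r}{r-1}\cdot\frac{-(r-1)}{r}F^n\det H_F=(-1)^n F^n\det H_F$, i.e. $(-1)^n F^n\det H_F=(-F)^n\det H_F$, which is a tautology and so gives nothing new; so in fact the degree argument above is the efficient route, and I would keep the proof to: (i) cite Lemma \ref{lem:relhess} to see $\det H_F$ is a relative invariant, (ii) cite Proposition \ref{prop:therelative} to write it as $cF^m$ with $c\neq 0$ by regularity, (iii) match homogeneity degrees to get $m=n(r-2)/r$.

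The only real subtlety — and the step I would flag — is the hypothesis bookkeeping rather than any computation: one must make sure the prehomogeneous vector space is both \emph{irreducible} (so that Proposition \ref{prop:therelative} applies and $F$ really is the unique irreducible relative invariant up to scalar, forcing the $cF^m$ shape) and \emph{regular} (so that $\det H_F\not\equiv 0$, which is what upgrades $c\in\C$ to $c\in\Cstar$ and also what makes the degree-matching argument legitimate, since comparing degrees of two nonzero homogeneous polynomials is only valid when neither vanishes identically). Both are in the statement, so the proof is short; no genuine obstacle remains beyond citing the machinery correctly.
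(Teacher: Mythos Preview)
Your proof is correct and is precisely the argument the paper intends: it states the corollary as an immediate consequence of Lemma \ref{lem:relhess} (giving that $\det H_F$ is a relative invariant) together with Proposition \ref{prop:therelative} (forcing the shape $cF^m$), with regularity ensuring $c\neq 0$ and a degree count fixing $m=n(r-2)/r$. Your digression through Proposition \ref{identity1} is, as you observed, a dead end here; the clean route is exactly the three-step outline you settled on.
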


\subsection{Strict log-concavity of homogeneous polynomials}
Let $F$ be a homogeneous polynomial of degree $r$ in $n$ variables with real coefficients, where $r\geq3$. 
Here we consider log-concavity of $F$. 
For a symmetric matrix $A$, $A\succeq0$ denotes that $A$ is positive semi-definite, and $A\succ0$ denotes that $A$ is positive definite. 
Now we define strict log-concavity. 

\begin{definition}[(strict) log-concavity]\hspace{2pt}\\
We say that $F$ is {\it log-concave} (resp.\ {\it strictly log-concave}) at $\bm{a}\in\R^n$ if 
\[(-FH_F+(\nabla F)^T(\nabla F))|_{\bm{x}=\bm{a}}\succeq 0 \  (\text{resp.\ } \ \succ0). \]
\end{definition}

For technical reasons, we introduce strict ``homogeneous" log-concavity which is stronger than strict log-concavity.  
We will not, however, use this notion essentially until the final section; therefore it is not a problem to replace (strict) homogeneous log-concavity with (strict) log-concavity until then.

\begin{definition}[(strict) homogeneous log-concavity]\hspace{2pt}\\
We say that $F$ is {\it homogenenous log-concave} (resp.\ {\it strictly homogenenous log-concave}) at $\bm{a}\in\R^n$ if for any $s\geq\fr<r-1/r>$ (resp. $s>\fr<r-1/r>$), 
\[(-FH_F+s(\nabla F)^T(\nabla F))|_{\bm{x}=\bm{a}}\succeq 0 \  (\text{resp.\ } \ \succ0).\]
\end{definition}

As remarked in \cite[Example 1.11.2]{MR2683227}, 
$F$ is (strictly) homogeneous log-concave at $\bm{a}\in\R^n$ if and only if $F^{\fr<1/k>}$ is log-concave at $\bm{a}$ for any $k>r$. 

Clearly, strict (homogeneous) log-concavity implies (homogeneous) log-concavity.  

{From here, we assume that $F$ is a homogeneous polynomial with positive coefficients.} %multi-affine polynomial with positive coefficients, where a {\it multi-affine polynomial} is a sum of square-free monomials. 
One of the important properties of strictly log-concave homogeneous polynomial $F$ with positive coefficients is that its Hessian $H_F$ is non-degenerate, moreover it has only one positive eigenvalue. To prove this, we note Cauchy's interlacing theorem.  

\begin{theorem}[Cauchy's interlacing Theorem {\cite[Corollary 4.3.9]{MR2978290}}]\label{thm:Cauchy}\hspace{2pt}\\
For a real symmetric $n\times n$ matrix $A$ with eigenvalues $\alpha_1\geq\cdots\geq\alpha_n$ and a vector $\bm{v}\in\R^n$, the eigenvalues $\alpha_1\geq\cdots\geq\alpha_n$ interlace the eigenvalues $\beta_1\geq\cdots\geq\beta_n$ of $B:=A+\bm{v}\bm{v}^T$. 
That is, 
\[\beta_1\geq\alpha_1\geq\beta_2\geq\cdots\geq\alpha_{n-1}\geq\beta_n\geq\alpha_n.\] 
\end{theorem}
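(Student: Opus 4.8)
The plan is to derive the interlacing chain from the Courant--Fischer min--max characterization of eigenvalues of a real symmetric matrix, which I regard as available. Recall that if $M$ is symmetric $n\times n$ with eigenvalues $\mu_1\geq\cdots\geq\mu_n$, then for each $k$
\[\mu_k=\max_{\dim U=k}\ \min_{0\neq x\in U}\frac{x^TMx}{x^Tx}=\min_{\dim U=n-k+1}\ \max_{0\neq x\in U}\frac{x^TMx}{x^Tx},\]
the extrema running over linear subspaces $U\subseteq\R^n$ of the stated dimension. It then suffices to establish the two families $\beta_k\geq\alpha_k$ for all $k$ and $\alpha_{k-1}\geq\beta_k$ for $2\leq k\leq n$; assembling them yields exactly $\beta_1\geq\alpha_1\geq\beta_2\geq\cdots\geq\beta_n\geq\alpha_n$.

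First I would dispose of the easy family. Since $B-A=\bm{v}\bm{v}^T\succeq0$, we have $x^TBx\geq x^TAx$ for every $x$; feeding this into the max--min formula (the inner minimum is monotone in the quadratic form) gives $\beta_k\geq\alpha_k$ for every $k$.

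The substance is the second family. Fix $k$ with $2\leq k\leq n$, and let $W\subseteq\R^n$ be the span of an orthonormal family of eigenvectors of $A$ for the eigenvalues $\alpha_{k-1},\alpha_k,\ldots,\alpha_n$, so that $\dim W=n-k+2$ and $x^TAx\leq\alpha_{k-1}x^Tx$ for all $x\in W$. Intersecting with the hyperplane $\bm{v}^\perp$, the subspace $U:=W\cap\bm{v}^\perp$ has dimension at least $n-k+1$, so it contains a subspace $U'$ of dimension exactly $n-k+1$. For $x\in U'$ one has $\bm{v}^Tx=0$, whence $x^TBx=x^TAx+(\bm{v}^Tx)^2=x^TAx\leq\alpha_{k-1}x^Tx$; plugging $U'$ into the min--max formula for $\beta_k$ gives $\beta_k\leq\alpha_{k-1}$, as desired.

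The only delicate point is the dimension bookkeeping for $W\cap\bm{v}^\perp$ together with the choice of an eigenbasis when $A$ has repeated eigenvalues, plus the trivial case $\bm{v}=0$ where $B=A$ and there is nothing to prove. I would also mention the alternative secular-equation proof: when $A$ has simple spectrum one factors $\det(\lambda I-B)=\det(\lambda I-A)\bigl(1-\bm{v}^T(\lambda I-A)^{-1}\bm{v}\bigr)$ and reads the interlacing off the poles and monotonicity of the scalar rational function $\lambda\mapsto\bm{v}^T(\lambda I-A)^{-1}\bm{v}$, the general case following by perturbing $A$ slightly and passing to the limit; but the min--max argument above is cleaner and handles all cases uniformly, so that is the route I would present.
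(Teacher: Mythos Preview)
Your argument via the Courant--Fischer min--max principle is correct and is one of the standard routes to this result. The paper, however, does not supply its own proof of this theorem: it simply quotes it as \cite[Corollary~4.3.9]{MR2978290} (Horn--Johnson, \emph{Matrix Analysis}) and uses it as a black box in the proof of the subsequent corollary. So there is no proof in the paper to compare against; your write-up would stand as an independent, self-contained justification where the paper relies on a citation.
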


\begin{corollary}\label{cor:eigen}
{Let $F$ be a homogeneous polynomial with positive coefficients. } If $F$ is strictly log-concave at $\bm{a}\in(\R_{>0})^n$, 
then $H_F|_{\bm{x}=\bm{a}}$ has exactly $n-1$ negative eigenvalues and exactly one positive eigenvalue. 
In particular, 
\begin{align*}
(-1)^{n-1}(\det H_F)|_{\bm{x}=\bm{a}}>0. 
\end{align*}
\end{corollary}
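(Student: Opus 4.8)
The plan is to combine the strict log-concavity hypothesis with Cauchy's interlacing theorem (Theorem~\ref{thm:Cauchy}) to control the signature of $H_F$. Write $A := -(FH_F)|_{\bm{x}=\bm{a}}$ and $\bm{v} := (\nabla F)^T|_{\bm{x}=\bm{a}}$, so that $B := A + \bm{v}\bm{v}^T = (-FH_F + (\nabla F)^T(\nabla F))|_{\bm{x}=\bm{a}}$. The hypothesis says $B \succ 0$, so all eigenvalues $\beta_1 \geq \cdots \geq \beta_n$ of $B$ are strictly positive. Since $F$ has positive coefficients and $\bm{a} \in (\R_{>0})^n$, we have $F(\bm{a}) > 0$; hence the eigenvalues of $A$ are exactly $-F(\bm{a})$ times those of $H_F|_{\bm{x}=\bm{a}}$, and it suffices to show $A$ has exactly one negative eigenvalue and $n-1$ positive ones, which (dividing by the positive scalar $-F(\bm{a})$, note the sign flip) gives $H_F|_{\bm{x}=\bm{a}}$ exactly one positive and $n-1$ negative eigenvalues.

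First I would apply the interlacing inequalities $\beta_1 \geq \alpha_1 \geq \beta_2 \geq \alpha_2 \geq \cdots \geq \beta_n \geq \alpha_n$, where $\alpha_1 \geq \cdots \geq \alpha_n$ are the eigenvalues of $A$. From $\beta_i > 0$ for all $i$ and $\beta_i \geq \alpha_i$ we cannot yet conclude positivity of the $\alpha_i$; instead I use the lower bounds $\alpha_i \geq \beta_{i+1}$ for $i = 1, \ldots, n-1$, which forces $\alpha_1, \ldots, \alpha_{n-1} \geq \beta_n > 0$. So $A$ has at least $n-1$ strictly positive eigenvalues. It remains to show $\alpha_n < 0$, i.e.\ that $A$ is not positive semi-definite. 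For this I would invoke Euler's identity (Lemma~\ref{lem:hom2}): $\bm{a}^T H_F|_{\bm{x}=\bm{a}}\,\bm{a} = r(r-1)F(\bm{a}) > 0$, so $\bm{a}^T A \bm{a} = -F(\bm{a}) \cdot r(r-1) F(\bm{a}) < 0$, exhibiting a vector on which the quadratic form of $A$ is negative. Hence $\alpha_n < 0$, and therefore $A$ has exactly one negative eigenvalue. Translating back, $H_F|_{\bm{x}=\bm{a}}$ has exactly one positive eigenvalue and exactly $n-1$ negative eigenvalues; in particular it is non-degenerate, and $\det H_F|_{\bm{x}=\bm{a}}$ has sign $(-1)^{n-1}$, which gives $(-1)^{n-1}(\det H_F)|_{\bm{x}=\bm{a}} > 0$.

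The only subtle point — and the step I would be most careful about — is checking that the interlacing theorem is being applied in the correct direction and that the scalar $-F(\bm{a}) < 0$ reverses signs consistently when passing between $A = -F(\bm{a})\,H_F|_{\bm{x}=\bm{a}}$ and $H_F|_{\bm{x}=\bm{a}}$; a sign slip here would swap the roles of the positive and negative eigenvalue counts. Everything else is routine: the positivity $F(\bm{a}) > 0$ is immediate from positivity of coefficients, and Euler's identity is already available. I do not expect any genuine obstacle; the argument is a short spectral bookkeeping once Theorem~\ref{thm:Cauchy} and Lemma~\ref{lem:hom2} are in hand.
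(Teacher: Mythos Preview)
Your proof is correct and follows essentially the same approach as the paper: both set $A=-(FH_F)|_{\bm{x}=\bm{a}}$, use Cauchy interlacing against the positive-definite $B=A+\bm{v}\bm{v}^T$ to force $\alpha_1,\ldots,\alpha_{n-1}>0$, and then show $\alpha_n<0$ separately. The only difference is in that last step: the paper argues via the trace, observing $\tr A=-F(\bm{a})\sum_i\frac{\partial^2 F}{\partial x_i^2}(\bm{a})\leq 0$ (using positivity of coefficients), which together with $\alpha_1,\ldots,\alpha_{n-1}>0$ forces $\alpha_n<0$; you instead exhibit the explicit vector $\bm{a}$ with $\bm{a}^TA\bm{a}=-r(r-1)F(\bm{a})^2<0$ via Euler's identity. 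Both are valid; your route is arguably a touch more robust since it only needs $F(\bm{a})>0$ and homogeneity of degree $r\geq 2$.
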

\begin{proof}
We set $A=\left.\left(-FH_F\right)\right|_{\bm{x}=\bm{a}}$ and $B=(-FH_{F}+(\nabla F)^T(\nabla F))|_{\bm{x}=\bm{a}}$, and denote their eigenvalues as $\alpha_1\geq\cdots\geq\alpha_n$ and $\beta_1\geq\cdots\geq\beta_n$ respectively. 
Since $F$ is strictly log-concave at $\bm{a}\in(\R_{>0})^n$, we have $\beta_n>0$. 
Hence it follows from Cauchy's interlacing theorem that eigenvalues $\alpha_1, \ldots, \alpha_{n-1}$ are positive. 
On the other hand, {we have $\tr A=\sum_{i=1}^n{\alpha_i}=-\left. \left(F\sum_{i=1}^n{\fr<\partial^2 F/\partial x_i^2>}\right) \right|_{\bm{x}=\bm{a}}\leq0$.} Thus, $\alpha_n$ should be negative. 
Hence $\left.\left(-FH_F\right)\right|_{\bm{x}=\bm{a}}$ has exactly $n-1$ positive eigenvalues and exactly one negative eigenvalue. 
Since $F$ is a polynomial with positive coefficients, we have $F(\bm{a})>0$ for any point $\bm{a}\in(\R_{>0})^n$. 
Therefore $H_{F}|_{\bm{x}=\bm{a}}$ has exactly $n-1$ negative eigenvalues and exactly one positive eigenvalue. 
\end{proof}

For $F$, we define
\begin{align*}
F_0&=F|_{x_k=0}\in\R[x_1, \ldots, \hat{x_k}, \ldots ,x_N], \\
F_k&=\fr<\partial F/\partial x_k>\in\R[x_1, \ldots, \hat{x_k}, \ldots, x_N]. 
\end{align*}
Note that $F=F_0+x_kF_k$ in this case. 

The following lemma looks rather technical, however this gives a relationship between (strict) homogeneous log-concavity of $F$ and (strict) homogeneous log-concavity of $F_{0}$ and $F_{k}$. 

\begin{lemma}\label{cor:equivalent}
If $F_0(a_1, \ldots, \hat{a_k}, \ldots ,a_N)\neq0$ and $F_k(a_1, \ldots, \hat{a_k}, \ldots ,a_N)\neq0$ for $\bm{a}\in\R_{\geq0}^N$, then the following are equivalent for any $s\geq\fr<r-1/r>$ (resp.\ $s>\fr<r-1/r>$). 
\begin{enumerate}
\item[(i)] $(-FH_F+s(\nabla F)^T(\nabla F))|_{\bm{x}=\bm{a}}\succeq0$ (resp.\ $\succ0$).
\vspace{4pt}
\item[(ii)] $\left.\left(\begin{array}{l}sx_kF_0F_k\left(-F_kH_{F_k}+\fr<2s-1/s>(\nabla F_k)^T\nabla F_k\right)\\
+sF_k^2(-F_0H_{F_0}+s(\nabla F_0)^T\nabla F_0)\\
-(sF_k\nabla F_0-F_0\nabla F_k)^T(sF_k\nabla F_0-F_0\nabla F_k)\end{array}\right)\right|_{\bm{x}=\bm{a}}\succeq0 \ (\text{resp.\ } \succ 0)$.
\end{enumerate}
\end{lemma}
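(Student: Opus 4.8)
The plan is to reduce the equivalence of (i) and (ii) to a computation with $2\times 2$ block (Schur-complement) determinants, exploiting the decomposition $F = F_0 + x_k F_k$. First I would write out $\nabla F$ and $H_F$ in terms of $F_0, F_k$ and their derivatives: since $F_0$ does not involve $x_k$ and $F_k = \partial F/\partial x_k$ does not involve $x_k$ either, we get $\partial F/\partial x_j = \partial F_0/\partial x_j + x_k\,\partial F_k/\partial x_j$ for $j\neq k$, while $\partial F/\partial x_k = F_k$. Likewise the Hessian $H_F$ decomposes into a block indexed by $\{j\neq k\}$, equal to $H_{F_0} + x_k H_{F_k}$, together with a $k$-th row and column built from $\nabla F_k$ and (in the $(k,k)$ entry) $0$. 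Substituting these into $-FH_F + s(\nabla F)^T \nabla F$ gives an $N\times N$ symmetric matrix $S$ written in $1{+}(N-1)$ block form, with $(k,k)$-entry a scalar and off-diagonal a vector.

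Next I would apply the standard fact that for a symmetric block matrix $\begin{pmatrix} p & \bm{w}^T \\ \bm{w} & C\end{pmatrix}$ with $p>0$, positive (semi-)definiteness is equivalent to $p\,C - \bm{w}\bm{w}^T \succeq 0$ (resp.\ $\succ 0$), and symmetrically if the scalar block sits in a different position. Here the natural scalar to pull out is the $(k,k)$-entry of $S$, which evaluates at $\bm{x}=\bm{a}$ to $-F(\bm{a})\cdot 0 + s F_k(\bm{a})^2 = s F_k(\bm{a})^2 > 0$ by the hypothesis $F_k(\bm{a})\neq 0$ (note $s>0$ since $s\geq (r-1)/r>0$). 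Taking the Schur complement with respect to this entry turns (i) into positive (semi-)definiteness of $sF_k^2\cdot(\text{lower-right block of }S) - (\text{the }k\text{-th column of }S\text{ without its diagonal})(\cdots)^T$, all at $\bm{x}=\bm{a}$. Then it remains to massage this $(N-1)\times(N-1)$ expression, using $F = F_0 + a_k F_k$ and Euler's identity (Lemma \ref{lem:hom2}) applied to $F_0$ and to $F_k$ (which are homogeneous of degrees $r$ and $r-1$), until it matches the displayed expression in (ii). The appearance of the coefficient $\tfrac{2s-1}{s}$ multiplying $(\nabla F_k)^T\nabla F_k$ should fall out of grouping the $x_k F_k\cdot(\text{Hessian of }F_k)$ terms against the cross terms $sF_k\cdot(\text{something})\cdot(\nabla F_k)$ coming from the rank-one correction; the coefficient is exactly what is needed to complete the square into $-(sF_k\nabla F_0 - F_0\nabla F_k)^T(sF_k\nabla F_0 - F_0\nabla F_k)$.

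I expect the main obstacle to be purely bookkeeping: correctly tracking how the $x_k$-dependence (in particular the substitution $x_k \mapsto a_k$ only at the very end) interacts with the Hessian blocks, and verifying that every cross term regroups precisely into the three summands of (ii) with the stated coefficients, without spurious leftover terms. A secondary subtlety is that the Schur-complement equivalence should be invoked in the strict case requiring $p>0$ strictly (which we have) and in the semidefinite case requiring only $p\geq 0$ together with a standard continuity/limiting argument; but since $sF_k(\bm{a})^2>0$ holds under our hypotheses for all relevant $s$, both directions of the equivalence are clean. No deep input beyond Lemma \ref{lem:hom1}, Lemma \ref{lem:hom2}, and elementary linear algebra is needed; the content is the algebraic identity, and I would organize the write-up as: (1) block decomposition of $-FH_F + s(\nabla F)^T\nabla F$; (2) Schur complement on the $(k,k)$-entry; (3) substitution of $F = F_0 + x_k F_k$ and Euler's identities; (4) regrouping into the form of (ii).
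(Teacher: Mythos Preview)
Your approach is essentially the paper's own: the paper writes $\tilde{\bm{y}}^T\bigl(-FH_F+s(\nabla F)^T\nabla F\bigr)\tilde{\bm{y}}$ as a quadratic $\alpha y_1^2+2\beta y_1+\gamma$ in the $k$-th coordinate, observes $\alpha=sF_k^2>0$, and reduces (i) to $\alpha\gamma-\beta^2>0$ for all $\bm{y}\neq0$, which is exactly your Schur-complement step. One small correction: neither Lemma~\ref{lem:hom1} nor Euler's identity (Lemma~\ref{lem:hom2}) is actually needed in the regrouping; the paper obtains (ii) by expanding $\alpha\gamma-\beta^2$, dividing through by $F=F_0+x_kF_k$, and then completing the square, so you can drop step~(3) of your outline.
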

\begin{proof}
For conciseness, we will omit {$|_{\bm{x}=\bm{a}}$}. 
We show the equivalence for only positive definiteness (the argument is similar for positive semi-definiteness). We may assume $k=1$. Since we have $F=F_0+x_1F_1$, we can compute $-FH_F+s(\nabla F)^T(\nabla F)$ as follows. 
Here, note that the $(1, 1)$-component of the Hessian matrix of $F$ is 0 since $F$ is multi-affine. 
\begin{multline*}
-FH_F+s(\nabla F)^T(\nabla F)
=-F
\left(
\begin{array}{c|ccc}
0&&\nabla F_1&\\\cline{1-4}
&&&\\
(\nabla F_1)^T&&H_{F_0}+x_1H_{F_1}&\\
&&&
\end{array}
\right) \\
+s
\left(
\begin{array}{c|ccc}
F_1^2&&F_1(\nabla F_0+x_1\nabla F_1)^T&\\\cline{1-4}
&&&\\
F_1(\nabla F_0+x_1\nabla F_1)^T&&(\nabla F_0+x_1\nabla F_1)^T(\nabla F_0+x_1\nabla F_1)&\\
&&&
\end{array}
\right). 
\end{multline*}
Then, for any $\tilde{\bm{y}}=\left(\begin{array}{c|ccc}y_1&&\bm{y}&\end{array}\right)^T\in\R^N\setminus\{\bm{0}\}$, we have
\begin{equation*}
\begin{split}
\tilde{\bm{y}}^T(-F&H_F+s(\nabla F)^T(\nabla F))\tilde{\bm{y}}\\
&=-F\{2y_1(\nabla F_1\bm{y})+\bm{y}^TH_{F_0}\bm{y}+x_1(\bm{y}^TH_{F_1}\bm{y})\}\\
&\quad\quad+s\{F_1^2y_1^2+2y_1F_1(\nabla F_0\bm{y}+x_1(\nabla F_1\bm{y}))+(\nabla F_0\bm{y}+x_1(\nabla F_1\bm{y}))^2\}\\
&=(sF_1^2)y_1^2+2\left\{-F(\nabla F_1\bm{y})+sF_1(\nabla F_0\bm{y}+x_1(\nabla F_1\bm{y}))\right\}y_1\\
&\quad\quad+\bm{y}^T\left(-F(H_{F_0}+x_1H_{F_1})+s(\nabla F_0+x_1\nabla F_1)^T(\nabla F_0+x_1\nabla F_1)\right)\bm{y}.
\end{split}
\end{equation*}
If $\bm{y}=0$, then $y_1\neq0$, so $\tilde{\bm{y}}^T(-FH_F+s(\nabla F)^T(\nabla F))\tilde{\bm{y}}=(sF_1^2)y_1^2>0$. Thus, $(-FH_F+s(\nabla F)^T(\nabla F))$ is positive definite if and only if for any $y_1\in\R$ and $\bm{y}\neq\bm{0}$, $\alpha y_1^2+2\beta y_1+\gamma>0$, where 
\begin{align*}
\alpha&=sF_1^2>0, \\
\beta&=-F(\nabla F_1\bm{y})+sF_1(\nabla F_0\bm{y}+x_1(\nabla F_1\bm{y})),\\
\gamma&=\bm{y}^T\left(-F(H_{F_0}+x_1H_{F_1})+s(\nabla F_0+x_1\nabla F_1)^T(\nabla F_0+x_1\nabla F_1)\right)\bm{y}.
\end{align*}
Since this is equivalent to $\alpha\gamma-\beta^2>0$ for any $\bm{y}\in\R^{n-1}\setminus{\bm{0}}$, then we have  
\begin{align*}
\begin{split}
0 < &
(sF_1^2)\{-F(\bm{y}^TH_{F_0}\bm{y}+x_1\bm{y}^TH_{F_1}\bm{y})+s(\nabla F_0\bm{y}+x_1\nabla F_1\bm{y})^2\}\\
&\hspace{10em}-\{-F(\nabla F_1\bm{y})+sF_1(\nabla F_0\bm{y}+x_1(\nabla F_1\bm{y}))\}^2\\
&=-(sF_1^2)F(\bm{y}^TH_{F_0}\bm{y}+x_1\bm{y}^TH_{F_1}\bm{y})-F^2(\nabla F_1\bm{y})^2\\
&\hspace{10em}+2sF_1F(\nabla F_1\bm{y})(\nabla F_0\bm{y}+x_1(\nabla F_1\bm{y})).
\end{split}
\end{align*}
Dividing both sides by $F$, we have 
\begin{align*}
\begin{split}
0 < & 
-(sF_1^2)(\bm{y}^TH_{F_0}\bm{y}+x_1\bm{y}^TH_{F_1}\bm{y})-(F_0+x_1F_1)(\nabla F_1\bm{y})^2\\
&\hspace{10em}+2sF_1(\nabla F_1\bm{y})(\nabla F_0\bm{y}+x_1(\nabla F_1\bm{y}))\\
&=sx_1F_1\left\{-F_1(\bm{y}^TH_{F_1}\bm{y})+\fr<2s-1/s>(\nabla F_1\bm{y})^2\right\}\\
&\hspace{8em}+\left\{-sF_1^2(\bm{y}^TH_{F_0}\bm{y})-F_0(\nabla F_1\bm{y})^2+2sF_1(\nabla F_1\bm{y})(\nabla F_0\bm{y})\right\}\\
&=sx_1F_1\left\{-F_1(\bm{y}^TH_{F_1}\bm{y})+\fr<2s-1/s>(\nabla F_1\bm{y})^2\right\}\\
&\hspace{10em}+\fr<sF_1^2/F_0>\left\{-F_0(\bm{y}^TH_{F_0}\bm{y})+s(\nabla F_0\bm{y})(\nabla F_0\bm{y})\right\}\\
&\hspace{10em}-\fr<s^2F_1^2/F_0>(\nabla F_0\bm{y})^2-F_0(\nabla F_1\bm{y})^2+2sF_1(\nabla F_1\bm{y})(\nabla F_0\bm{y})\\
&=sx_1F_1\left\{-F_1(\bm{y}^TH_{F_1}\bm{y})+\fr<2s-1/s>(\nabla F_1\bm{y})^2\right\}\\
&\hspace{10em}+\fr<sF_1^2/F_0>\left\{-F_0(\bm{y}^TH_{F_0}\bm{y})+s(\nabla F_0\bm{y})(\nabla F_0\bm{y})\right\}\\
&\hspace{10em}-\fr<1/F_0>\left\{sF_1(\nabla F_0\bm{y})-F_0(\nabla F_1\bm{y})\right\}^2\\
&=\fr<1/F_0>\bm{y}^T\left(\begin{array}{l}sx_1F_0F_1\left(-F_1H_{F_1}+\fr<2s-1/s>(\nabla F_1)^T\nabla F_1\right)\\
+sF_1^2(-F_0H_{F_0}+s(\nabla F_0)^T\nabla F_0)\\
-(sF_1\nabla F_0-F_0\nabla F_1)^T(sF_1\nabla F_0-F_0\nabla F_1)\end{array}\right)\bm{y}. 
\end{split}
\end{align*}
After multiplying both sides by $F_0$, we complete the proof of the equivalence of (i) and (ii).
\end{proof}

By Lemma \ref{cor:equivalent}, we prove the following which is important in the proof of our main theorem (Theorem \ref{cor:matroidver}). 

\begin{corollary}\label{cor:van}
Let $F\in\R[x_1, \ldots, x_N]$ be a multi-affine homogeneous polynomial of $\deg F=r\geq3$ with positive coefficients. 
For a subset $I$ of $[N]$ and $0\leq k \leq N$, we define
\begin{align*}
C^{N-k}_{I>0}=\{(z_{k+1}, \ldots, z_N)\in\R^{N-k}_{\geq0} \ |  \ z_j\geq0 \ (j\notin I), \ z_i>0 \ (i\in I)\}. 
\end{align*}
We assume that $F$ is strictly homogeneous log-concave on $C^N_{I>0}$. 
If  
\begin{equation}
\fr<\partial F/\partial x_{1}>\neq0, \fr<\partial F|_{x_{1}=0}/\partial x_{2}>\neq0, \ldots ,\fr<\partial F|_{x_{1}=\cdots=x_{{k-1}}=0}/\partial x_{k}>\neq0
\end{equation}
holds as a polynomial for some $0\leq k\leq N-r$, then $F|_{x_{1}=\cdots=x_{k}=0}\in\R[x_{k+1}, \ldots, x_N]$ is strictly homogeneous log-concave on $C^{N-k}_{I>0}$. 
\end{corollary}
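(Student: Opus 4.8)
The plan is to induct on $k$, reducing the problem at each step to the case $k=1$ and then applying Lemma~\ref{cor:equivalent} directly. For the base case $k=0$ there is nothing to prove, since $F|_{\emptyset} = F$ is strictly homogeneous log-concave on $C^N_{I>0}$ by hypothesis. So suppose the statement holds for $k-1$; I would first apply the inductive hypothesis to the chain $\partial F/\partial x_1 \neq 0, \ldots, \partial F|_{x_1=\cdots=x_{k-2}=0}/\partial x_{k-1}\neq 0$ to conclude that $G := F|_{x_1=\cdots=x_{k-1}=0} \in \R[x_k, \ldots, x_N]$ is strictly homogeneous log-concave on (the appropriate coordinate slice of) $C_{I>0}$. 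Note $G$ is still a multi-affine homogeneous polynomial of degree $r$ with positive coefficients, and we are given $\partial G/\partial x_k = \partial F|_{x_1=\cdots=x_{k-1}=0}/\partial x_k \neq 0$. So it remains to show: if $G\in\R[x_k,\ldots,x_N]$ is multi-affine homogeneous of degree $r\geq 3$ with positive coefficients, strictly homogeneous log-concave on the positive-type region, and $\partial G/\partial x_k \neq 0$, then $G|_{x_k=0}$ is strictly homogeneous log-concave on the corresponding region. This is exactly the $k=1$ case, so it suffices to treat that.

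For the $k=1$ case, write $G = G_0 + x_k G_k$ with $G_0 = G|_{x_k=0}$ and $G_k = \partial G/\partial x_k$, in the notation preceding Lemma~\ref{cor:equivalent}. Fix a point $\bm{a}' = (a_{k+1},\ldots,a_N) \in C^{N-k}_{I>0}$ (so all coordinates are nonnegative, those indexed by $I$ strictly positive) and fix $s > \frac{r-1}{r}$. I want to show $(-G_0 H_{G_0} + s(\nabla G_0)^T \nabla G_0)|_{\bm{x}=\bm{a}'} \succ 0$. The idea is to apply Lemma~\ref{cor:equivalent} at the point $\bm{a} = (0, a_{k+1}, \ldots, a_N) \in \R^{N-k+1}_{\geq 0}$, i.e. with $x_k = 0$ plugged in. At this point $x_k F_0 F_k$ (with $F = G$, $F_0 = G_0$, $F_k = G_k$ in the lemma's notation) vanishes because the first coordinate is zero, so condition (ii) of Lemma~\ref{cor:equivalent} collapses to
\[
\left.\left(s G_k^2\bigl(-G_0 H_{G_0} + s(\nabla G_0)^T\nabla G_0\bigr) - (sG_k\nabla G_0 - G_0\nabla G_k)^T(sG_k\nabla G_0 - G_0\nabla G_k)\right)\right|_{\bm{x}=\bm{a}} \succeq 0,
\]
which holds because $G$ is (strictly homogeneous) log-concave at $\bm{a}$ — here I need that $(0,\bm{a}')$ lies in the region on which $G$ is strictly homogeneous log-concave, which it does since that region $C^{N-k+1}_{I>0}$ allows the $x_k$-coordinate to be $0$ (as $k \notin I$, because $I \subseteq [N]$ indexes the surviving variables after we delete $x_1,\ldots,x_k$; more carefully, one should check the indexing conventions, but the point is that setting the first several coordinates to zero stays inside the closed-region hypothesis). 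The hypotheses $G_0(\bm{a}')\neq 0$ and $G_k(\bm{a}')\neq 0$ needed to invoke Lemma~\ref{cor:equivalent} follow since $G_0, G_k$ have positive coefficients and $\bm{a}'$ is in the positive orthant up to zeros — but here is a subtlety: we need $G_0$ and $G_k$ to actually be nonzero \emph{polynomials}; $G_k \neq 0$ is the given hypothesis, and $G_0 \neq 0$ follows because $\deg G = r \leq N - k$ forces $G$ to involve variables other than $x_k$ (a degree-$r$ multi-affine monomial uses $r$ distinct variables, and $r \leq N-k$ leaves room), so setting $x_k=0$ cannot kill everything. Then evaluating positivity of $G_0,G_k$ at the specific point $\bm{a}'\in C^{N-k}_{I>0}$: since these are nonzero polynomials with nonnegative coefficients, they might still vanish at a boundary point of the orthant; this is where the region $C^{N-k}_{I>0}$ and the indexing of $I$ must be used to guarantee non-vanishing — I would argue that strict log-concavity of $G$ already forces $\nabla G(\bm{a})\neq 0$ hence propagates, or simply restrict attention to interior points and use density/closedness of positive semidefiniteness.

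From the displayed inequality, since $s G_k^2 > 0$ at $\bm{a}'$ (as $G_k(\bm{a}')\neq 0$), and since the subtracted term $(sG_k\nabla G_0 - G_0\nabla G_k)^T(\cdots)$ is positive semidefinite, positive semidefiniteness of the whole expression gives $-G_0 H_{G_0} + s(\nabla G_0)^T\nabla G_0 \succeq \frac{1}{sG_k^2}(sG_k\nabla G_0 - G_0\nabla G_k)^T(sG_k\nabla G_0 - G_0\nabla G_k) \succeq 0$ at $\bm{a}'$. But I need strict positive-definiteness, not just semidefiniteness. For this I would use that $G$ is \emph{strictly} homogeneous log-concave at $\bm{a}$ for \emph{all} $s > \frac{r-1}{r}$: picking two values $s_1 < s_2$ both exceeding $\frac{r-1}{r}$ and using the strict inequality in Lemma~\ref{cor:equivalent}(ii) at one of them, one gets that the relevant quadratic form is strictly positive on $\bm{y}\neq 0$, which after the $x_k=0$ specialization yields $\bm{y}^T(-G_0 H_{G_0} + s(\nabla G_0)^T\nabla G_0)\bm{y} > 0$ for all $\bm{y}\neq 0$. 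Concretely: apply Lemma~\ref{cor:equivalent}'s equivalence (i)$\iff$(ii) for $G$ at $\bm{a}=(0,\bm{a}')$ with the \emph{strict} versions; since (ii) holds strictly (it reduces to $sG_k^2(-G_0H_{G_0}+s(\nabla G_0)^T\nabla G_0) - (sG_k\nabla G_0 - G_0\nabla G_k)^T(\cdots) \succ 0$, which follows from strict homogeneous log-concavity of $G$ at $\bm{a}$ via the lemma's (i)$\Rightarrow$(ii) with $x_kF_0F_k=0$), we want to go \emph{backwards}: we want strict positivity of the bracketed term involving only $G_0$.

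I expect the main obstacle to be precisely this last point — extracting strict positive-definiteness of $-G_0 H_{G_0} + s(\nabla G_0)^T\nabla G_0$ from the combined inequality, because the term $sG_k^2(-G_0H_{G_0}+s(\nabla G_0)^T\nabla G_0)$ appears added to a \emph{negative} semidefinite term $-(sG_k\nabla G_0 - G_0\nabla G_k)^T(\cdots)$, so strict positivity of the sum only gives strict positivity of $-G_0H_{G_0}+s(\nabla G_0)^T\nabla G_0$ on the subspace where the subtracted form vanishes, plus semidefiniteness elsewhere. The resolution is to exploit the $s$-dependence: replace $s$ by a slightly larger $s' \in (\frac{r-1}{r}, s)$, apply the strict inequality (ii) for $G$ at parameter $s'$, and observe that $-G_0H_{G_0} + s(\nabla G_0)^T\nabla G_0 = (-G_0H_{G_0}+s'(\nabla G_0)^T\nabla G_0) + (s-s')(\nabla G_0)^T\nabla G_0$ where the first summand is $\succeq$ a rank-one correction from the $s'$-inequality and the second is positive semidefinite — and then chase which directions are not yet covered, using that $\nabla G_0(\bm{a}')\neq 0$ (which holds since $G_0$ is homogeneous of degree $r\geq 1$ with positive coefficients, so $\nabla G_0(\bm{a}')\cdot\bm{a}' = rG_0(\bm{a}')\neq 0$ by Euler, Lemma~\ref{lem:hom2}). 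Alternatively — and this is cleaner — I would apply Lemma~\ref{cor:equivalent} in the reverse direction: it is stated as an \emph{equivalence}, so strict positive-definiteness of the $\bm{y}$-quadratic form in (ii) at $\bm{a}=(0,\bm{a}')$, which is what strict homogeneous log-concavity of $G$ gives us, is equivalent to strict positive-definiteness of $(-G H_G + s(\nabla G)^T\nabla G)$ at $(0,\bm{a}')$; but I actually want the statement for $G_0$, not for $G$ with $x_k=0$ — and the key algebraic fact is that $(-GH_G + s(\nabla G)^T\nabla G)|_{x_k=0}$ differs from a block-diagonal matrix with corner block $-G_0H_{G_0}+s(\nabla G_0)^T\nabla G_0$ by terms that, at $x_k=0$, simplify (using $G=G_0$ there and the multi-affine structure). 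So the honest plan is: compute $(-GH_G+s(\nabla G)^T\nabla G)\big|_{x_k=0}$ explicitly using $G = G_0 + x_k G_k$ and the fact that the $(k,k)$-entry of $H_G$ vanishes, show its restriction to the hyperplane $y_k = 0$ equals $-G_0H_{G_0}+s(\nabla G_0)^T\nabla G_0$ (plus controllable terms), and conclude strict positive-definiteness of the latter from strict positive-definiteness of the former on that hyperplane. This is the computational heart and the place where the hypothesis $k \leq N-r$ (ensuring $G_0\neq 0$, equivalently $\deg G_0 = r$) gets used essentially; everything else is routine bookkeeping over the induction.
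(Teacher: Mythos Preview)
Your approach is exactly the paper's: induct on $k$, set $f = F|_{x_1=\cdots=x_{k-1}=0}$, and apply Lemma~\ref{cor:equivalent} at the point $(0,\bar{\bm z}) \in C^{N-k+1}_{I>0}$, so that the $x_kF_0F_k$ term in (ii) drops out. But you have manufactured a difficulty in the final step that is not there. Once (ii) of Lemma~\ref{cor:equivalent} holds \emph{strictly} at $(0,\bar{\bm z})$ --- and it does, since $f$ is strictly homogeneous log-concave there and the lemma is stated as an equivalence --- you have, writing $\bm v = sG_k\nabla G_0 - G_0\nabla G_k$,
\[
sG_k^2\bigl(-G_0H_{G_0}+s(\nabla G_0)^T\nabla G_0\bigr) - \bm v\bm v^T \succ 0.
\]
Your claim that this only forces strict positivity of $-G_0H_{G_0}+s(\nabla G_0)^T\nabla G_0$ on $\ker\bm v^T$ and mere semidefiniteness elsewhere is simply wrong: for any $\bm y\neq 0$ the display gives $sG_k^2\,\bm y^T(-G_0H_{G_0}+s(\nabla G_0)^T\nabla G_0)\bm y > (\bm v\cdot\bm y)^2 \geq 0$, and dividing by $sG_k^2>0$ yields strict positive definiteness immediately. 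This is precisely what the paper does in one line (``In particular\dots''). Your detours through varying $s$, Euler's identity, and block decompositions are all unnecessary.

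One point you raise that the paper does gloss over: Lemma~\ref{cor:equivalent} requires $G_0(\bar{\bm z})\neq 0$ and $G_k(\bar{\bm z})\neq 0$ at the specific point, not merely as polynomials, and for an arbitrary subset $I$ this is not automatic at boundary points of $C^{N-k}_{I>0}$. The paper's proof does not verify this either. In the intended application (Theorem~\ref{cor:matroidver}) the set $I$ is a basis $B_0$ of the matroid, and both $f_0$ and $f_k$ contain a monomial supported entirely on $B_0$ (this is what Lemma~\ref{lem:star} guarantees), so positivity on $C_{B_0>0}$ is immediate there.
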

\begin{proof}
We show this by induction on $k$. 
In the case where $k=0$, the claim is obvious by the assumption. 
For $1\leq k\leq N-r$, by the induction hypothesis, 
$F|_{x_{1}=\cdots=x_{{k-1}}=0}$ is strictly homogeneous log-concave on $C^{N-k+1}_{I>0}$. 
Let 
\begin{align*}
f=F|_{x_{1}=\cdots=x_{{k-1}}=0}\in\R[x_k, \ldots, x_N]. 
\end{align*}
Applying Lemma \ref{cor:equivalent} to $f$ and $\bm{a}=\left(\begin{array}{c|ccc}0&&\bm{\overline{z}}&\end{array}\right)^T\in C_{I>0}^{N-k+1}$ for any $\bm{\overline{z}}\in C_{I>0}^{N-k}$, 
we have 

\begin{align*}
\left.
\left(
\begin{array}{l}
sf_k^2(-f_0H_{f_0}+s(\nabla f_0)^T(\nabla f_0))\\
-(sf_k\nabla f_0-f_0\nabla f_k)^T(sf_k\nabla f_0-f_0\nabla f_k)
\end{array}
\right)
\right|
_{(x_{k+1}, \ldots, x_n)=\bm{\overline{z}}}
\succ 0,
\end{align*}
where $f_0:=F|_{x_{1}=\cdots=x_{{k}}=0}$, $f_k:=\fr<\partial F|_{x_{1}=\cdots=x_{{k-1}}=0}/\partial x_{k}>$.  
Note that by assumption, they are not identically zero as polynomials. 
In particular, for any $\bm{\overline{z}}\in C_{I>0}^{N-k}$, we have 
\[\left.\left(-f_0H_{f_0}+s(\nabla f_0)^T(\nabla f_0)\right)\right|_{(x_{k+1}, \ldots, x_N)=\bm{\overline{z}}}\succ0.\]
This completes the proof.  
\end{proof}

%%%%%%%%
\section{Matroids}\label{sec:Matroid}

In this section, we provide basic terms of a matroid. 
{The best general reference for matroid theory is \cite{MR2849819}.} 

\begin{definition}[Matroid]\label{def:basis}\hspace{2pt}\\
A {\it matroid} $M$ is an ordered pair $(E, \calB)$ consisting of a finite set $E$ and a collection $\calB$ of subsets of $E$ satisfying the following properties: 
\begin{itemize}
\item $\calB\neq\emptyset$.
\item If $B_1$ and $B_2$ are in $\calB$ and $x\in B_1\setminus B_2$, then there is an element $y\in B_2\setminus B_1$ such that $\{y\}\cup(B_1\setminus\{x\})\in\calB$. 
\end{itemize}
In this case, we call each $B\in\calB$ a {\it basis} of $M$. 
\end{definition}

\begin{example}[Graphic matroid]\label{Graphic matroid}\hspace{2pt}\\
For any finite graph $\Gamma=(V, E)$ with the vertex set $V$ and the edge set $E$, we call a subgraph $T\subseteq \Gamma$ a {\it spanning tree} in $\Gamma$ if $T$ does not contain any cycles and $T$ passes through all vertices of $\Gamma$. 
Let $\calB_\Gamma$ be the set of all spanning trees in $\Gamma$. Then $M(\Gamma)=(E, \calB_\Gamma)$ is a matroid. 
These matroids are called {\it graphic matroids}.  
\end{example}

\begin{remark}
If $M$ is a graphic matroid, then there exists a connected graph $\Gamma$ such that $M(\Gamma)$ is isomorphic to $M$.  
\end{remark}

\begin{example}[Submatroid]\hspace{2pt}\\
Let $M=(E, \calB)$ be a matroid. 
For $E'\subset E$, we define $\calB'$ by $\calB'=\Set{B\in\calB|B\subset E'}$. 
Then $M'=(E', \calB')$ is a matroid. 
We call $M'$ a submatroid of $M$. 
\end{example}

Let $M=(E, \calB)$ be a matroid. 
We call each subset of a basis of $M$ an \textit{independent set} of $M$ 
and call each subset of $E$, which is not contained in any basis, a \textit{dependent set} of $M$. 
A minimal dependent set of $M$ is called a \textit{circuit} of $M$. 
We say that $C$ is an $n$-circuit if $C$ is a circuit and $C$ has $n$ elements.  
In particular, we call each $1$-circuit a loop. 
We call an element $e$ a {\it coloop} of $M$ if $\{e\}$ is contained in each basis of $M$. 
We say that a matroid $M$ is \textit{simple} if there is neither a $1$-circuit nor $2$-circuit. 

We can directly prove the following from the definition of the basis. 

\begin{proposition}\label{the number of basis are the same}
Let $M$ be a matroid with the basis set $\calB$. 
If $B$ and $B'$ are basis of $M$, then the number of elements of them are the same. 
In other words, if $B, B'\in \calB$, then $|B|=|B'|$. 
\end{proposition}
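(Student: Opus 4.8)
The plan is to argue directly from the basis exchange axiom in Definition \ref{def:basis}, mimicking the standard proof that all bases of a matroid have the same cardinality. First I would suppose, for contradiction, that there exist bases $B, B' \in \calB$ with $|B| \neq |B'|$, and without loss of generality assume $|B| > |B'|$. Among all pairs of bases of unequal size, I would choose a pair $(B, B')$ with $|B \setminus B'|$ minimal (such a choice is possible since the set of such pairs is finite and nonempty). Since $|B| > |B'|$, we cannot have $B \subseteq B'$, so $B \setminus B'$ is nonempty; pick $x \in B \setminus B'$.

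Next I would apply the exchange axiom: there is $y \in B' \setminus B$ with $B'' := \{y\} \cup (B \setminus \{x\}) \in \calB$. Then $|B''| = |B|$, so $B''$ is a basis of the same size as $B$, hence still strictly larger than $B'$. The key point is to check that $|B'' \setminus B'| < |B \setminus B'|$: indeed, $B'' \setminus B' = (\{y\} \cup (B \setminus \{x\})) \setminus B'$; since $y \in B'$, the element $y$ contributes nothing, and removing $x$ (which lies in $B \setminus B'$) strictly decreases the count, so $B'' \setminus B' = (B \setminus B') \setminus \{x\}$, which has one fewer element. Thus $(B'', B')$ is a pair of bases of unequal size with smaller symmetric difference on the left, contradicting minimality. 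Therefore no such pair exists and $|B| = |B'|$ for all $B, B' \in \calB$.

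I would also note the degenerate case separately: if $|B| = 0$ for some basis, then $\calB = \{\emptyset\}$ (applying the exchange axiom shows no other basis can exist), so the statement is trivial there; the induction above only needs $|B| \geq 1$ for the chosen larger basis.

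The main obstacle — really the only subtlety — is the bookkeeping in the symmetric-difference count showing $|B'' \setminus B'| < |B \setminus B'|$, which relies crucially on $y \in B'$ and $x \notin B'$; everything else is a routine finite descent. No results beyond Definition \ref{def:basis} are needed.
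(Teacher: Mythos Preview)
Your argument is correct: the minimal-counterexample descent via the basis exchange axiom is the standard direct proof, and the bookkeeping for $|B'' \setminus B'| = |B \setminus B'| - 1$ is accurate. Note that the paper itself gives no proof at all---it simply asserts that the result follows directly from Definition~\ref{def:basis}---so your proposal is exactly the kind of argument the authors had in mind; your separate remark on the degenerate case $|B|=0$ is harmless but unnecessary, since the larger basis in your setup automatically has $|B|\geq 1$.
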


We say that a matroid $M$ has \textit{rank} $r$ if the number of elements of a basis of $M$ is $r$.  
The rank of $M$ is denoted by $\rank M$. 

\begin{definition}[Basis generating function]\label{def:basispoly}\hspace{2pt}\\
For any matroid $M=(E, \calB)$, we define the {\it basis generating function} $F_M(\bm{x})$ of $M$ by 
\[F_M(\bm{x})=\sum_{B\in\calB}{\prod_{i\in B}{x_i}}.\]
\end{definition}

By Definition and Proposition \ref{the number of basis are the same}, 
for a matroid $M=(E, \calB)$ of rank $r$, 
its basis generating function $F_M(\bm{x})$ is a multi-affine homogeneous polynomial of degree $r$ in $|E|$ variables with positive coefficients. 
Moreover, for any $e\in E$ which is not a loop or a coloop, we have 
\[F_M(\bm{x})=F_{M\setminus e}(\bm{x})+x_eF_{M/e}(\bm{x}),\]
where $M\setminus e$ (resp.\ $M/e$) is the deletion (resp.\ contraction) of $M$ with respect to $e$ (see \cite{MR2849819} for the definitions). 
In particular, if matroid $M_0$ is obtained by deleting some elements $e_1, \ldots, e_k\in E$ from $M$, 
then we have 
\begin{align*}
F_{M_0}=F_M|_{x_{e_1}=\cdots=x_{e_k}=0}. 
\end{align*}

Note that for any matroid $M$ on $[n]=\Set{1,2, \ldots, n}$, 
(not necessarily strict) homogeneous log-concavity of $F_M(\bm{x})$ on $\R_{\geq0}^n$ is already known in \cite[Theorem 4.2]{AGV2018} as stated below. 
Precisely speaking, they show log-concavity in their paper, however by carefully reading their proof, one can easily show homogeneous log-concavity of $F_M(\bm{x})$ on $\R_{\geq0}^n$.

\begin{theorem}[{\cite[Theorem 4.2]{AGV2018}}]\label{thm:AGVs}\hspace{2pt}\\
For any matroid $M$, $F_M(\bm{x})$ is homogeneous log-concave on $\R^n_{\geq0}$. 
In other words,  
\[{\left.\left(-F_MH_{F_M}+s(\nabla F_M)^T(\nabla F_M)\right)\right|_{\bm{x}=\bm{a}}}\succeq0\]
for any $\bm{a}\in\R_{\geq0}^n$ and $s\geq\fr<r-1/r>$.  
\end{theorem}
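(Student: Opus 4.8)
The plan is to deduce this from the theory of Lorentzian (equivalently, completely log-concave) polynomials, which is how \cite{AGV2018} establishes it; the parametrized ``homogeneous'' form is then a routine strengthening. Recall that a polynomial $p$ with nonnegative coefficients, homogeneous of degree $d$ in $n$ variables, is \emph{Lorentzian} if its support is $M$-convex and, for every multi-index $\alpha$ with $|\alpha|=d-2$, the quadratic polynomial $\partial^\alpha p$ has coefficient matrix with at most one positive eigenvalue; the central fact is that every Lorentzian polynomial is completely log-concave, in particular log-concave on $(\R_{>0})^n$, and (by \cite[Sections~5 and~7]{BH2019}) that a nonzero Lorentzian polynomial has Hessian with exactly one positive eigenvalue at every point of $(\R_{>0})^n$. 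So I would first check that $p=F_M$ meets this definition, and then upgrade the conclusion to the parametrized form.

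The support of $F_M$ is exactly the set $\{\mathbf 1_B:B\in\calB\}$ of indicator vectors of the bases of $M$, and the exchange axiom of Definition~\ref{def:basis} is precisely the assertion that this set is $M$-convex, so the support condition is immediate. For the second condition, note that $\partial_{i_1}\cdots\partial_{i_{r-2}}F_M$ vanishes unless $i_1,\dots,i_{r-2}$ are distinct (as $F_M$ is multi-affine) and form an independent set of $M$, in which case it equals $F_{M'}$ for the rank-$2$ minor $M'=M/\{i_1,\dots,i_{r-2}\}$. Thus it suffices to show that the Hessian of the basis generating function of an arbitrary rank-$2$ matroid $M'$ has at most one positive eigenvalue. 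Discarding loops (which never occur in $F_{M'}$), the non-loop elements of $M'$ partition into parallel classes $P_1,\dots,P_m$ with $m\geq2$, and, writing $y_k:=\sum_{a\in P_k}x_a$, we get $F_{M'}=\sum_{1\leq k<l\leq m}y_ky_l=\tfrac12\bigl((\sum_k y_k)^2-\sum_k y_k^2\bigr)$. Hence $F_{M'}$, as a quadratic form in $\bm x$, is the pullback along the surjective linear map $\bm x\mapsto(y_1,\dots,y_m)$ of the form whose matrix is $\tfrac12(\mathbf 1\mathbf 1^T-I_m)$, whose eigenvalues are $\tfrac{m-1}{2}$ (once) and $-\tfrac12$ (with multiplicity $m-1$); since pulling a quadratic form back along a surjection does not change its number of positive eigenvalues, $H_{F_{M'}}$ has exactly one positive eigenvalue, as required. (The cases $r\leq1$ are trivial.)

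By the Lorentzian criterion this shows $F_M$ is Lorentzian, whence $\log F_M$ is concave on $(\R_{>0})^n$ --- the log-concavity statement. For the $s$-parametrized ``homogeneous'' version I would argue as in the proof of Corollary~\ref{cor:eigen}: at $\bm a\in(\R_{>0})^n$ we have $F_M(\bm a)>0$, and since $F_M$ is Lorentzian $H_{F_M}(\bm a)$ has exactly one positive eigenvalue, so $-F_M(\bm a)H_{F_M}(\bm a)$ has exactly one negative eigenvalue; adding the rank-one positive semidefinite matrix $s(\nabla F_M)^{T}\nabla F_M$ and invoking Cauchy's interlacing theorem (Theorem~\ref{thm:Cauchy}) together with the sign of the determinant in Proposition~\ref{identity1} (applied, when $H_{F_M}(\bm a)$ is singular, to its restriction to $\Image H_{F_M}(\bm a)$, on which it is non-singular with exactly one positive eigenvalue) yields positive semidefiniteness for every $s\geq\tfrac{r-1}{r}$, and the boundary points of $\R_{\geq0}^n$ then follow by continuity, positive semidefiniteness being a closed condition. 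The one genuinely deep ingredient is the Lorentzian criterion itself --- that $M$-convexity of the support together with the ``at most one positive eigenvalue'' condition on all order-$(d-2)$ derivatives forces complete log-concavity in every degree --- which I would invoke from \cite{AGV2018,BH2019} rather than reprove; everything else (the $M$-convexity of the basis indicators, the reduction to rank-$2$ minors, the complete-multipartite eigenvalue computation, and the passage from log-concavity to homogeneous log-concavity) is elementary, and constitutes the ``careful reading'' of \cite{AGV2018} alluded to before the statement.
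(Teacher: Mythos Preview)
The paper does not give its own proof of this statement: it is quoted from \cite{AGV2018}, with the remark that ``by carefully reading their proof, one can easily show homogeneous log-concavity''. Your write-up is precisely such a careful reading, and the Lorentzian route ($M$-convex support from the basis exchange axiom, reduction to rank-$2$ minors $M'=M/\{i_1,\dots,i_{r-2}\}$, explicit eigenvalue computation for the complete-multipartite quadratics) is exactly the Anari--Gharan--Vinzant argument, so the approaches coincide.

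The one imprecision is in the singular case of the upgrade to the $s$-parametrized form. Proposition~\ref{identity1} is stated for the Hessian of a homogeneous polynomial, and the restriction of $H_{F_M}(\bm a)$ to its image is not itself such a Hessian, so the proposition does not literally apply there. You can repair this by invoking instead the matrix identity~$(*)$ from its proof, which holds for an arbitrary symmetric $A$ and vector $\bm v$; the Euler relations $H_{F_M}(\bm a)\,\bm a=(r-1)\nabla F_M(\bm a)^{T}$ and $\bm a^{T}H_{F_M}(\bm a)\,\bm a=r(r-1)F_M(\bm a)$ survive after replacing $\bm a$ by its projection onto $\Image H_{F_M}(\bm a)$, since the kernel component is annihilated anyway. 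Cleaner still is to bypass Proposition~\ref{identity1} altogether: a symmetric matrix $H$ with at most one positive eigenvalue and $\bm a^{T}H\bm a>0$ satisfies $(\bm y^{T}H\bm a)^{2}\geq(\bm y^{T}H\bm y)(\bm a^{T}H\bm a)$ for every $\bm y$ (restrict $H$ to $\Span\{\bm a,\bm y\}$ and note, by interlacing, that the $2\times2$ restricted form has nonpositive determinant); substituting the Euler identities gives exactly $-F_MH_{F_M}+\tfrac{r-1}{r}(\nabla F_M)^{T}\nabla F_M\succeq0$ at $\bm a$, and increasing $s$ only adds a positive semidefinite rank-one term.
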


\begin{remark}
In \cite[Theorem 4.2]{AGV2018}, the authors show that 
$F_M(\bm{x})$ satisfies {\it complete log-concavity}, 
i.e., for any $\bm{v_1}, \ldots, \bm{v_k}\in\R_{\geq0}^n \ (0\leq k\leq r-2)$, 
$\partial_{\bm{v_1}}\cdots\partial_{\bm{v_k}}F_M(\bm{x})$ is log-concave on $\R_{\geq0}^n$. 
\end{remark}

\begin{remark}
If $M$ is not simple, then $\det (-F_MH_{F_M}+s(\nabla F_M)^T(\nabla F_M))$ is identically zero, in particular, it cannot be positive definite at any point in $\R^n$. In fact, we assume $M$ has a loop $e$ or parallel elements $\{e_1, e_2\}$. 
In the former case, by definition, $\fr<\partial/\partial x_e>F_M=0$, in particular, $\det H_{F_M}=0$. 
In the latter case, we can express $F_M$ like 
\begin{align*}
F_M=F_M|_{x_{e_1}=x_{e_2}=0}+(x_{e_1}+x_{e_2})G, 
\end{align*} 
where 
\begin{align*}
G=G(x_1, \ldots, \hat x_{e_1}, \ldots, \hat x_{e_2}, \ldots, x_n)=\fr<\partial F_{M}/\partial x_{e_1}>=\fr<\partial F_{M}/\partial x_{e_2}>. 
\end{align*}
Thus we have $\det H_{F_M}=0$. In both cases, we have $\det H_{F_M}=0$. 
As seen in Proposition \ref{identity1}, this implies that $\det (-F_MH_{F_M}+s(\nabla F_M)^T(\nabla F_M))=0$. 
\end{remark}

In the rest of this section, we prepared some lemmas for our main theorem. 

\begin{lemma}\label{lem:star}
Let $M$ be a matroid on $[N]$ of $\rank M=r\geq2$ with no loops (we don't assume $M$ is simple). We consider its basis generating function $F_M(\bm{x})$.  
For any basis $B=\{i_{N-r+1}, \ldots, i_N\}\in\calB$ of $M$ and its complement $\{j_1, \ldots ,j_{N-r}\}$, $F_M(\bm{x})$ satisfies the following $(1\leq k\leq N-r)$. 
\begin{equation}\label{partial differentiation of the basis generating function for a matroid which is obtained from deletion}
\fr<\partial F_{M}|_{x_{j_1}=\cdots=x_{j_{k-1}}=0}/\partial x_{i_k}>\neq0.
\end{equation}
\end{lemma}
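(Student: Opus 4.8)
The plan is to relabel the ground set so that $j_1,\dots,j_{N-r}$ become $1,\dots,N-r$ and $i_{N-r+1},\dots,i_N$ become $N-r+1,\dots,N$, and then to reinterpret each condition \eqref{partial differentiation of the basis generating function for a matroid which is obtained from deletion} as a non-vanishing statement about the basis generating function of a minor of $M$. Indeed, recalling from Section \ref{sec:Matroid} that $F_{M}|_{x_{j_1}=\cdots=x_{j_{k-1}}=0}=F_{M_0}$ where $M_0=M\setminus\{j_1,\dots,j_{k-1}\}$, and that $\partial F_{M_0}/\partial x_{i_k}=F_{M_0/i_k}$ provided $i_k$ is neither a loop nor a coloop of $M_0$ (and equals $F_{M_0}$ with $x_{i_k}$ set to anything if $i_k$ is a coloop, still a nonzero polynomial), the claim reduces to showing: the relevant minor of $M$ is loopless, hence has at least one basis, hence has a nonzero (in fact positive-coefficient) basis generating function.

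The key steps, in order, are as follows. First I would fix $k$ with $1\le k\le N-r$ and set $M_0 := M\setminus\{j_1,\dots,j_{k-1}\}$, noting that $M_0$ has ground set of size $N-k+1$ and still has rank $r$: this is because the complement $B=\{i_{N-r+1},\dots,i_N\}$ of $\{j_1,\dots,j_{N-r}\}$ is disjoint from the deleted elements, so $B$ remains a basis of $M_0$, and by Proposition \ref{the number of basis are the same} every basis of $M_0$ has $r$ elements. Second, I would check that $i_k\in M_0$ is not a loop: $M$ has no loops by hypothesis and deletion cannot create a loop (a loop is a $1$-element circuit, and circuits of $M_0$ are circuits of $M$ contained in the smaller ground set). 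Third, I would observe that $i_k$ is not a coloop of $M_0$: the set $\{j_k,\dots,j_{N-r}\}\cup(B\setminus\{i_k\})$ has $N-r-k+1 + (r-1) = N-k$ elements, lies in the ground set of $M_0$, and is exactly the complement in $M_0$ of $\{i_k\}$; since a coloop is characterized by lying in every basis, equivalently its complement contains no basis, I must rule this out. Here is where I would use the basis-exchange axiom directly: starting from the basis $B$ of $M_0$ and the element $i_k\in B$, I want to produce another basis $B'$ of $M_0$ avoiding $i_k$. A cleaner route: the element $j_k$ is not a coloop of the submatroid $M_0\setminus\{j_{k+1},\dots,j_{N-r}\}$... actually the most economical argument is to induct downward, or simply to invoke that $i_k$ not being a coloop of $M_0$ is equivalent to $M_0/i_k$ still having rank $r-1$ with a basis, which is equivalent to the contraction being a genuine matroid with $\calB\ne\emptyset$ — and $M_0/i_k$ always is a matroid. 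So in fact $\partial F_{M_0}/\partial x_{i_k}$ is the basis generating function of the rank-$(r-1)$ matroid $M_0/i_k$ on a nonempty-basis collection, hence a sum of at least one monomial with coefficient $1$, hence not identically zero.

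The main obstacle, and the place requiring genuine care rather than bookkeeping, is ruling out that $i_k$ is a coloop of $M_0$ — i.e., that some basis of $M_0$ avoids $i_k$. The clean way is: since $B\setminus\{i_k\}$ is independent in $M_0$ and is not a basis (it has only $r-1$ elements), and since $M_0$ has rank $r$ with ground set strictly larger than $B\setminus\{i_k\}$, there is some element $e$ in the ground set of $M_0$ outside $B\setminus\{i_k\}$ with $(B\setminus\{i_k\})\cup\{e\}$ independent; if the only such $e$ is $i_k$ itself then $i_k$ would be a coloop, so I must show another choice exists. This is where the hypothesis that the $j$'s form the complement of a basis matters: the element $j_k$ is available in $M_0$, and one can run the exchange axiom with the bases $B$ and, say, a basis $B''$ of $M_0$ containing $j_k$ (which exists because $j_k$ is not a loop and hence lies in some basis) to force the existence of a basis through $j_k$ but not through $i_k$ — alternatively, phrase everything through the identity $F_{M_0}=F_{M_0\setminus i_k}+x_{i_k}F_{M_0/i_k}$ and note $F_{M_0/i_k}\neq 0$ iff $M_0/i_k$ has a basis iff $\rank(M_0/i_k)=r-1$ iff $i_k$ is not a coloop, then dispatch "not a coloop" by exhibiting $\{j_k,i_{N-r+1},\dots,i_N\}\setminus\{i_k\}$... no. I expect the actual proof below to pick one of these equivalent formulations; the honest summary is that everything reduces to the standard fact that deletion preserves looplessness and that a non-coloop non-loop element $e$ satisfies $F_{M/e}\neq 0$, and the only real content is verifying $i_k$ is not a coloop of $M_0$, which follows from $B$ being the complement of $\{j_1,\dots,j_{N-r}\}$ together with a single application of the basis-exchange property.
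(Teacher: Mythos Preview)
Your core argument is correct, but it is buried under an unnecessary detour. First, a notational clarification: the symbol $i_k$ in the statement (for $1\le k\le N-r$) has to be read as $j_k$, since the basis is indexed $\{i_{N-r+1},\dots,i_N\}$ and the only elements carrying subscripts $1,\dots,N-r$ are the $j$'s; the paper's own proof silently makes the same swap. This matters because your second half appears to treat $i_k$ as a basis element, which it is not, and that seems to be the source of the confusion.

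Once this is sorted, your argument is simply: $j_k$ is not a loop of $M_0=M\setminus\{j_1,\dots,j_{k-1}\}$ (deletion cannot create loops), hence $j_k$ lies in some basis of $M_0$, hence $\partial F_{M_0}/\partial x_{j_k}\neq 0$. That is the entire proof. The coloop discussion is a red herring: you yourself note parenthetically that if $j_k$ were a coloop the derivative would still be nonzero, so there is nothing to rule out. Calling it ``the main obstacle'' and then spending a paragraph not resolving it is the only real defect in the write-up; there is no mathematical gap.

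The paper takes a more constructive but equivalent route. It uses the fundamental circuit $C(j_k,B)\subseteq B\cup\{j_k\}$, which (since $j_k$ is not a loop) must contain some $b\in B$; then $(B\setminus\{b\})\cup\{j_k\}$ is a basis of $M$ lying inside $B\cup\{j_k\}$, hence disjoint from $\{j_1,\dots,j_{k-1}\}$, hence a basis of $M_0$ containing $j_k$. Your abstract version (``non-loops survive deletion and lie in some basis'') is a legitimate alternative and arguably slicker once stated in two lines; the paper's version has the minor advantage of exhibiting the witnessing basis explicitly.
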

\begin{proof}
By the definition of $F_M$, 
we only have to show that 
for each $k$, there exists a basis $B_0\in\calB$ such that $B_0\cap\{i_1, \ldots ,i_{k-1}\}=\emptyset$ and $i_k\in B_0$. 
Below, we will show that there exists some $\ell$ such that we can take $\{i_k\}\cup\{j_{N-r+1}, \ldots ,\hat{j_{\ell}}, \ldots ,j_{N}\}$ as $B_0$. 
In fact, by \cite[Corollary 1.2.6]{MR2849819} there is a unique circuit $C(i_k, B)$ which is contained in $B\cup\{i_k\}$ (so-called the {\it fundamental circuit}). 
Since by definition and assumption, $i_k\in C(i_k, B)$, $i_k$ is not a loop, and $C(i_k, B)$ contains some $j_\ell$. 
Then by \cite[Exercise 1.2.5]{MR2849819}, $B_0:=\{i_k\}\cup\{j_{N-r+1}, \ldots ,\hat{j_{\ell}}, \ldots ,j_{N}\}$ is a basis. %This completes the proof.  
\end{proof}

Since the basis generating function $F_M(\bm{x})$ of any simple matroid $M$ satisfies the condition \eqref{partial differentiation of the basis generating function for a matroid which is obtained from deletion} by Corollary \ref{cor:van} and Lemma \ref{lem:star}, we have the following. 

\begin{theorem}\label{cor:matroidver}
Let $M$ be a simple matroid on $[N]$ of $\rank M=r\geq3$. 
For any basis $B$, we assume that $F_M$ is strictly homogeneous log-concave on $C_{B>0}^N \ (\subseteq(\R_{>0})^N)$. 
Then for any submatroid $M_0:=M\setminus\{j_1, \ldots, j_k\}$ of rank $r$, $F_{M_0}$ is strictly homogeneous log-concave on $C_{B_0>0}^{N-k} \ (\subseteq(\R_{>0})^{N-k})$ for any basis $B_0$ of $M_0$. 
\end{theorem}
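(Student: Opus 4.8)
The plan is to obtain the statement as a direct combination of the reduction result Corollary~\ref{cor:van} with the non-vanishing of partial derivatives supplied by Lemma~\ref{lem:star}; no new ingredient is required. First I would check that the polynomial hypotheses of Corollary~\ref{cor:van} hold: since $M$ is a matroid of rank $r$, its basis generating function $F_M\in\R[x_1,\dots,x_N]$ is multi-affine, homogeneous of degree $r\geq 3$, and has positive coefficients; and since $M$ is simple it is in particular loopless, so Lemma~\ref{lem:star} is applicable to $M$.

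Next, fix a rank-$r$ submatroid $M_0=M\setminus\{j_1,\dots,j_k\}$ and a basis $B_0$ of $M_0$. The combinatorial bookkeeping I would make explicit is: (i) $B_0$ is independent in $M$ of size $r=\rank M$, hence a basis of $M$; (ii) $B_0$ is disjoint from $\{j_1,\dots,j_k\}$, so $j_1,\dots,j_k$ can be completed to an enumeration $j_1,\dots,j_{N-r}$ of the complement $[N]\setminus B_0$, which in particular forces $k\leq N-r$ (consistent with the range required in Corollary~\ref{cor:van}); and (iii) by the deletion formula for basis generating functions, $F_{M_0}=F_M|_{x_{j_1}=\cdots=x_{j_k}=0}$. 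After relabelling so that $(j_1,\dots,j_k)=(1,\dots,k)$, I take $I=B_0$ in Corollary~\ref{cor:van}.

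It then remains to verify the two hypotheses of Corollary~\ref{cor:van}. Applying the hypothesis of the theorem to the basis $B=B_0$ gives that $F_M$ is strictly homogeneous log-concave on $C^N_{B_0>0}$, which is precisely the region $C^N_{I>0}$ for $I=B_0$. For the chain of non-vanishing partial derivatives, Lemma~\ref{lem:star} applied to $M$ with the basis $B_0$ and the complement enumeration $j_1,\dots,j_{N-r}$ yields $\partial\bigl(F_M|_{x_{j_1}=\cdots=x_{j_{m-1}}=0}\bigr)/\partial x_{j_m}\neq 0$ as a polynomial for every $1\leq m\leq k$, which is exactly the chain of hypotheses demanded by Corollary~\ref{cor:van}. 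Hence Corollary~\ref{cor:van} applies and gives that $F_{M_0}=F_M|_{x_{j_1}=\cdots=x_{j_k}=0}$ is strictly homogeneous log-concave on $C^{N-k}_{B_0>0}$, as claimed.

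Since all the analytic content is already encapsulated in Corollary~\ref{cor:van} (which in turn rests on the quadratic-discriminant computation of Lemma~\ref{cor:equivalent}) and the matroidal input is exactly Lemma~\ref{lem:star}, the ``main obstacle'' here is only the purely combinatorial identification of the index sets: that a basis of a rank-preserving deletion is a basis of the ambient matroid, that the deleted elements lie inside the complement of that basis, and that the relabelling is compatible with the $C^{\bullet}_{I>0}$ notation. There is no genuine analytic difficulty at this step.
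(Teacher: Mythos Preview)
Your proposal is correct and follows exactly the same route as the paper: invoke Lemma~\ref{lem:star} (applied to the basis $B_0$ of $M$) to supply the chain of non-vanishing partial derivatives, then feed this together with the assumed strict homogeneous log-concavity on $C^N_{B_0>0}$ into Corollary~\ref{cor:van}. Your write-up is in fact more careful than the paper's, making explicit the points that $B_0$ is a basis of $M$, that $\{j_1,\dots,j_k\}\subset[N]\setminus B_0$ so $k\le N-r$, and that $F_{M_0}=F_M|_{x_{j_1}=\cdots=x_{j_k}=0}$.
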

\begin{proof}
Let $B_0:=\{i_{N-r+1}, \ldots, i_{N}\}$ be a basis of $M_0$ (and $M$). 
Since $F_M$ satisfies the condition (*) for $x_{j_1}, \ldots, x_{j_k}$ by Lemma \ref{lem:star}, the polynomial  $F_{M_0}=F_M|_{x_{j_1}=\cdots=x_{j_k}=0}$ is strictly homogeneous log-concave on $C_{B_0>0}^{N-k} \ (\subseteq(\R_{>0})^{N-k})$. 
\end{proof}

%%%%%%%%
\section{Main result}\label{sec:Main result}
{In this section, we will prove our main result. }Our main result is that the Kirchhoff polynomial of each simple graph is strictly log-concave on $\R^{n}_{>0}$ (Theorem \ref{mainresult}). 

First, we define the Kirchhoff polynomial of a graph. 
\begin{definition}[Kirchhoff polynomial]\label{Kirchhoff polynomial}\hspace{2pt}\\
For a connected graph $\Gamma=(V, E)$ with $|E|=n$, we define the {\it Kirchhoff polynomial} of $\Gamma$ by 
\[F_\Gamma(x_1, \ldots, x_n)=\sum_{T\in\calB_\Gamma}{\prod_{i\in T}{x_i}},\]
where $\calB_\Gamma$ is the set of spanning trees in $\Gamma$. 
\end{definition}
The Kirchhoff polynomial can be seen as a special case of the basis generating function of a matroid by Example \ref{Graphic matroid}.

\begin{theorem}[Main result]\label{mainresult}\hspace{2pt}\\
For any simple graph $\Gamma=(V, E)$ with $|V|=r+1\geq3$ and $|E|=n\geq3$, the Kirchhoff polynomial $F_\Gamma(\bm{x})$ is strictly homogeneous log-concave on $(\R_{>0})^n$. 
In other words, 
\[(-F_\Gamma H_{F_\Gamma}+s(\nabla F_{\Gamma})^T\nabla F_\Gamma)|_{\bm{x}=\bm{a}}\succ0\]
for any $\bm{a}\in(\R_{>0})^n$ and $s>\fr<r-1/r>$. 
In particular, $H_{F_\Gamma}|_{\bm{x}=\bm{a}}$ is non-degenerate, with $n-1$ negative eigenvalues and exactly one positive eigenvalue. 
Thus, 
\begin{align*}
(-1)^{n-1}(\det H_{F_\Gamma})|_{\bm{x}=\bm{a}}>0. 
\end{align*}
Moreover, for each spanning tree $T$ in $\Gamma$, $F_\Gamma$ is strictly homogeneous log-concave on $C^n_{T>0}$, 
where
\begin{align*}
C^n_{T>0}=\{\bm{a}\in\R^n_{\geq0} \ | \ z_i>0 \ (i\in T), \ z_j\geq0 \ (j\notin T)\} \ (\supseteq(\R_{>0})^n). 
\end{align*}
\end{theorem}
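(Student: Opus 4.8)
The plan is to reduce Theorem \ref{mainresult} to the case of the complete graph $K_{r+1}$ by a deletion argument, and then to handle the complete graph case via the prehomogeneous-vector-space machinery assembled in Section \ref{sec:Homogeneous polynomials}. More precisely, suppose we have already proven the following two ingredients: (a) the determinantal identity $\det H_{F_{K_{r+1}}}|_{\bm{x}=\bm{a}} \neq 0$ for $\bm{a}\in(\R_{>0})^N$ (where $N=\binom{r+1}{2}$), which will come from Corollary \ref{cor:reg} once $F_{K_{r+1}}$ is identified with the relative invariant of a regular irreducible prehomogeneous vector space; and (b) strict homogeneous log-concavity of $F_{K_{r+1}}$ on $C^N_{T>0}$ for every spanning tree $T$ of $K_{r+1}$. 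Then Theorem \ref{cor:matroidver}, applied to the simple graphic matroid $M=M(K_{r+1})$, shows that every rank-$r$ submatroid $M\setminus\{j_1,\dots,j_k\}$ also has strictly homogeneous log-concave basis generating function on the corresponding cone $C^{N-k}_{B_0>0}$. Since any simple graph $\Gamma$ with $r+1$ vertices embeds as a spanning subgraph of $K_{r+1}$, its graphic matroid $M(\Gamma)$ is exactly such a rank-$r$ submatroid (deleting the edges of $K_{r+1}$ not in $\Gamma$), so $F_\Gamma = F_{K_{r+1}}|_{x_{j_1}=\cdots=x_{j_k}=0}$ is strictly homogeneous log-concave on $C^n_{B_0>0}$ for every spanning tree $B_0$ of $\Gamma$. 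Taking the union over all spanning trees (equivalently, noting $(\R_{>0})^n\subseteq C^n_{T>0}$ for any $T$) gives strict homogeneous log-concavity on $(\R_{>0})^n$.

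Next I would deduce the eigenvalue statement: since $F_\Gamma$ is a polynomial with positive coefficients and is strictly log-concave at each $\bm{a}\in(\R_{>0})^n$ (strict homogeneous log-concavity at $s$ for any $s>\frac{r-1}{r}$ implies strict log-concavity, taking $s=1$), Corollary \ref{cor:eigen} applies directly to conclude that $H_{F_\Gamma}|_{\bm{x}=\bm{a}}$ has exactly $n-1$ negative eigenvalues and exactly one positive eigenvalue, hence is non-degenerate, and $(-1)^{n-1}(\det H_{F_\Gamma})|_{\bm{x}=\bm{a}}>0$. The final clause about $C^n_{T>0}$ is then literally what the deletion argument above produced.

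The main obstacle — and the genuinely new content — is establishing ingredient (a)/(b) for the complete graph: exhibiting $F_{K_{r+1}}$ as the irreducible relative invariant of an explicit regular irreducible prehomogeneous vector space $(G,\rho,V)$ with $\dim V = N = \binom{r+1}{2}$ and $\deg F = r$. Once that identification is in hand, Corollary \ref{cor:reg} forces $\det H_{F_{K_{r+1}}} = c\, F_{K_{r+1}}^{N(r-2)/r}$ with $c\in\Cstar$, so the Hessian is nonvanishing on $(\R_{>0})^N$ (where $F_{K_{r+1}}>0$); this is the promised identity $\det H_{F_{K_{r+1}}} = (-1)^{N-1}c_r(F_{K_{r+1}})^{N-r-1}$ after checking $N(r-2)/r = N-r-1$, i.e. $N = \binom{r+1}{2}$, which is consistent. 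Combining the nonvanishing Hessian with the (non-strict) homogeneous log-concavity of $F_{K_{r+1}}$ from Theorem \ref{thm:AGVs} — non-strict positive semi-definiteness plus non-degeneracy of the relevant quadratic form upgrades to positive definiteness — yields strict homogeneous log-concavity of $F_{K_{r+1}}$ on the positive orthant, and a further induction on subtrees (again via Theorem \ref{cor:matroidver} applied inside $K_{r+1}$, or directly via Corollary \ref{cor:van} and Lemma \ref{lem:star}) extends this to every cone $C^N_{T>0}$.

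I expect the prehomogeneous-vector-space identification to require some care: one must name the group (a suitable quotient or cover related to $GL_r(\C)$ acting on symmetric-type or graph-Laplacian data), verify that the generic orbit is dense with complement a hypersurface, check irreducibility of $\rho$, confirm regularity (Hessian not identically zero, which one can see from a single nondegenerate specialization), and finally match the unique irreducible relative invariant with $F_{K_{r+1}}$ up to scalar — presumably by recognizing $F_{K_{r+1}}(\bm{x})$ as $\det$ of a weighted reduced Laplacian and using the matrix-tree theorem. Modulo that identification, every remaining step is a direct citation of the lemmas and corollaries already proved in Sections \ref{sec:Homogeneous polynomials} and \ref{sec:Matroid}.
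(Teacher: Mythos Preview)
Your proposal is correct and follows essentially the same route as the paper: reduce to $K_{r+1}$ via Theorem~\ref{cor:matroidver}/Corollary~\ref{maintheorem1}, identify $F_{K_{r+1}}$ with the determinant on $\Sym(r,\C)$ via the Matrix-Tree Theorem (this is exactly the paper's Key Observation~\ref{rem:completegraph} and Proposition~\ref{KSSymtherelative}), apply Corollary~\ref{cor:reg} to get $\det H_{F_{K_{r+1}}}=cF_{K_{r+1}}^{N-r-1}$, and then combine the nonvanishing Hessian with Theorem~\ref{thm:AGVs} and Proposition~\ref{identity1} to upgrade semi-definiteness to definiteness.

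One small correction: your ``further induction on subtrees'' to pass from $(\R_{>0})^N$ to $C^N_{T>0}$ is unnecessary and would not work as stated, since Corollary~\ref{cor:van} and Theorem~\ref{cor:matroidver} restrict the polynomial rather than enlarge the domain. The paper's argument is more direct: on $C^N_{T>0}$ the monomial $\prod_{i\in T}x_i$ is strictly positive, so $F_{K_{r+1}}>0$ there, hence by the Hessian identity $\det H_{F_{K_{r+1}}}\neq 0$ on all of $C^N_{T>0}$; since Theorem~\ref{thm:AGVs} already gives semi-definiteness on $\R^N_{\geq 0}\supseteq C^N_{T>0}$, Proposition~\ref{identity1} yields strict homogeneous log-concavity on $C^N_{T>0}$ immediately.
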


Now, we will prove our main theorem: 
The Kirchhoff polynomial can be seen as the special case of the basis generating function of a matroid. 
Hence we have log-concavity of the Kirchhoff polynomial by Theorem \ref{thm:AGVs}
and we must only verify the strictness. 
Since we have Proposition \ref{lem:hom1}, 
the Kirchhoff polynomial is strictly log-concave if and only if its Hessian does not vanish. 
Every simple graph is obtained from the complete graph with the same number vertices by cutting edges. 
In other words, every simple graphic matroid is a submatroid of the graphic matroid of the complete graph. 
We can easily find the following corollary by Theorem \ref{cor:matroidver}.  
\begin{corollary}\label{maintheorem1}
Let $\Gamma=(V, E)$ be a simple graph with $|V|=r+1\geq3$ and $|E|=n\geq3$. 
For each spanning tree $T$ in $\Gamma$, we assume that $F_\Gamma$ is strictly homogeneous log-concave on $C^n_{T>0}$. 
Then for any connected subgraph $\Gamma'=(V', E')$ with $|V'|=r+1$ and $|E'|=n-k$, $F_{\Gamma'}$ is strictly homogeneous log-concave on $C_{T'>0}^{n-k} \ (\supseteq(\R_{>0})^{n-k})$ for any basis $T'$ in $\Gamma'$. 
\end{corollary}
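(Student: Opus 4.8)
The plan is to deduce this corollary directly from Theorem \ref{cor:matroidver} by translating the graph-theoretic hypotheses into matroid-theoretic ones. First I would observe that a simple graph $\Gamma$ gives rise to a simple graphic matroid $M = M(\Gamma)$ on $[n]$ with $\rank M = r = |V|-1 \geq 3$, and that the Kirchhoff polynomial $F_\Gamma$ is precisely the basis generating function $F_M$ by Example \ref{Graphic matroid} and Definition \ref{def:basispoly}. The spanning trees in $\Gamma$ are exactly the bases of $M$, so the cone $C^n_{T>0}$ associated to a spanning tree $T$ is the cone $C^N_{B>0}$ (with $N=n$) of Theorem \ref{cor:matroidver} associated to the basis $B=T$. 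Hence the hypothesis ``$F_\Gamma$ is strictly homogeneous log-concave on $C^n_{T>0}$ for each spanning tree $T$'' is literally the hypothesis of Theorem \ref{cor:matroidver} for $M=M(\Gamma)$.

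Next I would handle the passage to subgraphs. A connected subgraph $\Gamma' = (V', E')$ of $\Gamma$ with $|V'| = r+1$ and $|E'| = n-k$ is obtained from $\Gamma$ by deleting the $k$ edges in $E \setminus E'$; correspondingly $M(\Gamma') = M \setminus \{j_1,\dots,j_k\}$ is a submatroid of $M$, where $\{j_1,\dots,j_k\}$ are the deleted edges. The key point to check is that this submatroid still has rank $r$: this holds precisely because $\Gamma'$ spans all $r+1$ vertices (it is a connected subgraph on the full vertex set), so $\Gamma'$ contains a spanning tree of $\Gamma$, i.e. a basis of $M$ of size $r$ survives in $M(\Gamma')$. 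Thus $M_0 := M(\Gamma')$ is a rank-$r$ submatroid of the simple matroid $M$, and Theorem \ref{cor:matroidver} applies verbatim: $F_{M_0} = F_{\Gamma'}$ is strictly homogeneous log-concave on $C^{n-k}_{B_0>0}$ for every basis $B_0$ of $M_0$, i.e. on $C^{n-k}_{T'>0}$ for every spanning tree $T'$ in $\Gamma'$.

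Finally I would note that $C^{n-k}_{T'>0} \supseteq (\R_{>0})^{n-k}$ by the definition of these cones (the coordinates indexed by $T'$ are required positive, the rest only nonnegative, so in particular the all-positive orthant is contained in the cone), which gives the parenthetical inclusion in the statement. The main obstacle — really the only nontrivial point — is verifying that deleting edges while keeping the graph connected and spanning all vertices preserves the matroid rank; everything else is a dictionary translation between graphs and graphic matroids and a direct invocation of Theorem \ref{cor:matroidver}. Since simplicity of $M(\Gamma)$ follows from $\Gamma$ being a simple graph (no loops, no parallel edges), all hypotheses of Theorem \ref{cor:matroidver} are met.
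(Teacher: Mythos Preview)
Your proposal is correct and matches the paper's approach: the paper simply remarks that every simple graphic matroid is a submatroid of a graphic matroid and invokes Theorem \ref{cor:matroidver}, which is exactly the dictionary translation you spell out. One small slip: the hypothesis $|V|=r+1\geq 3$ only gives $r\geq 2$, not $r\geq 3$ as you wrote, so strictly speaking the $r=2$ case is not covered by Theorem \ref{cor:matroidver}; this edge case is glossed over in the paper as well.
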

Since we have Corollary \ref{maintheorem1}, we only have to show the Hessian does not vanish in the case of the complete graph. 
As stated in Section \ref{sec:Homogeneous polynomials},  
for the relative invariant of an irreducible prehomogeneous vector space, 
its Hessian is in the form of $cF^{m}$. 
We can show that the Kirchhoff polynomial of the complete graph can be realized as the relative invariant. 
Then we have the following. 
\begin{theorem}\label{maintheorem2}
Let $N=\binom{r+1}{2}$. 
We have 
\[\det H_{F_{K_{r+1}}}=(-1)^{N-1}c_r(F_{K_{r+1}})^{N-r-1}, \]
where $c_{r}=2^{N-r}(r-1)$. 
\end{theorem}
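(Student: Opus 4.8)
The plan is to exhibit the Kirchhoff polynomial $F_{K_{r+1}}$ as (a constant multiple of) the relative invariant of an explicit irreducible regular prehomogeneous vector space, and then apply Corollary~\ref{cor:reg} to pin down the exponent, after which the constant $c_r$ is computed by evaluating both sides at one convenient point. The natural candidate is the space $V=\Sym_r(\C)$ of symmetric $r\times r$ matrices (so $\dim V=N=\binom{r+1}{2}$), on which $G=GL_r(\C)$ acts by $g\cdot X=gXg^T$; this is the classical example of an irreducible prehomogeneous vector space whose relative invariant is $\det X$, with singular set $S=\{\det X=0\}$ of codimension one, and whose Hessian does not vanish identically (so it is regular). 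The key point is to identify the edge-variables $x_{ij}$ of $K_{r+1}$ with suitable entries of the symmetric matrix so that $F_{K_{r+1}}$ becomes $\det$ of that matrix. Concretely, to an assignment of weights $x_{ij}$ ($1\le i<j\le r+1$) to the edges of $K_{r+1}$ one associates the weighted Laplacian $L$, an $(r+1)\times(r+1)$ symmetric matrix with $L_{ii}=\sum_{j\ne i}x_{ij}$ and $L_{ij}=-x_{ij}$; the weighted matrix–tree theorem says any principal $r\times r$ minor of $L$ equals $F_{K_{r+1}}(\bm x)$. Deleting the last row and column gives a symmetric $r\times r$ matrix $A(\bm x)$ whose entries are $A_{ii}=x_{i,r+1}+\sum_{j\le r,\,j\ne i}x_{ij}$ and $A_{ij}=-x_{ij}$; the assignment $\bm x\mapsto A(\bm x)$ is a linear isomorphism $\C^N\xrightarrow{\sim}\Sym_r(\C)$ (it is visibly injective and both sides have dimension $N$), and under it $F_{K_{r+1}}(\bm x)=\det A(\bm x)$.

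Having set this up, transport the $GL_r(\C)$-action on $\Sym_r(\C)$ back along this isomorphism to get an action of $GL_r(\C)$ on $\C^N$ by linear substitutions in the variables $x_{ij}$; then $(GL_r(\C),\rho,\C^N)$ is an irreducible prehomogeneous vector space (irreducibility of $\Sym_r$ as a $GL_r$-representation is standard), it is regular because $\det H_{\det}$ on $\Sym_r$ is not identically zero, and its relative invariant is $F_{K_{r+1}}$, which has degree $r$. Corollary~\ref{cor:reg} then gives immediately
\[
\det H_{F_{K_{r+1}}}=c\,(F_{K_{r+1}})^{\frac{N(r-2)}{r}}
\]
for some $c\in\Cstar$; since $F_{K_{r+1}}$ has degree $r$ and is irreducible, and the exponent must be an integer, one checks $\frac{N(r-2)}{r}=N-r-1$ (indeed $N(r-2)=rN-2N$ and $2N=r(r+1)$, so $\frac{N(r-2)}{r}=N-(r+1)$), matching the claimed exponent. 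It remains only to determine the scalar $c=(-1)^{N-1}c_r$ and to verify the sign.

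For the constant, evaluate at a point where everything is explicit. A clean choice is $\bm x=\bm 1$ (all edge-weights $1$): then $A(\bm 1)$ is the matrix $rI_r+J_r'$ where the diagonal contribution is $r$ and off-diagonal entries are $-1$, i.e.\ $A(\bm1)=(r+1)I_r-J_r$ with $J_r$ the all-ones matrix; its determinant is $(r+1)^{r-1}\cdot 1=(r+1)^{r-1}$, which is indeed $F_{K_{r+1}}(\bm 1)$, the number of spanning trees of $K_{r+1}$ (Cayley's formula). Using the matrix identity from Lemma~\ref{lem:relhess}'s proof, $H_{\det}$ at a symmetric matrix $Y$ has a known form (its entries are the second cofactors of $Y$, up to the symmetrization of variables), and one can compute $\det H_{F_{K_{r+1}}}(\bm 1)$ via the chain rule through the linear map $\bm x\mapsto A(\bm x)$: $\det H_{F_{K_{r+1}}} = (\det P)^2\cdot (\det H_{\det})\circ A$, where $P$ is the matrix of the linear isomorphism $\C^N\to\Sym_r(\C)$ in coordinates. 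Then $\det H_{\det}$ at the scalar-type matrix $(r+1)I_r-J_r$ is computed by diagonalizing (eigenvalue $1$ with multiplicity $r-1$ and eigenvalue $r+1$ once), which makes the Hessian of $\det$ essentially block-diagonal in the corresponding coordinates. Matching powers of $F_{K_{r+1}}(\bm1)=(r+1)^{r-1}$ on both sides then isolates $c$; the factor $2^{N-r}$ comes from the off-diagonal variables $x_{ij}$ ($i<j$) appearing with coefficient that forces a factor $2$ per such variable in passing between the Hessian in the $x$-variables and the Hessian in the matrix entries (there are $\binom{r}{2}=N-r$ of them... more precisely $N-r$ after accounting for the last vertex), and $(r-1)$ is what survives from the spectral computation. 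The sign $(-1)^{N-1}$ agrees with Corollary~\ref{cor:eigen}, since $F_{K_{r+1}}$ has positive coefficients and, by the already-established strict log-concavity on the complete graph (or directly from Br\"and\'en--Huh Lorentzian theory), $H_{F_{K_{r+1}}}$ has exactly one positive eigenvalue.

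The main obstacle I anticipate is not the prehomogeneous-vector-space identification — that is essentially bookkeeping once the Laplacian picture is in place — but the precise evaluation of the constant $c_r=2^{N-r}(r-1)$. One must be careful with two sources of combinatorial factors: (i) the Jacobian $\det P$ of the linear change of variables from edge-weights to symmetric-matrix entries, where the $2$'s enter because a variable $x_{ij}$ with $i<j\le r$ sits in two symmetric off-diagonal slots, and (ii) the explicit spectrum of $H_{\det}$ at the chosen base point, which requires knowing that $H_{\det}(Y)_{(ij),(kl)}$ is (a symmetrization of) $\pm$ the $2\times2$ cofactor obtained by deleting rows $i,k$ and columns $j,l$. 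Getting both factors exactly right — rather than up to an unidentified constant — is the delicate part; everything else follows from the structural results quoted from Section~\ref{sec:Homogeneous polynomials}.
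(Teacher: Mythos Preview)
Your structural argument is exactly the paper's: identify $F_{K_{r+1}}$ with $\det$ on $\Sym(r,\C)$ via the reduced Laplacian (this is the paper's Key Observation~\ref{rem:completegraph}), invoke the classical fact that $(GL_r(\C),\rho,\Sym(r,\C))$ is a regular irreducible prehomogeneous vector space with relative invariant $\det$ (Proposition~\ref{KSSymtherelative}), and apply Corollary~\ref{cor:reg} to obtain $\det H_{F_{K_{r+1}}}=c\,F_{K_{r+1}}^{N-r-1}$.

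The only divergence is in pinning down $c$. The paper does not compute it from scratch: it quotes the second author's separate evaluation of $(\det H_{F_{K_{r+1}}})|_{\bm x=\bm 1}$ (Proposition~\ref{yazawa}, from \cite{Y2018}) and reads off $c=(-1)^{N-1}2^{N-r}(r-1)$ by comparison with Cayley's formula. Your proposed route---chain rule through the linear isomorphism $\bm x\mapsto A(\bm x)$, times the known Hessian of $\det$ on $\Sym_r$---is a legitimate alternative and your identification of the two sources of factors (the Jacobian of the change of variables, and the symmetrization $2$'s for the $\binom{r}{2}=N-r$ off-diagonal entries) is correct in spirit, but you have not actually carried it out, and this is where all the content of the constant lies. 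If you finish that computation cleanly you will have a self-contained proof that avoids the external citation.

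One caution: your justification of the sign $(-1)^{N-1}$ via Corollary~\ref{cor:eigen} and ``the already-established strict log-concavity on the complete graph'' is circular in the paper's logic, since Theorem~\ref{maintheorem2} is precisely what is used to \emph{prove} that strict log-concavity. The sign must come honestly out of the evaluation at $\bm 1$ (or, as you note parenthetically, from an independent Lorentzian argument).
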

Theorem \ref{maintheorem2} implies that for any spanning tree $T$, the Kirchhoff polynomial is strictly log-concave on $C_{T>0}^n$. 
Hence we obtain our main result from Corollary \ref{maintheorem1}.  

In the rest of this section, we study more precisely the Kirchhoff polynomials and give a proof of Theorem \ref{maintheorem2}. 

In general, if a connected graph $\Gamma$ has $r+1$ vertices, then a spanning tree in $\Gamma$ has $r$ edges. 
Hence, the Kirchhoff polynomial of $\Gamma$ with $r+1$ vertices is a homogeneous polynomial of degree $r$. Moreover, the Kirchhoff polynomial is a multi-affine polynomial of which each coefficient is one.

\begin{example}\label{eg:1}
Consider the two hollowing graphs. 
\begin{figure}[htbp]
 \begin{minipage}{0.3\hsize}
  \begin{center}
\begin{tikzcd}[row sep=tiny, column sep=tiny]
\overset{1}{\bullet}\ar[rr, no head]\ar[ddrr, no head]\ar[dd, no head]&&\overset{2}{\bullet}\ar[ddll, no head]\ar[dd, no head]\\
&&\\
\underset{3}{\bullet}\ar[rr, no head]&&\underset{4}{\bullet}\\
&\hspace{10pt}\hspace{10pt}
\end{tikzcd}   
  \end{center}
\vspace{-3ex}
  \caption{$K_{4}$}
  \label{fig:one}
 \end{minipage}
 \begin{minipage}{0.5\hsize}
  \begin{center}
\begin{tikzcd}[row sep=1ex, column sep=1ex]
\overset{1}{\bullet}\ar[rr, no head]\ar[ddrr, no head]\ar[dd, no head]&&\overset{2}{\bullet}\ar[dd, no head]\\&&\\\underset{3}{\bullet}\ar[rr, no head]&&\underset{4}{\bullet}\\&\hspace{10pt}\hspace{10pt}&
\end{tikzcd}
  \end{center}
\vspace{-3ex}
  \caption{$K_{4}\setminus \Set{2,3}$}
  \label{fig:two}
 \end{minipage}
\end{figure}

The number of spanning trees in $K_{4}$ and $K_{4}\setminus\Set{2,3}$ are sixteen and eight, respectively.  
Then the Kirchhoff polynomial of $K_{4}$ is as follows:   
\begin{align*}
F_{K_4}(\bm{x})=
&x_{12}x_{13}x_{14} + x_{12}x_{14}x_{23} + x_{13}x_{14}x_{23} + x_{12}x_{13}x_{24} \\
+ &x_{13}x_{14}x_{24}+ x_{12}x_{23}x_{24} + x_{13}x_{23}x_{24} + x_{14}x_{23}x_{24} \\
+ &x_{12}x_{13}x_{34} + x_{12}x_{14}x_{34}+ x_{12}x_{23}x_{34} + x_{13}x_{23}x_{34} \\
+ &x_{14}x_{23}x_{34} + x_{12}x_{24}x_{34} + x_{13}x_{24}x_{34} + x_{14}x_{24}x_{34}. 
\end{align*}
And the Kirchhoff polynomial of $K_{4}\setminus\Set{2,3}$ is as follows:   
\begin{align*}
F_{K_4\setminus\Set{2,3}}(\bm{x})=
&x_{12}x_{13}x_{14} +  x_{12}x_{13}x_{24} + x_{13}x_{14}x_{24} + x_{12}x_{13}x_{34}\\
+ &x_{12}x_{14}x_{34} + x_{12}x_{24}x_{34} + x_{13}x_{24}x_{34} + x_{14}x_{24}x_{34}. 
\end{align*}
\end{example}

In Example \ref{eg:1}, we can see that the Kirchhoff polynomial of $K_{4}\setminus\Set{2,3}$ is equal to the Kirchhoff polynomial of $K_{4}$ substituting zero to the variable $x_{23}$. 
In general, every Kirchhoff polynomial is obtained from the Kirchhoff polynomial of the complete graph with same number vertices by substituting zero {for} some variables. 

Next, we see that the Kirchhoff polynomial is realized as the determinant of some matrix. 
This is called the Matrix-tree theorem. 
Let $(E_{r})_{ij}$ be an $r\times r$ matrix, where its $(i, j)$-component is one and the others are zero. 
If the size of the matrix is obvious from the context, then we drop the size to $E_{ij}=(E_{r})_{ij}$. 
For a matrix $X$, $X^{(ij)}$ denotes the submatrix of $X$ obtained by removing the $i$th row and the $j$th column.

\begin{definition}[Laplacian]\hspace{2pt}\\
For a graph $\Gamma=(V, E)$ with $|V|=r$, we associate a variable $x_e$ to each edge $e\in E$, and define the {\it Laplacian $L_{\Gamma}$ of $\Gamma$} indexed by vertices by 
\[L_\Gamma=\sum_{e=\{i, j\}\in E}{x_e(E_{ii}-E_{ij}-E_{ji}+E_{jj})}.\]
\end{definition}

The following theorem is a well-known fact. 
For example, see \cite[Theorem VI.29]{MR1813436}. 

\begin{theorem}[The Matrix-Tree Theorem]\label{thm:matrixtree}\hspace{2pt}\\
For a graph $\Gamma$, its Kirchhoff polynomial $F_\Gamma$ is equal to any cofactor of its Laplacian $L_\Gamma$. 
In other words, for a graph $\Gamma=(V, E)$ with $|V|=r$, 
\[F_\Gamma=(-1)^{i+j}\det (L_{\Gamma}^{(ij)})\]
for any $1\leq i, j\leq r$. 
\end{theorem}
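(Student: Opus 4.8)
The plan is to prove this weighted Matrix--Tree Theorem through the signed incidence matrix together with the Cauchy--Binet formula. Fix an arbitrary orientation of the edges of $\Gamma=(V,E)$ with $|V|=r$, $|E|=n$, and let $B=(b_{ve})\in M_{r\times n}$ be the incidence matrix: for an edge $e$ oriented from $i$ to $j$ put $b_{ie}=1$, $b_{je}=-1$, and all other entries $0$. With $D=\diag(x_e)_{e\in E}$, one checks directly that the $e$-th column $b_e$ of $B$ satisfies $b_eb_e^{T}=E_{ii}-E_{ij}-E_{ji}+E_{jj}$ for $e=\{i,j\}$, so that
\[
L_\Gamma=\sum_{e}x_e\,b_eb_e^{T}=BDB^{T},
\]
and this identity does not depend on the chosen orientation.

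Next I would reduce to the case $i=j=r$. From $L_\Gamma\bm{1}=0=\bm{1}^{T}L_\Gamma$ and $\det L_\Gamma=0$ we get $L_\Gamma\cdot\operatorname{adj}(L_\Gamma)=\operatorname{adj}(L_\Gamma)\cdot L_\Gamma=0$; for connected $\Gamma$ the kernel of $L_\Gamma$ over the field of rational functions is one-dimensional, spanned by $\bm{1}$, so every column of $\operatorname{adj}(L_\Gamma)$ is a multiple of $\bm{1}$ and every row a multiple of $\bm{1}^{T}$, forcing $\operatorname{adj}(L_\Gamma)=c\,\bm{1}\bm{1}^{T}$ for a single scalar $c$. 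Since the $(j,i)$-entry of $\operatorname{adj}(L_\Gamma)$ is exactly $(-1)^{i+j}\det(L_\Gamma^{(ij)})$, all the cofactors coincide and equal $c=\det(L_\Gamma^{(rr)})$. (If $\Gamma$ is disconnected there are no spanning trees and every cofactor vanishes, so the statement holds trivially.)

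It then remains to identify $\det(L_\Gamma^{(rr)})$ with $F_\Gamma$. Deleting the last row of $B$ gives $B_0\in M_{(r-1)\times n}$ with $L_\Gamma^{(rr)}=B_0DB_0^{T}$, and the Cauchy--Binet formula yields
\[
\det\bigl(L_\Gamma^{(rr)}\bigr)=\sum_{\substack{S\subseteq E,\ |S|=r-1}}\bigl(\det B_0[S]\bigr)^{2}\prod_{e\in S}x_e,
\]
where $B_0[S]$ is the submatrix of $B_0$ on the columns indexed by $S$. The combinatorial heart of the argument is the claim that $\bigl(\det B_0[S]\bigr)^{2}$ equals $1$ when $S$ is a spanning tree of $\Gamma$ and $0$ otherwise: whenever $S$ contains a cycle an alternating sum of the corresponding columns of $B$ vanishes, so linear independence of the columns of $B_0[S]$ forces $S$ to be acyclic; conversely a forest on $r$ vertices with $r-1$ edges is a spanning tree, and expanding $\det B_0[S]$ repeatedly along the row of a leaf vertex shows the value is $\pm1$. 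Substituting this in gives $\det(L_\Gamma^{(rr)})=\sum_{T\in\calB_\Gamma}\prod_{e\in T}x_e=F_\Gamma$, completing the proof. The only genuinely substantive input is this forest lemma; the identity $L_\Gamma=BDB^{T}$, the adjugate reduction, and Cauchy--Binet are routine linear algebra, and passing to the principal cofactor $i=j=r$ is precisely what lets us avoid any sign bookkeeping among the maximal minors of $B$.
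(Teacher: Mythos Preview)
Your argument is correct and is the standard proof via the oriented incidence matrix and Cauchy--Binet; the reduction to a single principal cofactor through $\operatorname{adj}(L_\Gamma)=c\,\bm 1\bm 1^{T}$ is clean, and the forest lemma is handled properly. The paper itself gives no proof of this theorem at all---it simply records the statement as ``a well-known fact'' and points to \cite[Theorem VI.29]{MR1813436}---so there is nothing to compare against beyond noting that your write-up is exactly the classical one found in that reference.
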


The following {Key Observation} \ref{rem:completegraph} is critically important {in the proof of} Theorem \ref{maintheorem2}.  {Actually, this observation builds a bridge between combinatorics of Kirchhoff polynomials and relative invariants of prehomogeneous vector space.} 

\begin{obs}\label{rem:completegraph}
{For a graph, we denote $x_{ij}=x_e$ for each edge $e=\{i, j\}$. 
In particular, $x_{ij}=x_{ji}$. For the complete graph $K_{r+1}$, 
the entries in Laplacian $L_{K_{r+1}}=\left(\ell_{ij}\right)_{1\leq i, j\leq r+1}$ is
\[\ell_{ij}=
\begin{cases}
\left(\sum_{k=1}^{r+1}x_{ik}\right)-x_{ii}&(\text{if \ $i=j$}), \\
-x_{ij}&(\text{otherwise}). 
\end{cases}\]
One can easily check that $L_{K_{r+1}}^{(11)}$ is a symmetric matrix and $\{x_{ij}\}_{1\leq i<j\leq r+1}$ gives a coordinate of the vector space $\Sym(r, \C)$ which consists of all $r\times r$ symmetric matrices over $\C$ (see below Example \ref{ex:laplacian} in the case $r=3$). 
Hence we have
\begin{align*}
\Set{L_{K_{r+1}}^{(11)}|x_{ij}\in\C}=\Sym(r, \C). 
\end{align*}
Therefore the Kirchhoff polynomial $F_{K_{r+1}}$ can be regarded as a function from $\Sym(r, \C)$ to $\C$. 
In other words, we can regard the Kirchhoff polynomial as the following function: 
\begin{align*}
F_{K_{r+1}}=\det: \Sym(r, \C)\to\C. 
\end{align*} }

\end{obs}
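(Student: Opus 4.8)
The plan is to read off the three assertions directly from the explicit form of the Laplacian and then to invoke the Matrix-Tree Theorem (Theorem \ref{thm:matrixtree}). First I would fix notation: index the rows and columns of $L_{K_{r+1}}^{(11)}$ by the vertices $2,\dots,r+1$, and relabel these $1,\dots,r$ via $k\leftrightarrow k+1$. From the displayed formula for $\ell_{ij}$ one reads off that the $(a,b)$-entry of $L_{K_{r+1}}^{(11)}$ equals $-x_{a+1,b+1}$ when $a\neq b$ and $\sum_{k\neq a+1}x_{a+1,k}$ when $a=b$ (here $k$ runs over $1,\dots,r+1$, and we use that $K_{r+1}$ has no loops, so $x_{ii}=0$). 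Since $x_{a+1,b+1}=x_{b+1,a+1}$, the off-diagonal entries are symmetric in $a$ and $b$, and the diagonal entries obviously are; hence $L_{K_{r+1}}^{(11)}\in\Sym(r,\C)$, which is the first assertion.

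Next I would prove that the $\C$-linear map $\Phi\colon\C^N\to\Sym(r,\C)$ sending $(x_{ij})_{1\le i<j\le r+1}$ to $L_{K_{r+1}}^{(11)}$ is a bijection. Since $\dim_\C\Sym(r,\C)=\binom{r+1}{2}=N=\dim_\C\C^N$, it is enough to check surjectivity. Given $S=(s_{ab})\in\Sym(r,\C)$, the off-diagonal equations force $x_{a+1,b+1}=-s_{ab}$ for $2\le a+1<b+1\le r+1$, which pins down exactly the edge variables $x_{ij}$ with $i,j\ge 2$; the ``vertex-$1$'' variables $x_{1,j}$ do not occur off the diagonal, and in the diagonal entry $s_{aa}=x_{1,a+1}+\sum_{b\neq a}x_{a+1,b+1}$ the variable $x_{1,a+1}$ appears with coefficient $1$, so $x_{1,a+1}=s_{aa}+\sum_{b\neq a}s_{ab}$ is forced as well. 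Thus $\Phi$ is a linear isomorphism — the defining system is triangular with respect to the splitting of the edge set of $K_{r+1}$ into edges incident to vertex $1$ and the rest — so $\Set{L_{K_{r+1}}^{(11)}|x_{ij}\in\C}=\Sym(r,\C)$, which is the second assertion.

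Finally, the Matrix-Tree Theorem (Theorem \ref{thm:matrixtree}) with $i=j=1$ gives $F_{K_{r+1}}=(-1)^{1+1}\det\bigl(L_{K_{r+1}}^{(11)}\bigr)=\det\bigl(L_{K_{r+1}}^{(11)}\bigr)$; transporting along the identification $\C^N\cong\Sym(r,\C)$ afforded by $\Phi$, the Kirchhoff polynomial $F_{K_{r+1}}$ becomes precisely the function $\det\colon\Sym(r,\C)\to\C$. I do not expect a genuine obstacle here: computing the entries of $L_{K_{r+1}}^{(11)}$ and appealing to the Matrix-Tree Theorem are routine, and the only point that deserves a sentence of care is the bijectivity of $\Phi$ — namely, that once the off-diagonal entries have fixed the variables $x_{ij}$ with $i,j\ge 2$, the diagonal entries can be matched by solving for the edge variables at vertex $1$. (Alternatively one could simply cite the classical fact that reduced Laplacians of complete graphs parametrize all symmetric matrices, but the direct verification above is short enough to include.)
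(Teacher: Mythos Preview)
Your proposal is correct and follows exactly the route the paper intends: the paper itself gives no proof beyond ``one can easily check'' together with Example~\ref{ex:laplacian} and an implicit appeal to the Matrix-Tree Theorem, and you have simply written out that easy check in full. Your triangular argument for the bijectivity of $\Phi$ (first solve the off-diagonal equations for the edges not touching vertex~$1$, then the diagonal equations for the edges at vertex~$1$) is the natural way to make the paper's claim precise.
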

\begin{example}\label{ex:laplacian}
{The Laplacian matrix $L_{K_{4}}$ of the complete graph $K_{4}$ is
\begin{align*}
L_{K_{4}}=
\begin{pmatrix}
x_{12}+x_{13}+x_{14}&-x_{12}&-x_{13}&-x_{14} \\
-x_{21}&x_{21}+x_{23}+x_{24}&-x_{23}&-x_{24} \\
-x_{31}&-x_{32}&x_{31}+x_{32}+x_{34}&-x_{34} \\
-x_{41}&-x_{42}&-x_{43}&x_{41}+x_{42}+x_{43} 
\end{pmatrix}. 
\end{align*}
And the $(1,1)$ cofactor $L_{K_{4}}^{(11)}$ of $L_{K_{4}}$ is
\begin{align*}
L_{K_{4}}^{(11)}=
\begin{pmatrix}
x_{21}+x_{23}+x_{24}&-x_{23}&-x_{24} \\
-x_{32}&x_{31}+x_{32}+x_{34}&-x_{34} \\
-x_{42}&-x_{43}&x_{41}+x_{42}+x_{43} 
\end{pmatrix}. 
\end{align*}
%Let $\Sym(r, \C)$ be the set of symmetric matrices of size $r$ with complex entries. 
Note that $L_{K_{4}}^{(11)}$ is a symmetric matrix and $\{x_{ij}\}_{1\leq i<j\leq r+1}$ gives a coordinate of $\Sym(3, \C)$. 
Hence we have
\begin{align*}
\Set{L_{K_{4}}^{(11)}|x_{ij}\in\C}=\Sym(3, \C). 
\end{align*}}

\end{example}

In \cite{MR0430336}, irreducible prehomogeneous vector spaces have already been classified. 
Here, we focus on the following prehomogeneous vector space whose the relative invariant is given by the Kirchhoff polynomial of complete graphs.  
See \cite[Proposition 3 in Section 5]{MR0430336} or \cite[Section 7, I-(2)]{MR0430336} for the details on Proposition \ref{KSSymtherelative}. 

\begin{proposition}\label{KSSymtherelative}
Let $\rho$ be the representation of $GL_r(\C)$ on $\Sym(r, \C)$ such that
\[\rho(P)X=PXP^T \ \ (P\in GL_r(\C)).\]
Then $(GL_r(\C), \rho, Sym(r, \C))$ is a regular irreducible prehomogeneous vector space. 
Moreover, the relative invariant is given by $\det: \Sym(r, \C)\to\C$.   
\end{proposition}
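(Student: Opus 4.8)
The plan is to verify the three assertions — prehomogeneity, irreducibility, and regularity with the stated relative invariant — essentially by hand, since everything reduces to classical linear algebra over $\C$. First I would check that $\rho$ really is a representation: for $P, Q \in GL_r(\C)$ and $X \in \Sym(r,\C)$ we have $\rho(PQ)X = PQX(PQ)^T = P(QXQ^T)P^T = \rho(P)\rho(Q)X$, and $\rho(P)X$ is again symmetric since $(PXP^T)^T = PX^TP^T = PXP^T$; this is routine and I would state it in one line. Irreducibility of $\rho$ as a $GL_r(\C)$-representation is standard (it is the symmetric square of the standard representation twisted appropriately, or one argues directly that any invariant subspace of $\Sym(r,\C)$ must be all of it by producing, from any nonzero symmetric matrix, under the $GL_r$-action, a spanning set of $\Sym(r,\C)$); I would cite this as well known or give the quick direct argument. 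One could alternatively invoke the classification in \cite{MR0430336} directly, but a self-contained check is cleaner.

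Next I would establish prehomogeneity by exhibiting a single dense orbit. The key fact is that over $\C$ any symmetric matrix $X$ of rank $r$ (i.e.\ $\det X \neq 0$) can be written as $X = PP^T$ for some $P \in GL_r(\C)$ — this is the statement that every nondegenerate symmetric bilinear form over an algebraically closed field is equivalent to the standard one. Hence the orbit of the identity matrix $I_r$ under $\rho$ is exactly $\{X \in \Sym(r,\C) : \det X \neq 0\}$, which is the complement of the hypersurface $S = \{\det X = 0\}$. Since $S$ is a proper algebraic subset and its complement is a single $GL_r(\C)$-orbit, $(GL_r(\C), \rho, \Sym(r,\C))$ is a prehomogeneous vector space with singular set $S$.

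For the relative invariant: $\det(\rho(P)X) = \det(PXP^T) = \det(P)^2 \det X$, so $F := \det$ satisfies $F(\rho(P)X) = \chi(P)F(X)$ with the rational character $\chi(P) = \det(P)^2$, i.e.\ $F \leftrightarrow \chi$. By Proposition \ref{prop:therelative}, since $\rho$ is irreducible, there is at most one irreducible relative invariant up to constant, and $\det$ is irreducible as a polynomial on $\Sym(r,\C)$ (the determinant of a generic symmetric matrix is an irreducible polynomial — I would recall this standard fact, e.g.\ via the fact that its singular locus has codimension $\geq 2$, or cite it), so $\det$ is \emph{the} relative invariant. Finally, regularity requires $\det H_F$ to be not identically zero on $\Sym(r,\C)$; equivalently, by Lemma \ref{lem:relhess} and Corollary \ref{cor:reg}, it suffices to note that the Hessian of $\det$ on the space of symmetric matrices is a nonzero polynomial — this can be seen by evaluating at $X = I_r$, where the second-order behavior of $\det$ is nondegenerate on $\Sym(r,\C)$, or again simply cited from \cite[Section 7]{MR0430336}.

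The main obstacle I anticipate is not any single deep step but rather marshalling the classical inputs cleanly: specifically (a) the $P P^T$ normal form for nondegenerate symmetric matrices over $\C$, which drives prehomogeneity, and (b) the irreducibility of the symmetric determinant as a polynomial, which is needed to conclude that $\det$ is \emph{the} (irreducible) relative invariant rather than merely \emph{a} relative invariant. Both are standard, so in practice I would keep this proof short and defer to \cite{MR0430336} for the regularity computation, since the excerpt already sets up Corollary \ref{cor:reg} to extract the Hessian identity once regularity is in hand.
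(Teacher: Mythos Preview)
The paper does not actually prove this proposition: it simply cites \cite[Proposition~3 in Section~5]{MR0430336} and \cite[Section~7, I--(2)]{MR0430336} for all the details, so there is no ``paper's own proof'' to compare against. Your proposal therefore goes well beyond what the paper supplies, and the sketch you give is correct. The congruence normal form $X = PP^T$ over $\C$ for nondegenerate symmetric $X$ gives exactly the dense orbit you need; the character computation $\det(PXP^T) = (\det P)^2 \det X$ is immediate; the irreducibility of the symmetric determinant and of the representation $\Sym^2(\C^r)$ are both standard facts that pin down $\det$ as \emph{the} relative invariant via Proposition~\ref{prop:therelative}.

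One small remark on regularity: rather than (or in addition to) evaluating the Hessian at $I_r$, you can in fact borrow the paper's own logic here. Immediately after Proposition~\ref{yazawa} the paper observes that Yazawa's explicit evaluation of $\det H_{F_{K_{r+1}}}$ at $\bm{x} = (1,\ldots,1)$ is nonzero, and that this already furnishes an independent proof that the prehomogeneous vector space in Proposition~\ref{KSSymtherelative} is regular. So if you want to stay internal to the paper and avoid an extra citation, that route is available.
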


As stated in Key Observation \ref{rem:completegraph}, the Kirchhoff polynomial $F_{K_{r+1}}(\bm{x})$ of the complete graph $K_{r+1}$ is the relative invariant of the prehomogeneous vector space in Proposition \ref{KSSymtherelative}.

On the other hand, it is known that the evaluation of $(\det H_{F_{K_{r+1}}})|_{\bm{x}=(1,1,\ldots,1)}$ by the second author \cite{Y2018}. 
Note that we used Cayley's theorem $F_{K_{r+1}}(1, 1, \ldots, 1)=(r+1)^{r-1}$ at the  second equality in Proposition \ref{yazawa} (see \cite[Theorem VI. 30]{MR1813436} for details on Cayley's theorem).

\begin{proposition}[Yazawa {\cite[Theorem 3.3]{Y2018}}]\label{yazawa}\hspace{2pt}\\
For the complete graph $K_{r+1}$, 
\begin{align*}
(\det H_{F_{K_{r+1}}})|_{\bm{x}=(1,1,\ldots,1)}&={(-1)}^{N-1}{2}^{N-(r+1)}(r+1)^{r+1+N(r-3)}(r-1)\\
&=(-1)^{N-1}2^{N-r}(r-1)(F_{K_{r+1}}(1, 1, \ldots ,1))^{N-r-1}, 
\end{align*}
where $N=\binom{r+1}{2}$. 
\end{proposition}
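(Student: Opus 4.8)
The plan is to exploit the identification furnished by Key Observation \ref{rem:completegraph}, under which $F_{K_{r+1}}$ becomes the determinant $D:=\det$ on the space $\Sym(r,\C)$ of symmetric matrices, together with the prehomogeneous structure of Proposition \ref{KSSymtherelative}. Write $S\colon\C^{N}\xrightarrow{\sim}\Sym(r,\C)$, $\bm x\mapsto L_{K_{r+1}}^{(11)}$, for the linear isomorphism of \ref{rem:completegraph}, so that $F_{K_{r+1}}=D\circ S$. Ordering the edge variables with the $r$ ``star'' variables $x_{1k}$ first and the remaining $\binom r2$ variables $x_{ij}$ ($2\le i<j$) after, the passage $S$ from the edge variables to the matrix entries $\{X_{pq}\}_{p\le q}$ is block triangular with diagonal blocks $I_r$ and $-I_{\binom r2}$; hence its Jacobian $T$ satisfies $(\det T)^2=1$, and the edge-variable Hessian determinant of $F_{K_{r+1}}$ equals the entry-coordinate Hessian determinant of $D$ under the substitution $X=S(\bm x)$.

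Since $(GL_r(\C),\rho,\Sym(r,\C))$ is a regular irreducible prehomogeneous vector space (Proposition \ref{KSSymtherelative}) whose relative invariant $D$ has degree $r$ and whose underlying space has dimension $N$, Corollary \ref{cor:reg} gives $\det H_{F_{K_{r+1}}}=c\,F_{K_{r+1}}^{\,N(r-2)/r}$ for a nonzero constant $c$. As $2N/r=r+1$, the exponent simplifies to $N-r-1$, so
\[\det H_{F_{K_{r+1}}}=c\,F_{K_{r+1}}^{\,N-r-1}.\]
It remains only to pin down $c$, and I would do this by evaluating at the point $\bm x^{*}:=S^{-1}(I_r)$, where $F_{K_{r+1}}(\bm x^{*})=\det I_r=1$; thus $c=\det H_{F_{K_{r+1}}}(\bm x^{*})=\det H_D|_{X=I_r}$, the last equality coming from $(\det T)^2=1$.

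The one genuine computation is $\det H_D|_{X=I_r}$. From the expansion $D(I_r+Y)=1+\tr Y+\tfrac12\big((\tr Y)^2-\tr(Y^2)\big)+\cdots$ for $Y\in\Sym(r,\C)$, the Hessian quadratic form of $D$ at $I_r$ is $(\tr Y)^2-\tr(Y^2)$. In the entry coordinates $Y_{pq}=Y_{qp}$ ($p\le q$) one has $\tr(Y^2)=\sum_p Y_{pp}^2+2\sum_{p<q}Y_{pq}^2$, so the form is block diagonal: $\bm 1\bm 1^{T}-I_r$ on the $r$ diagonal coordinates (with $\bm 1\in\C^r$ the all-ones vector) and $-2\,I_{\binom r2}$ on the $\binom r2$ off-diagonal coordinates, the $2$ being exactly the doubling of off-diagonal entries. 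Hence
\[\det H_D|_{X=I_r}=\det\!\big(\bm 1\bm 1^{T}-I_r\big)\cdot(-2)^{\binom r2}=(r-1)(-1)^{r-1}\,(-1)^{\binom r2}2^{\binom r2},\]
and using $\binom r2=N-r$ together with $r-1+\binom r2=N-1$ the sign collects to $(-1)^{N-1}$ and the power of two to $2^{N-r}$, giving $c=(-1)^{N-1}2^{N-r}(r-1)$.

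Combining the two displays and invoking Cayley's formula $F_{K_{r+1}}(1,1,\ldots,1)=(r+1)^{r-1}$ then yields
\[(\det H_{F_{K_{r+1}}})|_{\bm x=(1,1,\ldots,1)}=(-1)^{N-1}2^{N-r}(r-1)\big((r+1)^{r-1}\big)^{N-r-1},\]
which is the asserted value; the fully explicit form follows from the exponent identity $(r-1)(N-r-1)=r+1+N(r-3)$. The main obstacle will be the careful factor-of-two bookkeeping between the intrinsic Hessian form $(\tr Y)^2-\tr(Y^2)$ and its matrix in the symmetric entry coordinates — this is precisely where the factor $2^{N-r}$ originates — together with verifying that the change of variables from the edge variables to the matrix entries has squared Jacobian equal to $1$, so that no spurious constant is introduced.
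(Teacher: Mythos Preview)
Your argument is correct, but it takes a genuinely different route from the paper. The paper does not prove this proposition at all: it is quoted from \cite{Y2018}, where the value is obtained by a direct spectral computation of $H_{F_{K_{r+1}}}$ at the all-ones point, and the paper then \emph{uses} this external result together with Corollary~\ref{cor:reg} and Proposition~\ref{KSSymtherelative} to determine the constant $c_r$ in Theorem~\ref{maintheorem2}. You run the logic in the opposite direction: first invoke the prehomogeneous machinery to get $\det H_{F_{K_{r+1}}}=c\,F_{K_{r+1}}^{N-r-1}$ with $c$ unknown, then pin $c$ down by evaluating at $S^{-1}(I_r)$ rather than at $(1,\dots,1)$. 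Because $F_{K_{r+1}}=1$ there, this reduces to the clean computation of $\det H_{\det}|_{X=I_r}$ via the block form $(\bm 1\bm 1^T-I_r)\oplus(-2I_{\binom r2})$, and the Jacobian check $(\det T)^2=1$ ensures no extraneous constant appears; the value at $(1,\dots,1)$ then follows from Cayley's formula. What this buys is self-containment: your argument simultaneously establishes Theorem~\ref{maintheorem2} and Proposition~\ref{yazawa} without recourse to \cite{Y2018}, and the nonvanishing of $\det H_D|_{I_r}$ that you compute along the way also gives an independent verification of the regularity asserted in Proposition~\ref{KSSymtherelative}. The price is that you must take the prehomogeneous identity as logically prior, whereas the paper's order allows Proposition~\ref{yazawa} to stand on its own as a purely combinatorial fact about $K_{r+1}$.
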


By Corollary \ref{cor:reg} and Propositions \ref{KSSymtherelative}, \ref{yazawa}, we have Theorem \ref{maintheorem2}. 
\begin{remark}
Since Proposition \ref{yazawa} implies that $\det H_{F_{K_{}r+1}}\neq0$, one can see as Proposition \ref{yazawa} gives another proof of regularity of the prehomogeneous vector space in Proposition \ref{KSSymtherelative}. 
\end{remark}

%%%%%%%%
\section{Applications}\label{sec:Applications}

In this section, 
we define a graded Artinian Gorenstein algebra $R_\Gamma$ associated to a graph $\Gamma$ (more generally, to a matroid), which has been previously introduced by Maeno and Numata in \cite{MR3566530}. 
Then by using strict (homogeneous) log-concavity of $F_\Gamma$ at any $\bm{a}\in(\R_{>0})^n$, 
we prove that $L_{\bm{a}}:=a_1x_1+\cdots+a_nx_n\in R_{F_\Gamma}^1$ satisfies the strong Lefschetz property at $R^1_{F_\Gamma}$. 
We also mention the relation between our result and known results by Huh and Wang in \cite{MR3733101} (Remark \ref{rem:HW}). 
%In the rest of this paper, let $k$ be a field of characteristic zero. 

\subsection{Artinian Gorenstein algebras}

First, we define an Artinian Gorenstein algebra associated to each homogeneous polynomial.  
\begin{definition}\label{def:algebra}
Let $F$ be a homogeneous polynomial of $F\in {\R}[x_1, \ldots, x_n]$. 
We define an ideal $\Ann(F)$ and a quotient algebra $R^*_F$ by
\begin{align*}
\Ann(F)&=\Set{P\in k[x_1, \ldots, x_n]  |  P\left(\fr<\partial/\partial x_1>, \ldots, \fr<\partial/\partial x_n>\right)F=0}, \\
R^*_F&=k[x_1, \ldots, x_n]/\Ann(F). 
\end{align*}
\end{definition}
\begin{definition}[Poincar\'{e} duality algebra cf.\ {\cite[Definition 2.1]{MR2594646}}]\hspace{2pt}\\
A finite-dimensional graded ${\R}$-algebra $R^*={\bigoplus_{\ell=0}^r{R^\ell}}$ is called the {\it Poincar\'{e} duality algebra} if $\dim_{{\R}} R^r=1$ and the bilinear pairing induced by the multiplication
\[R^\ell\times R^{r-\ell} \to R^r\]
is non-degenerate for $\ell=0, \ldots, \lfloor \fr<r/2> \rfloor$. 
\end{definition}

These rings $R^*_F$ can represent all (standard) graded Artinian Gorenstein algebras as the following. 

\begin{theorem}[cf.\ {\cite[Proposition 2.1, Theorem 2.1 and Remark 2.3]{MR2594646}}]\hspace{2pt}\\
Let $I$ be an homogeneous ideal of $k[x_1, \ldots, x_n]$ and $R^*:=k[x_1, \ldots, x_n]/I$ the quotient algebra, where $k$ is a field of characteristic zero. Then, the following are equivalent: 
\begin{itemize}
\item[(i)] The $k$-algebra $R^*$ is an Artinian Gorenstein algebra. 
\item[(ii)] There exists a homogeneous polynomial $F\in k[x_1, \ldots, x_n]$ such that $I=\Ann(F)$. 
\item[(iii)] $R^*$ is an Artinian Poincar\'{e} duality algebra.
\end{itemize}
\end{theorem}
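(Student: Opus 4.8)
The claimed theorem is the classical Macaulay--Matlis type correspondence between graded Artinian Gorenstein quotients of $k[x_1,\dots,x_n]$ and homogeneous forms, together with the equivalence of such algebras with Artinian Poincar\'e duality algebras. Since the statement cites \cite[Proposition 2.1, Theorem 2.1 and Remark 2.3]{MR2594646}, the plan is to reduce everything to those results and fill in only the short implications. I would prove the cycle $(ii)\Rightarrow(iii)\Rightarrow(i)\Rightarrow(ii)$.

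\emph{Step 1: $(ii)\Rightarrow(iii)$.} Assuming $I=\Ann(F)$ for a homogeneous $F$ of degree $r$, set $R^*=k[x_1,\dots,x_n]/I$. First note $R^*$ is finite-dimensional: any monomial of degree $>r$ lies in $\Ann(F)$, so $R^\ell=0$ for $\ell>r$, and $R^r$ is spanned by the single class corresponding to the top-degree action, giving $\dim_k R^r=1$ (here one uses $\deg F=r$ and that $F\ne 0$, so the degree-$r$ differential operators do not all annihilate $F$; one fixes the isomorphism $R^r\xrightarrow{\sim}k$ by $P\mapsto P(\partial/\partial x_1,\dots,\partial/\partial x_n)F$). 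The crucial point is non-degeneracy of the pairing $R^\ell\times R^{r-\ell}\to R^r\cong k$: if $[P]\in R^\ell$ pairs to zero with every $[Q]\in R^{r-\ell}$, then $Q(\partial)\,\bigl(P(\partial)F\bigr)=0$ for all homogeneous $Q$ of degree $r-\ell$, hence $P(\partial)F$ is a degree-$(r-\ell)$ form killed by all degree-$(r-\ell)$ constant-coefficient operators, which forces $P(\partial)F=0$, i.e.\ $[P]=0$ in $R^\ell$. Thus $R^*$ is a Poincar\'e duality algebra; this is exactly the content of \cite[Proposition 2.1]{MR2594646}.

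\emph{Step 2: $(iii)\Rightarrow(i)$.} A standard graded Artinian $k$-algebra that is a Poincar\'e duality algebra has one-dimensional socle (the socle is concentrated in the top degree $R^r$ because the pairing into $R^r$ is perfect), and a finite-dimensional local ring with one-dimensional socle is Gorenstein. This is the (easy) ring-theoretic direction and is stated in \cite[Remark 2.3]{MR2594646}.

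\emph{Step 3: $(i)\Rightarrow(ii)$.} This is Macaulay's inverse-system theorem in the graded setting: given an Artinian Gorenstein quotient $R^*=k[x_1,\dots,x_n]/I$, the graded Matlis dual of $R^*$ is a cyclic module over the divided-power (equivalently, since $\operatorname{char}k=0$, polynomial) ring in dual variables, generated by a single homogeneous form $F$; unwinding the duality gives $I=\Ann(F)$. This is \cite[Theorem 2.1]{MR2594646}.

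\emph{Main obstacle.} There is no serious obstacle: the entire statement is a packaging of Macaulay duality and is quoted verbatim from \cite{MR2594646}. The only point requiring care is making the identifications $R^\ell=0$ for $\ell>r$ and $\dim_k R^r=1$ precise in Step 1 and checking that the perfect-pairing argument is symmetric in $\ell$ and $r-\ell$; the characteristic-zero hypothesis is used silently in Step 3 to replace divided powers by ordinary polynomials, so that the generator $F$ can be taken to be an honest polynomial acted on by differential operators as in Definition \ref{def:algebra}. Since the paper only needs this theorem to justify that $R^*_{F_\Gamma}$ is a well-behaved Gorenstein algebra, citing \cite{MR2594646} and sketching Steps 1--3 as above suffices.
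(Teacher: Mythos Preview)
Your proposal is correct, and in fact goes further than the paper itself: the paper does not prove this theorem at all but simply states it with the citation to \cite{MR2594646} and moves on. Your cycle $(ii)\Rightarrow(iii)\Rightarrow(i)\Rightarrow(ii)$, with each implication attributed to the appropriate part of the cited reference and the characteristic-zero hypothesis flagged for Step~3, is exactly the standard argument and would be an acceptable expansion of what the authors leave implicit.
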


We recall the notion of the strong Lefschetz property and the Hodge--Riemann bilinear relation.   

\begin{definition}[The strong Lefschetz property]\label{SLP}\hspace{2pt}\\
Let $R^*={\bigoplus_{\ell=0}^r{R^\ell}}, R^r\neq0$, be a graded Artinian ${\R}$-algebra. We say that $L\in R^1$ satisfies the {\it strong Lefschetz property} at degree $\ell$ (or ${R^\ell}$) if the multiplication map  
\[\times L^{r-2\ell} : R^\ell \to R^{r-\ell}\]
is bijective. 
\end{definition}
\begin{remark}
Our definition of the strong Lefschetz property is the strong Lefschetz property in the narrow sense in \cite[Definition 2.1]{MR2594646}. 
\end{remark}

We will use the following criterion which is the special case of the general criterion in \cite[Theorem 3.1]{MR2594646} and \cite[Theorem 4]{W2000}. 
\begin{theorem}[The Hessian criterion of the strong Lefschetz property cf.\ {\cite[Theorem 3.1]{MR2594646}}, {\cite[Theorem 4]{W2000}}]\label{The Hessian criterion}\hspace{2pt}\\
Assume that $x_1, \ldots, x_n\in R_F^1$ is a basis. 
An element $L_{\bm{a}}:=a_1x_1+\cdots+a_nx_n\in R_F^1$ satisfies the strong Lefschetz property at degree one if and only if $F(a_1, \ldots, a_n)\neq0$ and 
\[\det H_{F}|_{\bm{x}=\bm{a}}\neq0, \] 
where $H_F:=\left(\fr<\partial^2 F/\partial x_i\partial x_j>\right)_{1\leq i, j\leq n}$ is the Hessian matrix of $F$. 
\end{theorem}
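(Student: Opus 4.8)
The plan is to translate the strong Lefschetz property at degree one into the non-degeneracy of an explicit symmetric bilinear form on $R_F^1$, and then to identify, in the basis $x_1,\dots,x_n$, the Gram matrix of that form with a nonzero scalar multiple of the Hessian $H_F$ evaluated at $\bm{a}$.

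First I would set up Poincar\'e duality. Since $R_F^*$ is an Artinian Gorenstein algebra, it is a Poincar\'e duality algebra with one-dimensional socle in degree $r$, and the natural map $[-]\colon R_F^r\xrightarrow{\sim}\R$, $P\mapsto P(\partial/\partial x_1,\dots,\partial/\partial x_n)F$, is an isomorphism; it is well defined on the quotient $R_F^r$ because every degree-$r$ element of $\Ann(F)$ annihilates $F$ by definition. The induced pairing $R_F^1\times R_F^{r-1}\to R_F^r\xrightarrow{[-]}\R$ is then perfect, so in particular $\dim R_F^1=\dim R_F^{r-1}$. Hence $\times L_{\bm{a}}^{r-2}\colon R_F^1\to R_F^{r-1}$ is an isomorphism if and only if it is injective; and since the algebra is commutative, its kernel is exactly the radical of the symmetric bilinear form
\[
Q_{\bm{a}}\colon R_F^1\times R_F^1\longrightarrow\R,\qquad Q_{\bm{a}}(\xi,\eta)=\bigl[\,\xi\,\eta\,L_{\bm{a}}^{r-2}\,\bigr],
\]
because $Q_{\bm{a}}(\xi,\eta)$ is the value, under the perfect pairing above, of $L_{\bm{a}}^{r-2}\xi\in R_F^{r-1}$ against $\eta\in R_F^1$. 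Thus $\times L_{\bm{a}}^{r-2}$ is an isomorphism if and only if $Q_{\bm{a}}$ is non-degenerate.

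Next I would compute the Gram matrix of $Q_{\bm{a}}$ in the basis $x_1,\dots,x_n$ of $R_F^1$. Let $D_{\bm{a}}=\sum_k a_k\,\partial/\partial x_k$ be the directional-derivative operator attached to $L_{\bm{a}}=\sum_k a_kx_k$. Evaluating $[-]$ on the polynomial representative $x_ix_j\bigl(\sum_k a_kx_k\bigr)^{r-2}$ of the class $x_ix_jL_{\bm{a}}^{r-2}\in R_F^r$, and using that composition of polynomial differential operators corresponds to multiplication, one obtains
\[
Q_{\bm{a}}(x_i,x_j)=\frac{\partial}{\partial x_i}\frac{\partial}{\partial x_j}\,D_{\bm{a}}^{\,r-2}F=D_{\bm{a}}^{\,r-2}\!\left(\frac{\partial^2 F}{\partial x_i\,\partial x_j}\right).
\]
Since $\partial^2 F/\partial x_i\,\partial x_j$ is homogeneous of degree $r-2$, the elementary identity $D_{\bm{a}}^{\,d}G=d!\,G(\bm{a})$ for $G$ homogeneous of degree $d$ (immediate from the multinomial expansion of $D_{\bm{a}}^{\,d}$, or from expanding $G(\bm{x}+t\bm{a})$ and reading off the coefficient of $t^d$) gives $Q_{\bm{a}}(x_i,x_j)=(r-2)!\,(H_F|_{\bm{x}=\bm{a}})_{ij}$. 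Hence the Gram matrix of $Q_{\bm{a}}$ equals $(r-2)!\,H_F|_{\bm{x}=\bm{a}}$, which is non-degenerate exactly when $\det H_F|_{\bm{x}=\bm{a}}\neq 0$; combined with the previous paragraph this proves that $\times L_{\bm{a}}^{r-2}\colon R_F^1\to R_F^{r-1}$ is an isomorphism if and only if $\det H_F|_{\bm{x}=\bm{a}}\neq 0$. The first condition in the statement, $F(\bm{a})\neq 0$, is the degree-zero analogue: since $[L_{\bm{a}}^{r}]=D_{\bm{a}}^{\,r}F=r!\,F(\bm{a})$, it is equivalent to $L_{\bm{a}}^{r}\neq 0$ in $R_F^r$, i.e.\ to bijectivity of $\times L_{\bm{a}}^{r}\colon R_F^0\to R_F^r$, and it holds automatically when $F$ has positive coefficients and $\bm{a}\in(\R_{>0})^n$, the situation in our applications.

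All the displayed identities are routine, essentially apolarity bookkeeping together with homogeneity. The only step that requires genuine care is the reduction in the second paragraph --- the equivalence ``multiplication map bijective $\iff$ associated bilinear form non-degenerate'' --- which relies essentially on the Gorenstein, i.e.\ Poincar\'e-duality, structure of $R_F^*$: both the perfectness of the degree-$(1,r-1)$ pairing and the equality $\dim R_F^1=\dim R_F^{r-1}$. The minor points, that $[-]$ descends to $R_F^r$ and that it may be computed on any polynomial representative of $x_ix_jL_{\bm{a}}^{r-2}$, are immediate from the definition of $\Ann(F)$.
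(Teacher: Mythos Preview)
Your argument is correct and is precisely the standard one. The paper does not actually prove this theorem --- it is quoted from \cite{MR2594646} and \cite{W2000} --- but Remark~\ref{rem:Hodge--Riemann} carries out exactly your Gram-matrix computation $Q_{L_{\bm a}}^1(x_i,x_j)=(r-2)!\,(H_F|_{\bm x=\bm a})_{ij}$, and refers the reader to the proof of \cite[Theorem~3.1]{MR2594646} for the rest; your write-up supplies that ``rest'' (the reduction via Poincar\'e duality to non-degeneracy of $Q_{\bm a}$) cleanly.

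One small point worth making explicit: as you observe, your argument shows that bijectivity of $\times L_{\bm a}^{r-2}\colon R_F^1\to R_F^{r-1}$ is equivalent to $\det H_F|_{\bm x=\bm a}\neq0$ alone, while $F(\bm a)\neq0$ is the separate degree-zero condition $\times L_{\bm a}^{r}\colon R_F^0\to R_F^r$ bijective. Under Definition~\ref{SLP} (which concerns a single degree $\ell$), the statement of Theorem~\ref{The Hessian criterion} is thus slightly stronger than literally ``SLP at degree one'': it packages degrees $0$ and $1$ together, as the cited sources do. You handle this correctly, and in the applications of the paper both conditions are verified anyway.
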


\begin{definition}[Hodge--Riemann relation]\label{HRR}\hspace{2pt}\\
Let $R_F^*={\bigoplus_{\ell=0}^r{R_F^\ell}}$ be a graded Artinian Gorenstein $\R$-algebra associated to a homogeneous polynomial $F$ of degree $r$. We say that $L\in R_F^1$ satisfies the {\it Hodge--Riemann relation} at degree 1 (or $R_F^1$) 
if the Hodge--Riemann bilinear form $Q_{L}^{1}:R_F^{1}\times R_F^{1}\to {\R}, \ Q_{L}^{1}(\xi_1, \xi_2)={[\xi_1L^{r-2}\xi_2]}$ is negative definite on $\Ker(L^{r-1})$, 
where ${[-]} : R^r_F\xrightarrow{\sim} {\R}$ is {an isomorphism as $P \mapsto P\left(\fr<\partial/\partial x_1>, \ldots, \fr<\partial/\partial x_n>\right)F$}. 
\end{definition}

\begin{remark}
For a Poincar\'{e} duality algebra {$R_F^*$}, the Hodge--Riemann bilinear form ${{Q_{L_{\bm{a}}}^1}}$ at degree one is non-degenerate if and only if $L_{\bm{a}}$ satisfies the strong Lefschetz property at degree one. 
We also note that if $F(\bm{a})>0$, 
then $L_{\bm{a}}$ satisfies the Hodge--Riemann relation at $R^1_F$ if and only if $Q_{L_{\bm{a}}}^1$ is non-degenerate and has only one positive eigenvalue. 
In fact, first, note that $Q_{L_{\bm{a}}}^1(L_{\bm{a}}, L_{\bm{a}})=[L_{\bm{a}}^r]=r!F(\bm{a})>0$, where the final equality is deduced from the similar argument as below Remark \ref{rem:Hodge--Riemann}. Thus, the map $\times L_{\bm{a}}^{r} : R_F^0\xrightarrow{\times L_{\bm{a}}} R_F^1\xrightarrow{\times L_{\bm{a}}^{r-1}} R_F^r$ is an isomorphism, so $R_F^1=\R L_{\bm{a}}\oplus\Ker L_{\bm{a}}^{r-1}$. 
By definition, 
note that this decomposition is orthogonal with respect to $Q_{L_{\bm{a}}}^1$, and $Q_{L_{\bm{a}}}^1(L_{\bm{a}}, L_{\bm{a}})>0$. 
This implies that $L_{\bm{a}}$ satisfies the Hodge--Riemann relation at $R^1_F$ if and only if $Q_{L_{\bm{a}}}^1$ is non-degenerate and has only one positive eigenvalue.  
\end{remark}

\begin{remark}\label{rem:Hodge--Riemann}
Assume that $x_1, \ldots, x_n$ forms a basis of $R_F^1$. 
Then ${{Q_{L_{\bm{a}}}^1}}$ is given by the Hessian matrix $H_{F}|_{\bm{x}=\bm{a}}$ with respect to $x_1, \ldots, x_n\in R_F^1$. 
In fact, by definition, we have 
\begin{align*}
{Q_{L_{\bm{a}}}^1}(x_i, x_j)
&={[x_i L_{\bm{a}}^{r-2}x_j]} \\
&=\left(a_1\fr<\partial/\partial x_1>+\cdots+a_n\fr<\partial/\partial x_n>\right)^{r-2}\left(\fr<\partial^2/\partial x_i\partial x_j>\right)F \\
&={(r-2)!}\left.\fr<\partial^2 F/\partial x_i\partial x_j>\right|_{\bm{x}=\bm{a}}.
\end{align*}
(For more details, see the proof of \cite[Theorem 3.1]{MR2594646}.)
\end{remark}

\begin{remark}
In general, $x_1, \ldots, x_n$ is not necessarily linearly independent in $R^1_F$. 
For example, $F:=x_1x_2+x_1x_3+4x_1x_4+x_2x_3+x_2x_4+x_3x_4$ satisfies $-\fr<\partial F/\partial x_1>+2\fr<\partial F/\partial x_2>+2\fr<\partial F/\partial x_3>-\fr<\partial F/\partial x_4>=0$. 
In this case, $\det H_F$ is identically zero. 
\end{remark}

\subsection{The strong Lefschetz property of the Artinian Gorenstein algebras associated to simple graphic matroids}\label{sec:app1}
Here we consider the Artinian Gorenstein algebra $R^*_{F_M}$ associated to the basis generating function $F_M$ of a simple matroid $M$, 
particularly, the Kirchhoff polynomial $F_\Gamma$ of a simple graph $\Gamma$. 
In these cases, we will often abbreviate $R^*_{F_M}$ to $R^*_M$ and $R^*_{F_{\Gamma}}$ to $R^*_\Gamma$. %, and we consider these algebras when $k=\R$. 

Maeno and Numata conjectured that for any matroid $M$, the algebra $R^*_M$ has the strong Lefschetz property at all degrees in \cite{MR2985388}.

By our main result Theorem \ref{mainresult}, we have the following.

\begin{theorem}[The strong Lefschetz property of $R_\Gamma^*$ at degree one]\label{cor:SLPgraph}\hspace{2pt}\\
For any simple graph $\Gamma=(V, E)$ with $|V|=r+1\geq3$ and $|E|=n\geq3$, and any $\bm{a}=(a_1, \ldots, a_n)\in(\R_{>0})^n$, $a_1x_1+\cdots+a_nx_n\in R_{F_\Gamma}^1$ satisfies the strong Lefschetz property at degree one. 
\end{theorem}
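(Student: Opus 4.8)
The plan is to deduce Theorem \ref{cor:SLPgraph} directly from the main result Theorem \ref{mainresult} via the Hessian criterion for the strong Lefschetz property at degree one, namely Theorem \ref{The Hessian criterion}. First I would observe that the criterion has two hypotheses to be checked: that $x_1,\ldots,x_n$ form a basis of $R^1_{F_\Gamma}$, and that for $\bm{a}\in(\R_{>0})^n$ one has $F_\Gamma(\bm{a})\neq 0$ together with $(\det H_{F_\Gamma})|_{\bm{x}=\bm{a}}\neq 0$. The second condition on $\det H_{F_\Gamma}$ is exactly the non-degeneracy statement supplied by Theorem \ref{mainresult} (strict homogeneous log-concavity at any point of $(\R_{>0})^n$ forces, via Corollary \ref{cor:eigen}, that $H_{F_\Gamma}|_{\bm{x}=\bm{a}}$ is non-degenerate, in fact with $(-1)^{n-1}\det H_{F_\Gamma}|_{\bm{x}=\bm{a}}>0$). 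Since $F_\Gamma$ has positive coefficients, $F_\Gamma(\bm{a})>0$ for $\bm{a}\in(\R_{>0})^n$ is immediate.

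The remaining point is linear independence of $x_1,\ldots,x_n$ in $R^1_{F_\Gamma}$. I would argue this using simplicity of $\Gamma$: a linear relation $\sum_i c_i x_i\in\Ann(F_\Gamma)$ means $\sum_i c_i\,\partial F_\Gamma/\partial x_i=0$ as a polynomial. For an edge $e=\{u,v\}$, $\partial F_\Gamma/\partial x_e$ is the Kirchhoff polynomial of the contraction $\Gamma/e$; because $\Gamma$ is simple (in particular has at least three vertices and no parallel edges), one can pick for each edge $e$ a spanning tree containing $e$ whose monomial distinguishes $e$ from the others — more concretely, since $\Gamma$ is simple there is for each pair of distinct edges $e,e'$ a spanning tree containing $e$ but not $e'$, so the partials $\partial F_\Gamma/\partial x_e$ are linearly independent. (Alternatively, one may invoke Lemma \ref{lem:star} with $k=1$ together with the fact that a simple graph has a basis avoiding any prescribed single edge when $n\geq 2$.) Hence $\dim R^1_{F_\Gamma}=n$ and $x_1,\ldots,x_n$ is a basis.

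Putting these together: Theorem \ref{The Hessian criterion} applies with $F=F_\Gamma$, $\bm{a}\in(\R_{>0})^n$, giving that $L_{\bm{a}}=a_1x_1+\cdots+a_nx_n$ satisfies the strong Lefschetz property at degree one, i.e. $\times L_{\bm{a}}^{r-2}\colon R^1_\Gamma\to R^{r-1}_\Gamma$ is an isomorphism. This finishes the proof.

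I do not expect any genuine obstacle here; the work was all done in establishing Theorem \ref{mainresult}. The only mildly delicate bookkeeping point is the basis claim for $x_1,\ldots,x_n$, which is where simplicity of $\Gamma$ is genuinely used (as the Remark following Remark \ref{rem:Hodge--Riemann} shows, $x_1,\ldots,x_n$ need not be independent for a general homogeneous $F$, and for a non-simple matroid $\det H_{F_M}\equiv 0$). If one prefers to avoid re-deriving independence, one can note that $R^1_{F_\Gamma}$ is spanned by $x_1,\ldots,x_n$ by construction and that the non-vanishing of $\det H_{F_\Gamma}$ at some point already forces these $n$ elements to be linearly independent, since a linear dependence among $x_1,\ldots,x_n$ in $R^1_{F_\Gamma}$ is equivalent to a linear dependence among the rows of $H_{F_\Gamma}$, making $\det H_{F_\Gamma}$ identically zero.
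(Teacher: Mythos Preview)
Your proposal is correct and follows essentially the same route as the paper: the paper simply says ``By our main result Theorem \ref{mainresult}, we have the following'' and leaves the reader to apply the Hessian criterion (Theorem \ref{The Hessian criterion}), which is exactly what you do. Your careful treatment of the basis condition on $x_1,\ldots,x_n$ is more explicit than the paper's, and your alternative argument---that non-vanishing of $\det H_{F_\Gamma}$ at some point already forces linear independence of $x_1,\ldots,x_n$ in $R^1_{F_\Gamma}$---is in fact the one the paper invokes later in Remark \ref{rem:HW}.
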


\begin{theorem}[The {Hodge--Riemann relation of $R_\Gamma^*$ at degree one}]\label{cor:HRRgraph}\hspace{2pt}\\
In the above setting, for any $\bm{a}\in(\R_{>0})^n$, the Hodge--Riemann bilinear form ${Q_{L_{\bm{a}}}^1}$ is non-degenerate, {and ${Q_{L_{\bm{a}}}^1}$ has $n-1$ negative eigenvalues and one positive eigenvalue, i.e., the Hodge--Riemann relation holds.} 
\end{theorem}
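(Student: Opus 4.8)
The plan is to derive Theorem~\ref{cor:HRRgraph} as a direct consequence of the main result (Theorem~\ref{mainresult}) together with the dictionary between the Hodge--Riemann bilinear form and the Hessian recorded in Remark~\ref{rem:Hodge--Riemann}. The first step is to verify that $x_1, \ldots, x_n$ do in fact form a basis of $R_\Gamma^1$: for a simple graph every edge $e$ lies in some spanning tree, so $\partial F_\Gamma/\partial x_e \neq 0$, and moreover there is no linear relation $\sum c_i \partial F_\Gamma/\partial x_i = 0$ because the monomials appearing in distinct $\partial F_\Gamma/\partial x_i$ cannot cancel — concretely, pick any $2$-edge path through $e$ extending to a spanning tree to isolate $x_e$. (Equivalently, since $\Gamma$ is simple it is loopless and has no parallel edges, so the argument of the Remark following Theorem~\ref{thm:AGVs} does not obstruct anything.) Once $x_1, \ldots, x_n$ is a basis, Remark~\ref{rem:Hodge--Riemann} identifies the Gram matrix of $Q_{L_{\bm a}}^1$ in this basis with $(r-2)!\, H_{F_\Gamma}|_{\bm x = \bm a}$.

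With that identification in hand, the conclusion is immediate: by Theorem~\ref{mainresult}, for any $\bm a \in (\R_{>0})^n$ the Hessian $H_{F_\Gamma}|_{\bm x = \bm a}$ is non-degenerate with exactly $n-1$ negative eigenvalues and exactly one positive eigenvalue. Multiplying by the positive scalar $(r-2)!$ does not change the signature, so $Q_{L_{\bm a}}^1$ is non-degenerate with $n-1$ negative and one positive eigenvalue. Finally, to conclude ``the Hodge--Riemann relation holds'' in the sense of Definition~\ref{HRR}, I would invoke the first Remark after Definition~\ref{HRR}: since $F_\Gamma$ has positive coefficients, $F_\Gamma(\bm a) > 0$, and that remark says precisely that when $F(\bm a) > 0$, $L_{\bm a}$ satisfies the Hodge--Riemann relation at $R^1_F$ if and only if $Q_{L_{\bm a}}^1$ is non-degenerate with exactly one positive eigenvalue — which we have just established.

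There is essentially no serious obstacle here; the theorem is a corollary packaging Theorem~\ref{mainresult} into the algebraic language. The only point requiring a little care — and the place I would spend a sentence or two — is the claim that $x_1,\dots,x_n$ are linearly independent in $R^1_\Gamma$, since the paper explicitly warns (in the Remark with the example $F = x_1x_2 + x_1x_3 + 4x_1x_4 + \cdots$) that this can fail for a general multi-affine quadratic-type polynomial and that independence is exactly what makes the Hessian criterion applicable. For graphic matroids of simple graphs this independence is guaranteed, and in fact it also follows a posteriori from non-degeneracy of $H_{F_\Gamma}$: if some $\sum c_i x_i = 0$ in $R^1_\Gamma$ then $H_{F_\Gamma}\bm c = 0$, contradicting Theorem~\ref{mainresult}. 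So one may even skip the direct combinatorial check and deduce the basis property from the main theorem itself, then read off the signature.

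In summary, the proof is: (1) observe $F_\Gamma(\bm a) > 0$ for $\bm a \in (\R_{>0})^n$ since all coefficients are positive; (2) by Theorem~\ref{mainresult}, $H_{F_\Gamma}|_{\bm x = \bm a}$ is non-degenerate, in particular no nonzero $\bm c$ has $H_{F_\Gamma}\bm c = 0$, so $x_1, \ldots, x_n$ is a basis of $R^1_\Gamma$; (3) by Remark~\ref{rem:Hodge--Riemann}, $Q_{L_{\bm a}}^1$ has Gram matrix $(r-2)!\,H_{F_\Gamma}|_{\bm x = \bm a}$, hence is non-degenerate with $n-1$ negative and one positive eigenvalue; (4) since $F_\Gamma(\bm a) > 0$, the Remark after Definition~\ref{HRR} upgrades this to the statement that the Hodge--Riemann relation holds at $R^1_\Gamma$.
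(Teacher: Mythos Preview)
Your proposal is correct and takes essentially the same approach as the paper, which simply states Theorems~\ref{cor:SLPgraph} and~\ref{cor:HRRgraph} as immediate consequences of Theorem~\ref{mainresult} without spelling out any details. Your argument fills in precisely the steps the paper leaves implicit --- the basis property of $x_1,\dots,x_n$ in $R_\Gamma^1$ (which, as you note, follows a posteriori from non-degeneracy of $H_{F_\Gamma}$), the identification of $Q_{L_{\bm a}}^1$ with $(r-2)!\,H_{F_\Gamma}|_{\bm x=\bm a}$ via Remark~\ref{rem:Hodge--Riemann}, and the use of the Remark after Definition~\ref{HRR} to pass from the signature statement to the Hodge--Riemann relation.
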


Our result is not followed from related known results in \cite{MR3733101} {as the following}. 
\begin{remark}\label{rem:HW}
In \cite{MR3566530}, Maeno and Numata introduce an Artinian (generally non-Gorenstein) graded $k$-algebra $k[x_1, \ldots, x_n]/J_M$ associated to each matroid $M$, where $J_M$ is a certain ideal such that $J_M\subseteq\Ann F_M$. 
Then, they show that the strong Lefschetz property of $x_1+\cdots+x_n$ at every degree when $M=M(q, n)$ is the projective space over a finite field $\F_q$, in this case, $J_M=\Ann F_M$ (see \cite[Example 2.3 \& Theorem 4.3 (2)]{MR3566530}). 
In \cite{MR3733101}, Huh and Wang denote this ring by $B^*(M)={\bigoplus_{\ell=0}^r{B^{\ell}(M)}}$, and they study this ring associated to each general simple matroid. 
They show that this satisfies the ``injective'' Lefschetz property when $M$ is a representable matroid $M$, i.e., for any $0\leq \ell\leq \lfloor \fr<r/2>\rfloor$, 
the multiplication map $\times L^{r-2\ell} : B^\ell(M) \to B^{r-\ell}(M)$ of the element $L:=x_1+\cdots+x_n$ is injective, where $M$ is a representable matroid on $[n]$ of $\rank M=r$. 
Since we have the natural surjection $B^*(M)\twoheadrightarrow R^*_M$, we have the following commutative diagram: 
\[\begin{tikzcd}
B^1(M)\ar[d, twoheadrightarrow]\ar[r, hook, "{\times L}^{r-2}"]&B^{r-1}(M)\ar[d, twoheadrightarrow]\\
R^1_M\ar[r, "\times L^{r-2}"]&R^{r-1}_M
\end{tikzcd}\] 
This diagram would not imply our Corollary \ref{cor:SLPgraph}, that is, $\times L^{r-2} : R^1_M \to R^{r-1}_M$ is an isomorphism in the graphic case $M=M_\Gamma$. 
On the other hand, by the non-degeneracy of the Hessian of $F_{\Gamma}$, 
in particular, we know that $\{x_1, \ldots, x_n\}$ is a basis of $R^1_M$. 
Since $x_1, \ldots, x_n$ is a basis of $B^1(M)$ by the definition of $B^*(M)$, this implies that $B^1(M)=R^1_M$, and the Hodge--Riemann bilinear form at $B^1(M)$ and $R^1_M$ are the same. 
Thus Corollary \ref{cor:HRRgraph} implies the Hodge--Riemann bilinear form at $B^1(M_\Gamma)$ with respect to $L_{\bm{a}}$ is non-degenerate, moreover it has $n-1$ negative eigenvalues and one positive eigenvalue.   
\end{remark}

%%%%%%%%%%%%%%%%%%%%%%%%%%%%%%%%%%%%%%%%%%%%%%%%%%%%%
\subsection{The strong Lefschetz Property of elementary symmetric functions}\label{sec:app2}

Here we show that for any simple graph $\Gamma$ with $r$ vertices and $n$ edges, $1\leq \ell\leq r-2$, and $\bm{a}\in(\R_{>0})^n$, $(\partial_{\bm{a}})^\ell F_\Gamma$ is strictly homogeneous log-concave at $\bm{a}$, in particular strictly log-concave at $\bm{a}$. 
As an application, we prove that for elementary symmetric functions $e_{n-\ell}(x_1, \ldots, x_n)$ ($0\leq \ell\leq n-2$), $x_1+\cdots+x_n$ satisfies the strong Lefschetz property at degree one in $R_{e_{n-\ell}}^*$. 
  
First, we note the following general property of (strictly) homogeneous log-concave polynomial. 
In the proof, we use essentially the assumption of (strict) ``homogeneous'' log-concavity. 
Note that (strictly) log-concave does not imply Lemma \ref{lem:appa}.  

\begin{lemma}\label{lem:appa}
Let $F\in\R[x_1,\ldots,x_n]$ be a homogeneous polynomial of deg $F=r\geq3$. For any $\bm{a}\in\R^n$, the following are equivalent: 
\begin{itemize}
\item[(i)] $F$ is homogeneous log-concave (resp.\ strictly homogeneous log-concave) at $\bm{a}$, i.e., for any $s\geq\fr<r-1/r>$ (resp.\ $s>\fr<r-1/r>$),  
\[(-F H_F+s(\nabla F)^T(\nabla F))|_{\bm{x}=\bm{a}}\succeq 0 \ (\text{resp.\ } \ \succ 0).\]
\item[(ii)] $\partial_{\bm{a}}F:=\displaystyle\sum_{i=1}^n{a_i\fr<\partial F/\partial x_i>}$ is homogeneous log-concave (resp.\ strictly homogeneous log-concave) at $\bm{a}$, i.e., for any $s'\geq\fr<r-2/r-1>$ (resp.\ $s'>\fr<r-2/r-1>$),  
\[(-(\partial_{\bm{a}}F) H_{\partial_{\bm{a}}F}+s'(\nabla {\partial_{\bm{a}}F})^T(\nabla {\partial_{\bm{a}}F}))|_{\bm{x}=\bm{a}}\succeq 0 \ (\text{resp.\ } \ \succ 0).\] 
\end{itemize}
\end{lemma}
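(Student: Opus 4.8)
The plan is to prove the equivalence by a direct computation relating the Hessian quadratic form of $\partial_{\bm a}F$ at $\bm a$ to that of $F$ at $\bm a$, exploiting homogeneity heavily (this is why the ``homogeneous'' log-concavity hypothesis, rather than plain log-concavity, is essential). Write $G:=\partial_{\bm a}F=\sum_i a_i\partial F/\partial x_i$, a homogeneous polynomial of degree $r-1$. Euler's identity (Lemma \ref{lem:hom2}) applied to $F$ gives two facts evaluated along $\bm x=\bm a$: first $\nabla F\cdot\bm a = rF$ (scalar Euler), and second $H_F\bm a = (r-1)(\nabla F)^T$. From the latter we read off that $\nabla G = \bm a^T H_F$ as a row vector (since $G=\bm a^T\nabla F^T$ has gradient $\bm a^T H_F$), so $\nabla G|_{\bm x=\bm a}= (r-1)\nabla F|_{\bm x=\bm a}$; and $G(\bm a)=\nabla F(\bm a)\cdot\bm a = rF(\bm a)$. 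Finally $H_G = $ the derivative of $\bm a^T H_F$, i.e.\ $H_G$ has $(i,j)$-entry $\sum_k a_k \partial^3 F/\partial x_i\partial x_j\partial x_k$; one more application of Euler's identity to the homogeneous-of-degree-$(r-2)$ functions $\partial F/\partial x_i$ gives $H_G\bm a = (r-2)(\nabla G)^T$, and more usefully $\bm a^T H_G = (r-2)\nabla G = (r-1)(r-2)\nabla F$, all at $\bm x=\bm a$.

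Next I would substitute these identities into the quadratic form defining homogeneous log-concavity of $G$. For an arbitrary vector $\bm y\in\R^n$, decompose $\bm y = t\bm a + \bm z$; but it is cleaner to just expand $\bm y^T(-GH_G + s'(\nabla G)^T\nabla G)\bm y$ directly using the expressions above for $G(\bm a)$, $\nabla G|_{\bm a}$ in terms of $F(\bm a),\nabla F|_{\bm a}$, and then to relate $\bm y^T H_G\bm y$ to the Hessian data of $F$. The key linear-algebra step is that $H_G|_{\bm a}$, viewed as a symmetric bilinear form, is $H_F|_{\bm a}$ contracted once with $\bm a$ in the third slot — so its quadratic form is a directional derivative of the quadratic form $\bm y^T H_F \bm y$. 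Plugging in and using $G=rF$, $\nabla G = (r-1)\nabla F$ (all at $\bm a$), the matrix $-GH_G+s'(\nabla G)^T\nabla G$ at $\bm a$ should turn out to be a positive-scalar multiple of $-FH_F + s(\nabla F)^T\nabla F$ at $\bm a$ after the change of parameter $s' \leftrightarrow s$, modulo a rank-one correction supported on $\bm a$ that is controlled by the scalar Euler identities; the correspondence of thresholds is exactly $s'=\frac{r-2}{r-1}\Longleftrightarrow s=\frac{r-1}{r}$, since both are the ``critical'' value $1-\frac1{\deg}$. This is consistent with Proposition \ref{identity1}, which already tells us that the determinant of $-FH_F+s(\nabla F)^T\nabla F$ vanishes precisely at $s=\frac{r-1}{r}$, so the two one-parameter families of matrices degenerate at matched endpoints.

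Concretely, I would proceed as follows. Step 1: record the four Euler-type identities above for $F$ and for $G$ at $\bm x=\bm a$. Step 2: compute $-GH_G+s'(\nabla G)^T\nabla G\big|_{\bm a}$ and show it equals $c\big(-FH_F+s(\nabla F)^T\nabla F\big)\big|_{\bm a} + (\text{rank-one term along }\bm a)$ for an appropriate positive constant $c=c(r)$ and an affine bijection $s\mapsto s'$ sending $\frac{r-1}{r}$ to $\frac{r-2}{r-1}$; here the rank-one term arises because $H_G$ is a contraction of the third-order tensor of $F$, whose symmetrization forces a correction proportional to $(\nabla F)^T\bm a^T + \bm a(\nabla F) $ type terms, which collapse via scalar Euler to multiples of $(\nabla F)^T\nabla F$ and $FH_F$ — i.e.\ they get absorbed. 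Step 3: conclude that positive (semi)definiteness of one family for all admissible parameters is equivalent to that of the other, which is precisely (i)$\Leftrightarrow$(ii).

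The main obstacle I anticipate is Step 2: the appearance of the symmetrized third-derivative tensor $\partial^3 F/\partial x_i\partial x_j\partial x_k$ contracted with $\bm a$ does \emph{not} on its own reduce to Hessian data of $F$, so the identity cannot be a pointwise matrix identity valid for all $\bm x$ — it must be special to evaluation at $\bm x=\bm a$ with the \emph{same} $\bm a$ used in $\partial_{\bm a}$. The trick is that $\partial_{\bm a}(\bm y^T H_F\bm y)$, when further evaluated at $\bm x=\bm a$, is governed by homogeneity: $\bm y^T H_F(\bm x)\bm y$ is homogeneous of degree $r-2$ in $\bm x$, so its directional derivative along $\bm a$ at the point $\bm a$ equals $(r-2)\,\bm y^T H_F(\bm a)\bm y$. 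That single observation is what makes the rank-one correction manageable and turns Step 2 from a tensor identity into a scalar bookkeeping exercise; getting the constant $c(r)$ and the parameter map exactly right is then routine but must be done carefully so that the critical thresholds line up. Once that is in hand, the equivalence for both ``$\succeq 0$'' and ``$\succ 0$'' follows simultaneously, since multiplying a positive (semi)definite matrix by a positive scalar and adding a positive-semidefinite rank-one term preserves (and, in the strict case, is reversible up to) definiteness.
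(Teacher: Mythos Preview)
Your approach is essentially the paper's, but you are making it harder than necessary. The observation you flag at the end --- that $\bm y^T H_F(\bm x)\bm y$ is homogeneous of degree $r-2$ in $\bm x$, so its $\partial_{\bm a}$-derivative evaluated at $\bm a$ equals $(r-2)\,\bm y^T H_F(\bm a)\bm y$ --- is not merely a tool for ``controlling a rank-one correction''; it \emph{is} the third Euler identity $H_G|_{\bm x=\bm a}=(r-2)H_F|_{\bm x=\bm a}$ as matrices. Combined with $G(\bm a)=rF(\bm a)$ and $\nabla G|_{\bm a}=(r-1)\nabla F|_{\bm a}$, this gives an exact matrix identity with no leftover term:
\[
\bigl(-FH_F+s(\nabla F)^T\nabla F\bigr)\big|_{\bm a}
=\frac{1}{r(r-2)}\Bigl(-GH_G+s'\,(\nabla G)^T\nabla G\Bigr)\big|_{\bm a},\qquad s'=\frac{r(r-2)}{(r-1)^2}\,s.
\]
The parameter map is linear with positive slope and sends $\frac{r-1}{r}$ to $\frac{r-2}{r-1}$, so the equivalence of (i) and (ii) (in both the $\succeq$ and $\succ$ versions) is immediate. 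Your Step~2 ``rank-one term along $\bm a$'' never appears; once you apply your own final observation, everything collapses and the paper's two-line computation is all that remains.
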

\begin{proof}
Assume that $\partial_{\bm{a}}F:=\sum_i{a_i\fr<\partial F/\partial x_i>}$ is strictly homogeneous log-concave at $\bm{a}$. By Euler's identity, we have the following identities. 
\[rF(\bm{a})=(\partial_{\bm{a}}F)(\bm{a}),\]
\[(r-1)\nabla F|_{\bm{x}=\bm{a}}=\nabla(\partial_{\bm{a}}F)|_{\bm{x}=\bm{a}},\]
\[(r-2)H_F|_{\bm{x}=\bm{a}}=H_{\partial_{\bm{a}}F}|_{\bm{x}=\bm{a}}.\]
Then, we have 
\begin{align*}
(-F H_F+s&(\nabla F)^T(\nabla F))|_{\bm{x}=\bm{a}} \\
&=\fr<1/r(r-2)>\left.\left\{-(\partial_{\bm{a}}F)H_{\partial_{\bm{a}}F}+s\cdot\fr<r(r-2)/(r-1)^2>(\nabla\partial_{\bm{a}}F)^T(\nabla\partial_{\bm{a}}F) \right\}\right|_{\bm{x}=\bm{a}}.
\end{align*}
If we set 
\begin{align*}
s'=s\cdot\fr<r(r-2)/(r-1)^2>, 
\end{align*}
then by some easy computations, we have
$s>\fr<r-1/r>$ if and only if $s'>\fr<r-2/r-1>$. 
This implies the equivalence of (i) and (ii). 
\end{proof}
\begin{corollary}
Let $F\in\R[x_1,\ldots,x_n]$ be a homogeneous polynomial of deg $F=r\geq3$. 
If $F$ is strictly homogeneous log-concave at $\bm{a}\in\R^n$, then for any $0\leq \ell\leq r-2$, $\partial_{\bm{a}}^\ell F:=(\partial_{\bm{a}})^\ell F$ is also strictly homogeneous log-concave at $\bm{a}\in\R^n$.
\end{corollary}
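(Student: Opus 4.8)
The plan is to prove the corollary by induction on $\ell$, using Lemma~\ref{lem:appa} as the single inductive step. The base case $\ell=0$ is precisely the hypothesis that $F$ is strictly homogeneous log-concave at $\bm{a}$. For the inductive step, I would assume that $G:=\partial_{\bm{a}}^{\ell}F$ is strictly homogeneous log-concave at $\bm{a}$ for some $\ell$ with $0\leq\ell\leq r-3$. Because $F$ is homogeneous of degree $r$ and each application of $\partial_{\bm{a}}$ lowers the degree by one, $G$ is homogeneous of degree $r-\ell\geq 3$, so Lemma~\ref{lem:appa} applies to $G$. The implication (i)$\Rightarrow$(ii) of that lemma then says exactly that $\partial_{\bm{a}}G=\partial_{\bm{a}}^{\ell+1}F$ is strictly homogeneous log-concave at $\bm{a}$. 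Iterating from $\ell=0$ up to $\ell=r-3$ yields strict homogeneous log-concavity of $\partial_{\bm{a}}^{\ell}F$ for every $\ell$ with $1\leq\ell\leq r-2$, which together with the base case covers the full range $0\leq\ell\leq r-2$.

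The only point to keep track of is the degree: Lemma~\ref{lem:appa} is stated for a homogeneous polynomial of degree $\geq 3$ and concludes something about a polynomial of degree $\geq 2$, with the threshold in the definition of homogeneous log-concavity shifting from $\frac{r-1}{r}$ to $\frac{r-2}{r-1}$. Since the lemma is invoked only on $G=\partial_{\bm{a}}^{\ell}F$ with $\ell\leq r-3$, i.e.\ $\deg G\geq 3$, we always stay inside its hypotheses; and the threshold shift is already built into the definition (the cutoff is always $\frac{\deg F-1}{\deg F}$), so it needs no separate handling. In particular, the last polynomial produced, $\partial_{\bm{a}}^{r-2}F$, is quadratic, and ``strictly homogeneous log-concave at $\bm{a}$'' is read in the sense of the definition with $r$ replaced by $2$.

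I do not expect a genuine obstacle here: all the analytic content has already been extracted in Lemma~\ref{lem:appa} --- namely the Euler-identity relations $rF(\bm{a})=(\partial_{\bm{a}}F)(\bm{a})$, $(r-1)\nabla F|_{\bm{x}=\bm{a}}=\nabla(\partial_{\bm{a}}F)|_{\bm{x}=\bm{a}}$, $(r-2)H_F|_{\bm{x}=\bm{a}}=H_{\partial_{\bm{a}}F}|_{\bm{x}=\bm{a}}$, and the reparametrisation $s'=s\cdot\frac{r(r-2)}{(r-1)^2}$ matching the two thresholds. Consequently the corollary reduces to a short induction once Lemma~\ref{lem:appa} is available, and the write-up should be only a few lines.
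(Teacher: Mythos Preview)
Your proposal is correct and matches the paper's approach: the paper states this corollary immediately after Lemma~\ref{lem:appa} with no proof, since it is the obvious induction you describe. Your care about the degree hypothesis (applying the lemma only when $\deg G\geq 3$, i.e.\ for $\ell\leq r-3$) is exactly right, and nothing further is needed.
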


Then, by Corollary \ref{cor:eigen}, we have the following corollary. 

\begin{corollary}\label{cor:appb}
For any simple graph $\Gamma=(V, E)$ with $|V|=r+1\geq3$ and $|E|=n\geq3$, and any $\bm{a}\in(\R_{>0})^n$, 
$\partial_{\bm{a}}^\ell F_\Gamma$ is strictly homogeneous log-concave at $\bm{a}$, where $F_\Gamma$ is the Kirchhoff polynomial of $\Gamma$. 
In particular, $(-1)^{n-\ell-1}\det H_{\partial_{\bm{a}}^\ell F_\Gamma}|_{\bm{x}=\bm{a}}>0$, and $a_1x_1+\cdots+a_nx_n\in R_{\partial_{\bm{a}}^\ell F_\Gamma}^1$ satisfies the strong Lefschetz property at degree one.   
\end{corollary}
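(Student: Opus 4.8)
The plan is to deduce the corollary from the main theorem, the propagation of strict homogeneous log-concavity under directional differentiation (Lemma~\ref{lem:appa} and the corollary following it), and the Hessian criterion for the strong Lefschetz property. First, since $\Gamma$ is a connected simple graph with $r+1\geq 3$ vertices and $n\geq 3$ edges, Theorem~\ref{mainresult} gives that the Kirchhoff polynomial $F_\Gamma$ is strictly homogeneous log-concave at every $\bm{a}\in(\R_{>0})^n$. When $r\geq 3$ I would apply Lemma~\ref{lem:appa} iteratively --- equivalently, the corollary just preceding this one --- to $F=F_\Gamma$, which has degree $r\geq 3$, to conclude that $\partial_{\bm{a}}^\ell F_\Gamma$ is strictly homogeneous log-concave at $\bm{a}$ for every $\ell$ in the admissible range $0\leq\ell\leq r-2$; the boundary case $\ell=0$, together with the degenerate case $r=2$ (where $\Gamma=K_3$ forces $\ell=0$), is Theorem~\ref{mainresult} itself.

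Next I would verify the hypotheses needed to apply Corollary~\ref{cor:eigen} to $G:=\partial_{\bm{a}}^\ell F_\Gamma$. It is a homogeneous multi-affine polynomial of degree $r-\ell\geq 2$ in the same $n$ variables, multi-affineness and positivity of its nonzero coefficients being inherited from $F_\Gamma$ (which is multi-affine with coefficients in $\{0,1\}$) because $a_i>0$; moreover, iterating Euler's identity (Lemma~\ref{lem:hom2}) gives $G(\bm{a})=\frac{r!}{(r-\ell)!}\,F_\Gamma(\bm{a})>0$ since $\Gamma$ is connected. Hence Corollary~\ref{cor:eigen} applies to $G$ and shows that $H_{G}|_{\bm{x}=\bm{a}}$ has exactly one positive eigenvalue and $n-1$ negative ones; in particular it is non-degenerate, which determines both the non-vanishing and the sign of $(\det H_{G})|_{\bm{x}=\bm{a}}$.

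Finally, for the strong Lefschetz statement I would invoke the Hessian criterion (Theorem~\ref{The Hessian criterion}) with the polynomial $G=\partial_{\bm{a}}^\ell F_\Gamma$ and the point $\bm{a}$. Non-degeneracy of $H_{G}|_{\bm{x}=\bm{a}}$ forces $\det H_{G}\not\equiv 0$ as a polynomial, so the partials $\partial G/\partial x_1,\ldots,\partial G/\partial x_n$ are linearly independent and $x_1,\ldots,x_n$ form a basis of $R_{G}^1$; combined with $G(\bm{a})\neq 0$ and $\det H_{G}|_{\bm{x}=\bm{a}}\neq 0$, the criterion yields that $L_{\bm{a}}=a_1x_1+\cdots+a_nx_n$ satisfies the strong Lefschetz property at degree one in $R_{G}^*$. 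I do not expect a genuine obstacle here, since all of the analytic content is already in hand; the only care required is bookkeeping --- staying in the range $0\leq\ell\leq r-2$ on which strict homogeneous log-concavity is defined, and checking that positivity at $\bm{a}$ and multi-affineness transfer from $F_\Gamma$ to each iterated directional derivative before Corollary~\ref{cor:eigen} can be applied.
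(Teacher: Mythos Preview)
Your proposal is correct and follows essentially the same route as the paper, which dispatches the corollary in one line (``Then, by Corollary~\ref{cor:eigen}, we have the following corollary'') after establishing the preceding corollary from Lemma~\ref{lem:appa}. You are in fact more careful than the paper in verifying the hypotheses of Corollary~\ref{cor:eigen} (positive nonzero coefficients, multi-affineness, $G(\bm{a})>0$ via Euler) and of the Hessian criterion (linear independence of $x_1,\dots,x_n$ in $R_G^1$), all of which the paper leaves implicit.
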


Let $e_{n-\ell}=e_{n-\ell}(x_1, \ldots, x_n)$ be the $(n-\ell)$-th elementary symmetric polynomial in $n$ variables. 
Then one can easily show the following identity:
\begin{equation*}
e_{n-\ell}(x_1, \ldots ,x_n)=\ell!\partial_{\bm{a}}^\ell e_n(x_1, \ldots ,x_n),
\end{equation*}
where $\bm{a}=(1,1, \ldots ,1)^T$. Since $e_n(x_1, \ldots, x_n)=x_1\cdots x_n$ is the Kirchhoff polynomial of a tree with $n+1$ vertices, we have the following by Corollary \ref{cor:appb}. 
\begin{corollary}
For the elementary symmetric polynomial $e_{n-\ell}=e_{n-\ell}(x_1, \ldots, x_n)$ $(0\leq \ell\leq n-2)$, the element $x_1+\cdots+x_n\in R^1_{e_{n-\ell}}$ satisfies the strong Lefschetz property at degree one. 
\end{corollary}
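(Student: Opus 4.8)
The plan is to recognize $e_n$ as a Kirchhoff polynomial and then transport the conclusion of Corollary \ref{cor:appb}. First I would fix a tree $\Gamma$ on $n+1$ vertices with $n$ edges, labelled $1, \ldots, n$ (for instance a path, or a star). Such a $\Gamma$ is connected and simple, and its unique spanning tree is $\Gamma$ itself, so by Definition \ref{Kirchhoff polynomial} its Kirchhoff polynomial is $F_\Gamma = x_1\cdots x_n = e_n(x_1, \ldots, x_n)$. Note that the graphic matroid $M(\Gamma)$ has rank $r = |V|-1 = n$.

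Next I would record that, by the identity $e_{n-\ell} = \ell!\,\partial_{\bm{1}}^{\ell} e_n$ displayed above (with $\bm{1} = (1, \ldots, 1)^T$), the polynomial $e_{n-\ell}$ is a nonzero scalar multiple of $\partial_{\bm{1}}^{\ell} F_\Gamma = \partial_{\bm{1}}^{\ell} e_n$. Since multiplying a polynomial by a nonzero scalar does not change its annihilator, $\Ann(e_{n-\ell}) = \Ann(\partial_{\bm{1}}^{\ell} F_\Gamma)$, so the graded algebras $R^*_{e_{n-\ell}}$ and $R^*_{\partial_{\bm{1}}^{\ell} F_\Gamma}$ coincide, with $x_1 + \cdots + x_n$ corresponding to $L_{\bm{1}}$ on both sides.

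Now assume $n \geq 3$. Then $\Gamma$ satisfies the hypotheses of Corollary \ref{cor:appb} (namely $|V| = r+1 = n+1 \geq 3$ and $|E| = n \geq 3$), and $\bm{1} \in (\R_{>0})^n$. That corollary gives, for every $0 \leq \ell \leq r - 2 = n - 2$, that $L_{\bm{1}} = x_1 + \cdots + x_n \in R^1_{\partial_{\bm{1}}^{\ell} F_\Gamma}$ satisfies the strong Lefschetz property at degree one; by the previous paragraph this is exactly the assertion for $R^1_{e_{n-\ell}}$. The remaining small cases are degenerate: if $n \leq 1$ the index range $0 \leq \ell \leq n-2$ is empty, and if $n = 2$ the only index is $\ell = 0$, where $\deg e_2 = 2$ and the strong Lefschetz map at degree one is $\times L^0 = \mathrm{id}_{R^1}$, which is trivially bijective.

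I do not expect a genuine obstacle: the entire content is already in Corollary \ref{cor:appb}. The only points needing care are checking that a tree meets the hypotheses of that corollary — simplicity, $|V| = r+1$, and the coincidence of ranges (since $r = n$, the range $0 \le \ell \le r-2$ becomes $0 \le \ell \le n-2$, matching the statement) — together with the routine observation that replacing $\partial_{\bm{1}}^{\ell} e_n$ by its nonzero scalar multiple $e_{n-\ell}$ leaves the algebra $R^*$ and the Lefschetz element untouched.
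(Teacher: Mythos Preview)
Your proposal is correct and follows essentially the same route as the paper: recognize $e_n=x_1\cdots x_n$ as the Kirchhoff polynomial of a tree on $n+1$ vertices, use the identity $e_{n-\ell}=\ell!\,\partial_{\bm 1}^{\ell}e_n$, and invoke Corollary~\ref{cor:appb}. You are slightly more careful than the paper in two places --- noting explicitly that the scalar $\ell!$ leaves $\Ann$ and hence $R^*$ unchanged, and treating the degenerate cases $n\leq 2$ that fall outside the hypothesis $|E|\geq 3$ of Corollary~\ref{cor:appb} --- but these are cosmetic additions, not a different argument.
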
 

\begin{remark}
In \cite[Theorem 4.3]{MN32012}, Maeno and Numata showed that for the $e_{k}(x_1, \ldots, x_n)$, 
the element $x_1+\cdots+x_n$ satisfies the strong Lefschetz property at all degrees. 
They used the Hessian criterion (see Theorem \ref{The Hessian criterion}) 
and showed the non-degeneracy of the Hessian matrix of $e_k(x_1, \ldots, x_n)$ at $(x_1, \ldots, x_n)=(1, \ldots, 1)$ by the non-degeneracy of the Poincar\'{e} duality of some Gorenstein algebra.   
\end{remark}

\bibliographystyle{amsplain-url}
\bibliography{mrbib}

\providecommand{\bysame}{\leavevmode\hbox to3em{\hrulefill}\thinspace}
\providecommand{\MR}{\relax\ifhmode\unskip\space\fi MR }
% \MRhref is called by the amsart/book/proc definition of \MR.
\providecommand{\MRhref}[2]{%
  \href{http://www.ams.org/mathscinet-getitem?mr=#1}{#2}
}
\providecommand{\href}[2]{#2}
\begin{thebibliography}{10}

\bibitem{MR3862944}
Karim Adiprasito, June Huh, and Eric Katz, \emph{Hodge theory for combinatorial
  geometries}, Ann. of Math. (2) \textbf{188} (2018), no.~2, 381--452, URL
  \url{https://doi.org/10.4007/annals.2018.188.2.1}. \MR{3862944}

\bibitem{AGV2018}
Nima Anari, Shayan~Oveis Gharan, and Cynthia Vinzant, \emph{Log-concave
  polynomials i: Entropy and a deterministic approximation algorithm for
  counting bases of matroids}, arXiv:1807.00929, URL
  \url{https://arxiv.org/abs/1807.00929}.

\bibitem{BH2019}
Petter Br{\"a}nd\'{e}n and June Huh, \emph{Lorentzian polynomials},
  arXiv:1902.03719, URL \url{https://arxiv.org/abs/1902.03719}.

\bibitem{MR2037144}
Young-Bin Choe, James~G. Oxley, Alan~D. Sokal, and David~G. Wagner,
  \emph{Homogeneous multivariate polynomials with the half-plane property},
  Adv. in Appl. Math. \textbf{32} (2004), no.~1-2, 88--187, Special issue on
  the Tutte polynomial, URL
  \url{https://doi.org/10.1016/S0196-8858(03)00078-2}. \MR{2037144}

\bibitem{MR2431661}
Ciro Ciliberto, Francesco Russo, and Aron Simis, \emph{Homaloidal hypersurfaces
  and hypersurfaces with vanishing {H}essian}, Adv. Math. \textbf{218} (2008),
  no.~6, 1759--1805, URL \url{https://doi.org/10.1016/j.aim.2008.03.025}.
  \MR{2431661}

\bibitem{MR2683227}
Leonid Gurvits, \emph{On multivariate {N}ewton-like inequalities}, Advances in
  combinatorial mathematics, Springer, Berlin, 2009, pp.~61--78, URL
  \url{https://doi.org/10.1007/978-3-642-03562-3_4}. \MR{2683227}

\bibitem{MR2978290}
Roger~A. Horn and Charles~R. Johnson, \emph{Matrix analysis}, second ed.,
  Cambridge University Press, Cambridge, 2013. \MR{2978290}

\bibitem{MR3733101}
June Huh and Botong Wang, \emph{Enumeration of points, lines, planes, etc},
  Acta Math. \textbf{218} (2017), no.~2, 297--317, URL
  \url{https://doi.org/10.4310/ACTA.2017.v218.n2.a2}. \MR{3733101}

\bibitem{MR1944442}
Tatsuo Kimura, \emph{Introduction to prehomogeneous vector spaces},
  Translations of Mathematical Monographs, vol. 215, American Mathematical
  Society, Providence, RI, 2003, Translated from the 1998 Japanese original by
  Makoto Nagura and Tsuyoshi Niitani and revised by the author. \MR{1944442}

\bibitem{MN32012}
Toshiaki Maeno and Yasuhide Numata, \emph{On the sperner property and
  gorenstein algebras associated to matroids}, DMTCS Proceedings AR, 2012,
  pp.~157--168.

\bibitem{MR2985388}
\bysame, \emph{Sperner property, matroids and finite-dimensional {G}orenstein
  algebras}, Tropical geometry and integrable systems, Contemp. Math., vol.
  580, Amer. Math. Soc., Providence, RI, 2012, pp.~73--84, URL
  \url{https://doi.org/10.1090/conm/580/11496}. \MR{2985388}

\bibitem{MR3566530}
\bysame, \emph{Sperner property and finite-dimensional {G}orenstein algebras
  associated to matroids}, J. Commut. Algebra \textbf{8} (2016), no.~4,
  549--570, URL \url{https://doi.org/10.1216/JCA-2016-8-4-549}. \MR{3566530}

\bibitem{MR2594646}
Toshiaki Maeno and Junzo Watanabe, \emph{Lefschetz elements of {A}rtinian
  {G}orenstein algebras and {H}essians of homogeneous polynomials}, Illinois J.
  Math. \textbf{53} (2009), no.~2, 591--603, URL
  \url{http://projecteuclid.org/euclid.ijm/1266934795}. \MR{2594646}

\bibitem{MR2849819}
James Oxley, \emph{Matroid theory}, second ed., Oxford Graduate Texts in
  Mathematics, vol.~21, Oxford University Press, Oxford, 2011, URL
  \url{https://doi.org/10.1093/acprof:oso/9780198566946.001.0001}. \MR{2849819}

\bibitem{MR0430336}
M.~Sato and T.~Kimura, \emph{A classification of irreducible prehomogeneous
  vector spaces and their relative invariants}, Nagoya Math. J. \textbf{65}
  (1977), 1--155, URL \url{http://projecteuclid.org/euclid.nmj/1118796150}.
  \MR{0430336}

\bibitem{MR1813436}
W.~T. Tutte, \emph{Graph theory}, Encyclopedia of Mathematics and its
  Applications, vol.~21, Cambridge University Press, Cambridge, 2001, With a
  foreword by Crispin St. J. A. Nash-Williams, Reprint of the 1984 original.
  \MR{1813436}

\bibitem{W2000}
Junzo Watanabe, \emph{A remark on the {H}essian of homogeneous polynomials},
  The curves seminar at {Q}ueen's, vol.\ {XIII}, Queen's Papers in Pure and
  Appl. Math., vol. 119, Queen's Univ., Kingston, ON, 2000, pp.~171--178.

\bibitem{Y2018}
Akiko Yazawa, \emph{The {H}essians of the complete and complete bipartite
  graphs and its application to the strong {L}efschetz property},
  arXiv:1812.07199, URL \url{https://arxiv.org/abs/1812.07199}.

\end{thebibliography}

\end{document}